\newcommand{\Bord}{\mathrm{Bord}}
\newcommand{\Cob}{\mathrm{Cob}}
\newcommand{\Vect}{\mathrm{Vect}}
\DeclareMathOperator{\Hom}{Hom}  
\DeclareMathOperator{\End}{End}  
\DeclareMathOperator{\Map}{Map}  
\DeclareMathOperator{\hofib}{hofib}  
\DeclareMathOperator{\coker}{coker}
\theoremstyle{plain} 
\newtheorem{theorem}{Theorem}[section]
\newtheorem*{theorem*}{Theorem}
\newtheorem{lemma}[theorem]{Lemma}
\newtheorem{corollary}[theorem]{Corollary} 
\newtheorem*{corollary*}{Corollary}          
\newtheorem{proposition}[theorem]{Proposition}              
\theoremstyle{definition} 
\newtheorem{definition}[theorem]{Definition}
\newtheorem*{IHOP*}{Induction Hypothesis}
\theoremstyle{remark}  
\newtheorem{remark}[theorem]{Remark}
\newtheorem{example}[theorem]{Example}
\newtheorem{conjecture}[theorem]{Conjecture}
\def\cC{\mathcal C}
\def\cZ{\mathcal Z}
\def\C{\mathbb C}
\def\R{\mathbb R}
\def\Z{\mathbb Z}
\definecolor{CSPcolor}{rgb}{0.0,0.5,0.75}	
\begin{document}

\title[Tori Detect Invertibility of Topological Field Theories]{Tori Detect Invertibility of \\ Topological Field Theories}
\author[C. Schommer-Pries]{Christopher J. Schommer-Pries}
\address{Max Planck Institute for Mathematics, Bonn}
\email{schommerpries.chris@gmail.com}

\begin{abstract}
    A once-extended $d$-dimensional topological field theory $\cZ$ is a symmetric monoidal functor (taking values in a chosen target symmetric monoidal $(\infty,2)$-category) assigning values to $(d-2)$-manifolds, $(d-1)$-manifolds, and $d$-manifolds. We show that if $\cZ$ is at least once-extended and the value assigned to the $(d-1)$-torus is invertible, then the entire topological field theory is invertible, that is it factors through the maximal Picard $\infty$-category of the target. Similar results are shown to hold in the presence of arbitrary tangential structures. 
\end{abstract}

\maketitle

\setcounter{tocdepth}{2}
\tableofcontents

\section{Introduction}

\subsection{Summary of results}
A topological field theory, following the Atiyah-Segal axiomatization \cite{a88-tqft,segal}, is a symmetric monoidal functor
\begin{equation*}
	\cZ:\Cob_d \to \Vect,
\end{equation*}
where the source is the symmetric monoidal category $\Cob_d$ whose objects are closed compact $(d-1)$-dimensional manifolds, morphisms are equivalences classes of $d$-dimensional bordisms between these, the monoidal structure is given by the disjoint union of manifolds, and where the target is the category of vectors spaces with its standard tensor product monoidal structure.

A topological field theory associates a vector space $\cZ(M)$ to each closed $(d-1)$-manifold $M$ and the dimensions of these vector spaces form a very course measure of the complexity of the topological field theory. The simplest theories, the \emph{invertible field theories}, assign one-dimensional vector spaces to every $(d-1)$-dimensional manifold\footnote{If a topological field theory assigns one-dimensional (i.e. $\otimes$-invertible) vector spaces to every $(d-1)$-manifold, then it automatically assigns invertible linear maps to every $d$-dimensional bordism. Thus every manifold is assigned an invertible value. This explains the name \emph{invertible field theory}.}. It is natural to ask if there are any constraints on the allowed values of these dimensions or,  in the same vein, if there are criteria which ensure a theory is invertible? Indeed, this very question was raised by Chao-Ming Jian on the mathematical discussion website MathOverflow  \cite{MO-218982}\footnote{This MO question was the start of our interest in this problem.}, where Jian asks whether a $d$-dimensional theory $\cZ$ which assigns one-dimensional vector spaces $\cZ(S^{d-1})$ and $\cZ(T^{d-1})$ to the sphere and torus\footnote{the torus $T^k = (S^1)^{\times k}$} must also assign one-dimensional vector spaces to all other $(d-1)$-manifolds?

A direct consequence of the results of this paper give a positive answer to Jian's question under the assumption that the topological field theory is at least \emph{once-extended}, meaning that it assigns data to $d$, $(d-1)$, and $(d-2)$-manifolds\footnote{For example it might associate linear categories to $(d-2)$-manifolds and functors to $(d-1)$-dimensional bordisms. A precise definition appear later.}. In this case the vector space $\cZ(T^{d-1})$ assigned to the $(d-1)$-torus is one-dimensional if and only if the theory assigns one-dimensional vector spaces to all closed $(d-1)$-manifolds. So we see that the invertibility of these topological field theories is completely determined by the dimension assigned to the single manifold $T^{d-1}$. 

As we will explain presently, there are many ways to generalize topological field theories beyond the original Atiyah-Segal framework. Our results apply in this generality. The first and simplest generalization is to allow the target category to vary. Thus we let the target be any symmetric monoidal category $\cC$.  Next we can allow our manifolds to be equipped with general \emph{tangential structures}. 
A type of tangential structure for $d$-dimensional manifolds is determined by a fixed fibration $\xi: X \to BO(d)$. Given such a fibration an $(X, \xi)$-structure for a $d$-manifold $M$ is a lift $\theta$:
\begin{center}
\begin{tikzpicture}
		\node (LB) at (0, 0) {$M$};
		\node (RT) at (2, 1.5) {$X$};
		\node (RB) at (2, 0) {$BO(d)$};
		\draw [->, dashed] (LB) -- node [above left] {$\theta$} (RT);
		\draw [->>] (RT) -- node [right] {$\xi$} (RB);
		\draw [->] (LB) -- node [below] {$\tau_M$} (RB);
\end{tikzpicture}
\end{center}
where $\tau_M$ is the classifying map of the tangent bundle of $M$\footnote{To make the map $\tau_M$ well-defined (and not just well-defined up to homotopy) we need to make additional choices, such as an embedding of $M$ into $\R^\infty$. We will suppress this durring the introduction.}\footnote{Typical examples include orientations ($X = BSO(d)$), spin structures ($X = BSpin(d)$), tangential framings ($X = EO(d)$), stable framings ($X = O/O(d)$), $G$-bundles ($X = BG \times BO(d)$), and many others.}. Tangential structures for lower dimensional manifolds are defined in the same way, by stabilizing the tangent bundle with enough trivial line bundles to make it $d$-dimensional. We will return to this with more detail in Section~\ref{sec:general_tangential}. 

 Evaluating a topological field theory on a closed $d$-manifold $W$ gives an invariant $\cZ(W) \in \End_{\mathcal{C}}(1)$  (in the case $\cC = \Vect$ this is a number), and these invariants were one of the original motivations for studying topological field theories.  
The invariants from topological field theories enjoy a degree of locality.
If we cut $W$ in a linear fashion along parallel codimension-one submanifolds we may view it as a composite of bordisms. Then the axioms ensure that we can recover the value of $\cZ$ on $W$ from the values on these smaller pieces; this is the algebraic fact that functors send composites of morphisms to composites of morphisms. 

Many topological field theories enjoy a higher degree of locality. In these theories we are allowed to cut our manifolds along non-parallel slices thereby cutting up our manifold into even simpler and smaller pieces. At the same time this introduces manifolds with corners. Algebraically this can be captured by Freed and Lawrence's notion of \emph{extended topological field theory} \cite{freed-higher,Lawrence}, a notion which has been extensively developed by Baez-Dolan, Lurie, and many others 
\cite{bd95-hda, Bartlett:2015aa, Douglas:2013aa, Feshbach:2011aa, kapustin, kl01, lurie-tft, Schommer-Pries:2011aa, segal2010locality, MR3351570}. 

Higher categories provide the core underlying algebraic structure governing extended field theories, and the strongest form of our results is cast in the language of symmetric monoidal $(\infty,n)$-categories. For each dimension $d$ and each category number  $1 \leq n \leq d$ (which we will suppress from our notation), there is a symmetric monoidal $(\infty,n)$-category which we will denote $\Bord^{(X, \xi)}_d$ to distinguish from the non-extended case. Philosophically it has objects which are closed compact  $(d-n)$-manifolds, 1-morphisms which are $(d-n+1)$-dimensional bordisms, $2$-morphisms which are $(d-n+2)$-dimensional bordisms between bordisms, etc. up until dimension $d$. Above dimension $d$ we have invertible morphisms, encoded by the classiying spaces of the group of diffeomorphisms of bordisms, rel. boundary. 
In addition all of the manifolds and bordisms making up $\Bord^{(X, \xi)}_d$ will be equipped with $(X, \xi)$-structures. 

To make this philosophy precise we should fix a model of $(\infty,n)$-categories, of which there are many equivalent choices \cite{Barwick:2011aa}. One possible model is based on $n$-fold simplicial spaces. For example this multisimplicial approach to the higher bordism categories is taken in \cite{lurie-tft} and also \cite{CalSch1509,Nguyen-thesis}, and  we refer the reader to these sources for  more details.
 From this point of view our results can be interpreted in classical algebraic topology as statements about maps between certain multisimplicial spaces.  We will see shortly, however, that while the strongest and most general statements of our results are expressed using $(\infty,n)$-categories, the crucial mathematical ingredients and indeed most of our computations can actually be established just using the standard and long established theory of weak 2-categories (a.k.a. \emph{bicategories} in the sense of B\'{e}nabou \cite{MR0220789}). 


We will prove:

\begin{theorem*}[{Th.~\ref{thm:main}}] 
	Fix $n \geq 2$ and a tangential structure $(X, \xi)$ for $d$-manifolds. Assume either that $d \geq 3$ or that $(X, \xi)$ is spherophilic (Def.~\ref{def:spherophilic}).  Let $\cZ$ be an extended $d$-dimensional topological field theory 
	\begin{equation*}
		\cZ: \Bord_d^{(X, \xi)} \to \cC
	\end{equation*}
	taking values in the symmetric monoidal $(\infty,n)$-category $\cC$. 
		Let $T^{d-1} = (S^1)^{\times d-1}$ be the $(d-1)$-torus. If for each $[x] \in \pi_0 X$  we have that $\cZ(T^{d-1}, x_*\theta_{+1 \times \text{Lie}})$ is invertible, then 
 $\cZ$ is an invertible topological field theory. 
\end{theorem*}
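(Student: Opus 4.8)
The plan is to reduce the global assertion ``$\cZ$ is invertible'' to the invertibility of a single generating object, to translate the Lie-framed torus hypothesis into a statement about the categorical dimension of that object, and then to prove that this dimension detects invertibility. First I would record that $\cZ$ is invertible exactly when its image lands in the maximal Picard $\infty$-groupoid $\mathrm{Pic}(\cC)$, i.e.\ when $\cZ$ sends every object of $\Bord_d^{(X,\xi)}$ to an invertible object and every $k$-morphism to an equivalence. Since invertible objects and equivalences are closed under the symmetric monoidal structure, under composition, and under the formation of adjoints and duals, and since $\Bord_d^{(X,\xi)}$ is generated under precisely these operations by a controlled collection of elementary pieces (handle attachments, via Morse/Cerf theory), it suffices to check the condition on generators. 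The bordism category has all adjoints, so $\cZ(W)$ automatically carries both adjoints for every bordism $W$; the only remaining content is that the units and counits of these adjunctions are equivalences, and this will follow from invertibility of the objects involved. Thus I reduce the theorem to showing that $\cZ$ sends each generating sphere---for $n=2$ the object $\cZ(S^{d-2})$ together with its structured variants indexed by $\pi_0 X$---to an invertible object of $\cC$.

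The geometric input comes next: forming the product with a Lie-framed $S^1$ computes the categorical dimension (the trace of the identity) in the target. Concretely, for a suitably structured closed manifold $N$ one has $\cZ(N\times S^1)\simeq\dim(\cZ(N))$ as an object of $\Omega\cC=\End_\cC(1)$, the Lie framing on the $S^1$ factor being exactly the one that realizes the symmetric-monoidal trace rather than an Euler-characteristic variant. Iterating, $\cZ(T^{d-1},x_*\theta_{+1\times\mathrm{Lie}})$ is an iterated dimension of the value on a lower torus or sphere, so the hypothesis that the torus value is invertible for each component $[x]\in\pi_0 X$ becomes the statement that $\dim$ of the relevant generating object is invertible in $\Omega\cC$. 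The stabilizing $+1$ and the per-component bookkeeping are precisely what is needed to match the $d$-dimensional tangential datum $(X,\xi)$ against the lower-dimensional underlying manifold.

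The heart of the argument is then a purely categorical lemma: a sufficiently dualizable object $x$ of a symmetric monoidal $(\infty,n)$-category is invertible if and only if $\dim(x)$ is an invertible object of $\Omega\cC$. One direction is formal, since $\dim$ is functorial and carries invertibles to invertibles. For the essential converse I would argue bicategorically, using the adjoint and duality data that $\cZ$ inherits from $\Bord_d^{(X,\xi)}$: express the unit and counit witnessing invertibility of $x$ in terms of the dimension, and show by string-diagram manipulation that invertibility of $\dim(x)$ forces these to be equivalences. Higher-categorical cases reduce to this bicategorical computation by looping, which dovetails with an induction on $n$. Here $\Omega\cC$ is symmetric monoidal $(\infty,n-1)$, dimensional reduction by $S^1$ turns $\Omega\cZ$ into an extended theory one categorical level lower whose torus hypothesis is inherited from that of $\cZ$, and inducting down to the bicategorical base case completes the reduction. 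In the base case the hypothesis that $d\geq 3$ or that $(X,\xi)$ is spherophilic (Def.~\ref{def:spherophilic}) is what disposes of the low-dimensional framing subtleties attached to $S^0$ and $S^1$.

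The main obstacle I expect is the converse half of this last lemma, that invertibility of the dimension forces invertibility of the object: this is where the dualizability available in the bordism category must be used carefully, and where the bulk of the explicit bicategorical computation lives. A secondary but genuine difficulty is verifying that the looping and dimensional-reduction step transports the \emph{Lie-framed} torus condition correctly, so that the hypothesis of the theorem at level $n$ really does specialize to its analogue at level $n-1$ rather than to some Euler-characteristic twist.
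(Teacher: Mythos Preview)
Your proposal has the right high-level shape (induction on $n$, reduction to a bicategorical core), but it contains two genuine gaps, both at the point you correctly identify as the hardest.

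\textbf{First, the ``generators'' reduction is wrong for $n<d$.} When $n=2$ the objects of $\Bord_d^{(X,\xi)}$ are \emph{all} closed $(d-2)$-manifolds, and these are not generated from $S^{d-2}$ under disjoint union, duals, and adjoints; a genus-$g$ surface or $\C P^2$ is not built that way. The paper spends most of its effort precisely here (Sections~\ref{sec:induct_SO} and~\ref{sec:induct_general}): it runs a separate induction on the dimension $d$, using two different dimensional reductions (along $S^1$ and along an arbitrary $(d-2)$-manifold $M$) and a surgery argument (Lemma~\ref{lem:surgery_invertibility}, Cor.~\ref{cor:surgery_invariance_tangential}) to propagate invertibility from the torus to every $(d-2)$-manifold. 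Your iterated-dimension idea gives $\cZ(T^{d-1})\simeq\dim\cZ(T^{d-2})$, so at best you learn $\cZ(T^{d-2})$ is invertible; nothing in your outline bridges from the torus to a general $(d-2)$-manifold.

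\textbf{Second, the ``purely categorical lemma'' is not purely categorical, and is likely false without the spherophilic hypothesis.} You propose that for a sufficiently dualizable $x$, invertibility of $\dim(x)$ forces invertibility of $x$. Via the cobordism hypothesis this is exactly the statement that a \emph{framed} $2$-dimensional extended TFT with $\cZ(S^1,\theta_{+1})$ invertible is itself invertible. The paper does \emph{not} prove this; it proves only the spherophilic case (Prop.~\ref{pro:basecase_spherophilic}), and in fact conjectures the framed version fails (see the Conjecture at the end of Section~\ref{sec:base_genstr}). The string-diagram computation you allude to is the content of Figures~\ref{fig:decomposition_coposedSaddles1}--\ref{fig:Cylinder-Sphere-eqn}, and the crucial step there requires the cup and cap to bound the \emph{same} framed circle, i.e.\ $x_*\theta_{+1}\simeq x_*\theta_{-1}$, which is exactly the spherophilic condition (Lemma~\ref{lem:spherophilia}). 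So the hypothesis ``$d\ge 3$ or spherophilic'' is not a side remark that ``disposes of low-dimensional subtleties''; it is what makes your central lemma true at all, and you would need to weave it into the heart of the bicategorical argument rather than invoke it at the end.
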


\noindent Here a theory is \emph{invertible} if it assigns invertible values to all manifolds (and $\otimes$-invertible objects to $(d-n)$-manifolds). This is the natural generalization of invertible theory in the extended context. For each point $x \in X$ we get an induced map, denoted $x_*$, from $d$-framings to $(X,\xi)$-structures, and $\theta_{+1\times \text{Lie}}$ denotes the product $d$-framing on the $(d-1)$-torus which is the $+1$ (bounding) 2-framing on the first $S^1$-factor and is the Lie group 1-framing on the remaining factors.  When $d=2$, the term \emph{spherophilic}, or `sphere-loving', means a tangential structure where the 2-sphere admits such a structure (this is discussed in Section~\ref{sec:spherophilia}). In the course of this text we will discuss many details of this theorem and its statement. 
In short even in this generality the invertibility of the entire theory is completely governed by the invertibility of a single value of the theory (for each component of $X$). 

A key geometric fact  which is an ingredient in the above theorem, and which partly explains why the above results hold when the category number $n \geq 2$, is that handle-decompositions for manifolds use handles with codimension-two corners. When $n \geq 2$ this allows us to implement certain geometric arguments in categorical terms, completely inside the higher category $\Bord_d^{(X, \xi)}$. This includes handle decomposition and handle moves for $d$-manifolds, and surgery for $(d-1)$-manifolds.

Similar unpublished results have been obtained by Dan Freed and Constantin Teleman. Their work has focused on 
the oriented and fully-local ($n=d$) case, and was described briefly in a footnote in \cite[pg.9]{Freed:2014aa} and in a lecture of Freed's \cite{Freed-Aspect}.
\begin{theorem*}[Freed-Teleman]
	Let $\cZ: \Bord_n^{SO} \to \cC$ be a fully-extended oriented topological field theory valued in the symmetric monoidal $(\infty,n)$-category $\cC$. Then if either
	\begin{enumerate}
		\item $\cZ(S^k)$ is invertible for some $k \leq n/2$; or
		\item $\cZ(S^n)$ is invertible and $\cZ(S^p \times S^{n - p -1})$ is invertible for all $p$;
	\end{enumerate}
	then $\cZ$ is invertible. 
\end{theorem*}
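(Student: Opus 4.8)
The plan is to prove the Freed–Teleman theorem by reducing it to the main theorem via the key geometric principle already established: invertibility of the whole theory is governed by invertibility on a single, well-chosen value, and invertibility propagates through the bordism category once we have enough invertible objects to run handle/surgery arguments. Since this is the oriented case ($X = BSO(n)$, which is connected), the main theorem tells us that $\cZ$ is invertible as soon as $\cZ(T^{n-1})$ with its designated product framing is invertible. So the entire problem reduces to showing that each hypothesis, (1) or (2), forces $\cZ(T^{n-1})$ to be invertible.

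First I would recall the elementary but crucial fact that a symmetric monoidal functor sends dualizable/invertible objects to dualizable/invertible objects, and more generally sends any object with a given amount of adjunction data to an object with the same data. In particular, if $\cZ(M)$ is invertible then so is $\cZ(M')$ for any $M'$ obtained from $M$ by a bordism that is itself sent to an invertible (hence equivalence) morphism. The engine for upgrading ``invertible on a few manifolds'' to ``invertible on tori'' is surgery: the $(n-1)$-torus $T^{n-1} = (S^1)^{\times(n-1)}$ is built from spheres and products of spheres by a sequence of surgeries, each realized by a bordism inside $\Bord_n^{SO}$ whose invertibility we can control. Concretely, for case (1) I would use that invertibility of $\cZ(S^k)$ for a single small $k \le n/2$ propagates upward: tensoring, dualizing, and performing surgeries (all operations available because $n \ge 2$ gives us the codimension-two corners needed to realize handle attachments categorically) generates invertible values on a large class of manifolds, and in particular on all the pieces $S^p \times S^{n-p-1}$ and ultimately on $T^{n-1}$. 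The point is that a single low-dimensional invertible sphere value, combined with the monoidal and duality structure, is enough to force the ``generating'' sphere values to be invertible.

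For case (2) the hypotheses are tailored precisely to the surgery decomposition of the torus: $\cZ(S^n)$ invertible controls the top-dimensional closing-off, and $\cZ(S^p \times S^{n-p-1})$ invertible for all $p$ supplies invertibility at each intermediate surgery step, since attaching a $p$-handle trades an $S^{p-1} \times D^{n-p}$ boundary piece for a $D^p \times S^{n-p-1}$ piece and the relevant closed manifolds appearing along the way are exactly products of spheres. I would build $T^{n-1}$ (or rather its closed-off $n$-dimensional trace) by a handle decomposition, and at each stage argue that the bordism implementing the handle attachment is sent by $\cZ$ to an invertible morphism, using the invertibility of the sphere-product values as the inductive input. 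Threading these together shows $\cZ(T^{n-1})$ is invertible, at which point the main theorem closes the argument.

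The main obstacle I anticipate is the careful bookkeeping of framings and orientations rather than any conceptual difficulty. The main theorem requires invertibility of $\cZ$ on the torus equipped with a \emph{specific} product framing $\theta_{+1 \times \mathrm{Lie}}$, not merely on the underlying oriented manifold, so I would need to verify that the surgeries and handle attachments assembling the torus can be carried out compatibly with (and produce) exactly this framing data, and that the sphere values appearing in the Freed–Teleman hypotheses are interpreted with their correct induced structures. A second, more technical point is ensuring that ``invertible value'' at the level of objects or $k$-morphisms genuinely propagates through the surgery bordisms: one must check that the relevant bordisms are not merely dualizable but that the functor sends them to equivalences, which in the fully-extended setting follows once the boundary objects are invertible, but this requires the full strength of $n$-dualizability. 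Managing these compatibilities dimension-by-dimension, while keeping the inductive structure clean, is where the real work lies.
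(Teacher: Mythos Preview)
The paper does not prove this theorem. It is quoted as an unpublished result of Freed and Teleman, stated only for context and comparison with the paper's own main theorem; no argument is given. So there is no ``paper's own proof'' to compare against. (The paper in fact remarks on the \emph{opposite} implication: that in the overlap of the two theorems one can deduce the torus criterion from Freed--Teleman's case~(1).)

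Your proposed reduction nonetheless has a genuine gap. You want to show that either hypothesis forces $\cZ(T^{n-1})$ to be invertible and then invoke Theorem~\ref{thm:main_thm_SO}. For case~(2) you propose to reach $T^{n-1}$ from sphere-products by surgery. But the surgery mechanism of Lemma~\ref{lem:surgery_invertibility} is not free-standing: its hypothesis is that \emph{every} $(n-2)$-manifold already has invertible value, so that Lemma~\ref{lem:symmetric_mon_invert_mor} can be applied at each step. The Freed--Teleman hypothesis~(2) gives invertibility only of certain closed $(n-1)$-manifolds and of the scalar $\cZ(S^n)$; it says nothing about the $(n-2)$-dimensional objects. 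In the paper's own logic that object-level invertibility is established \emph{first} (via dimensional reduction and the $d=2$ base case), and only afterwards is surgery used. You are invoking the surgery step without its prerequisite and give no indication how to supply it. For case~(1) the problem is sharper: the claim that invertibility of a single $\cZ(S^k)$ ``propagates upward'' via ``tensoring, dualizing, and performing surgeries'' is not an argument---no concrete mechanism is named.

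One smaller point: your worry about the specific framing $\theta_{+1}\times\theta_{\mathrm{Lie}}$ is a red herring here. In the oriented case $X=BSO(n)$ is connected and $x_*(\theta_{+1}\times\theta_{\mathrm{Lie}})$ is simply the standard orientation on $T^{n-1}$, so the torus condition is just that $\cZ(T^{n-1})$ be invertible.
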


In the situation where our Theorem and theirs both apply (oriented fully-local theories), it is easy to deduce our result from their case (1). However one of the features which we find interesting is precisely that we \emph{do not} need to assume the topological field theory is fully-local. Our result also applies to theories with arbitrary tangential structures. 


\subsection{Invertible field theories}
Invertible topological field theories are the simplest and among the most computable topological field theories that are known. They have occurred `in nature' in many contexts: 
\begin{itemize}
	\item The most basic example of an oriented topological field theory which exists in all dimensions is the `Euler theory'. This theory assigns the trivial 1-dimensional vector space to each $(d-1)$-manifold and assigns the (exponential of the) relative Euler characteristic
	\begin{equation*}
		\cZ(Y_\text{in} \stackrel{W}{\to} Y_\text{ouy}) = \lambda^{\chi(W, Y_\text{in})}
	\end{equation*} 
	to each bordisms (here $\lambda \in k^\times$ is a fixed non-zero scaler parametrizing the theory). 
	\item Another example which exists in all dimensions is classical Dijkgraff-Witten theory. This theory, which in dimension $d$ is parametrized by a finite group $G$ and a characteristic class $\omega \in H^d(BG; \C^\times)$, assigns data to oriented manifolds equipped with principal $G$-bundles. It assigns trivial 1-dimensional vector spaces to each $(d-1)$-manifold and to a closed oriented $d$-manifold $M$ with principal $G$-bundle $P$ it assigns
\begin{equation*}
	\langle [M], \omega(P) \rangle
\end{equation*}	
	the $\omega$-characteristic number of $P$.
	\item An invertible Spin theory, a version of the Euler theory based on the Arf invariant, appears in Gunningham's work \cite{Gunningham:2012aa} on Spin Hurewicz numbers. 
	\item Similar theories give local or partially local formulas for many bordism invariants such as characteristic classes and the signature.  
	\item Invertible field theories govern and control anomalies in more general quantum field theories. See for example the work of Freed \cite{MR3330283}. 
	\item There are also recent real-world applications of invertible topologial field theories to condensed matter physics. Specifically the low energy behavior of gapped systems experiencing \emph{short-range entanglement} are well-modeled by invertible topological field theories, see for example \cite{Freed:2014aa, Kapustin:2015aa}.
	\item One approach to Quantum Chern-Simons theory describes it as an invertible 4-dimensional theory coupled together with a 3-dimensional boundary theory. See for instance \cite{MR2648901, walker-note-2006}
	\item Invertible field theories are also one of the key ingredients in the study of what are called `relative field theories' by Freed-Teleman \cite{MR3165462} and `twisted field theories' by Stolz-Teichner \cite{Stolz:2011aa}. 
\end{itemize}

For extended topological field theories and those valued in general targets we will say that a topological field theory is \emph{invertible} when it assigns invertible values to all manifolds and bordisms. In this case it takes values in an \emph{$\infty$-Picard} subcategory of the target. An $\infty$-Picard category is a symmetric monoidal $(\infty,n)$-category $E$ in which all objects and morphisms are invertible. It can also be defined as a symmetric monoidal $(\infty,n)$-category $E$ in which the shear map
\begin{equation*}
	(\otimes, \textrm{proj}_1): E \times E \to E \times E
\end{equation*}
is an equivalence. In this second definition it is clear that every object is $\otimes$-invertible, but in fact it also implies that every 1-morphism, 2-morphism, etc. is also invertible.  Hence $E$ is in actuality a symmetric monoidal $(\infty,0)$-category.

Another reason to single out the class of invertible topological field theories is that it is possible to completely classify them using stable homotopy theory. Grothendieck's homotopy hypothesis is the equivalence of homotopy theories between $(\infty,0)$-categories and topological spaces. This induces an equivalence between the homotopy theories of Picard $\infty$-categories and \emph{group-like} $E_\infty$-spaces, a.k.a.  connective spectra. Moreover by a well-known but unpublished result\footnote{An account of this is forthcoming \cite{Schommer-Pries-Invert}.}, which extends the work of Galatius-Madsen-Tillmann-Weiss \cite{MR2506750}, we can identify the geometric realization of $\Bord_d^{(X, \xi)}$ with the $E_\infty$-space $\Omega^{\infty-n} MT\xi$, a shift of a certain cover of the Madsen-Weiss spectrum. It then follows (see for example the discussion in \cite[Sect~2.5]{lurie-tft}) that extended field theories valued in the Picard $\infty$-category $E$ are in natural bijection with 
\begin{equation*}
	\pi_0\Map_{E_\infty}(\Omega^{\infty-n} MT\xi, E).
\end{equation*} 
That is with homotopy classes of  infinite loop maps from $\Omega^{\infty-n} MT\xi$ to $E$. In many cases, depending on $E$, this can be completely computed (see \cite{Freed:2014aa} for several examples where these computations are carried out).

\subsection{An Application: Crane-Yetter TQFTs are invertible}

The Crane-Yetter topological field theory \cite{MR1273569} is an oriented 4-dimensional field theory originally constructed from a modular tensor category. In \cite{MR1452438} this TQFT was shown to arise via a state-sum construction and the input was generalized to allow arbitrary balanced braided fusion categories, also called \emph{premodular categories}. 

This topological field theory is also known to be an extended field theory, as expected for any state-sum theory. Its description as an extended field theory has been given by Walker \cite{walker-note-2006} (see also \cite{Walker-ESI-talk}). It was also studied by Walker and Wang \cite{WalWan1104} (and is sometimes called the \emph{Walker-Wang model}). In that work they provide a skein-theoretic formula for the vector space associated to each 3-manifold. In the case of the 3-torus the vector space has a natural basis spanned by the indecomposable \emph{transperent objects}, those objects which braid trivially with all other objects\footnote{Specifically, $X$ is transparent if for each $Y$ we have $c_{X,Y} \circ c_{Y,X} = id_{Y \otimes X}$, where $c_{-,-}$ is the braiding morphism.}. 

Given a braided fusion category $\cC$ the subcategory of transparent objects will be a symmetric monoidal category which is called the \emph{M\"uger center} $\cZ_2(\cC)$. In the case of unitary categories, M\"uger showed \cite[Prop~2.11]{MR1749250} that a balanced braided fusion category is modular precisely if there is one irreducible transparent object, the unit object. This is known to hold also in the non-unitary case. 

Putting these facts together we see that the Crane-Yetter theory associated to a modular tensor category is an extended 4-dimensional theory in which the value associated to the 3-torus is a one-dimensional, hence invertible, vector space. It follows from our main theorem that the whole theory must then be invertible. Following the approach outlined above to classifying invertible theories using stable homotopy  we see that such theories are classified by
\begin{equation*}
	H_\otimes^0(MTSO(4), H\C^\times) \cong H^4(BSO(4); \C^\times) \cong \C^\times \times \C^\times
\end{equation*}
with the two factors corresponding to the Euler class and the $1^\text{st}$ Pontryagin class. Hence we recover the previously known result that the Crane-Yetter invariant is classical \cite{MR1273570,MR1362787}:

\begin{corollary*}
	If $\cC$ is a modular tensor category then there exist (non-zero) constants $\lambda_1, \lambda_2 \in \C$ such that the Crane-Yetter invariant of any closed oriented 4-manifold $W$ is given by
	\begin{equation*}
		CY(W) = \lambda_1^{\chi(W)} \cdot \lambda_2^{p_1(W)}
	\end{equation*} 
	where $\chi(W)$ and $p_1(W)$ are the Euler characteristic and $1^\text{st}$ Pontryagin number, respectively. 
\end{corollary*}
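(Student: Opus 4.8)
The plan is to combine the main theorem, Theorem~\ref{thm:main}, with the homotopy-theoretic classification of invertible oriented theories recalled above. First I would record that the Crane--Yetter theory $CY_\cC$ associated to a premodular category $\cC$ is at least once-extended: this is the content of Walker's extended and skein-theoretic description \cite{walker-note-2006, WalWan1104}, which in particular assigns a vector space to each closed oriented $3$-manifold via a skein module, realized as an object of $\End_\cC(1)$. The crucial input is the value on the $3$-torus. By the Walker--Wang formula, $\cZ(T^3)$ has a basis indexed by the indecomposable transparent objects of $\cC$, that is by the simple objects of the M\"uger center $\cZ_2(\cC)$. When $\cC$ is modular, M\"uger's criterion \cite[Prop~2.11]{MR1749250} identifies the only simple transparent object as the unit, so $\dim_\C \cZ(T^3) = 1$ and the value on the $3$-torus is $\otimes$-invertible.

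Next I would verify the hypotheses of Theorem~\ref{thm:main}. Here $d = 4 \geq 3$, and the tangential structure is an orientation, so $X = BSO(4)$ is connected and $\pi_0 X = \ast$; there is thus a single condition to check, namely that $\cZ(T^3, x_*\theta_{+1\times\text{Lie}})$ be invertible. Since the skein-module computation above is insensitive to the choice of orientation, the particular oriented $3$-torus appearing in the theorem is again assigned a one-dimensional space, hence an invertible value. Theorem~\ref{thm:main} then applies, for any category number $n \geq 2$ to which $CY_\cC$ is extended, and concludes that $CY_\cC$ is an invertible oriented $4$-dimensional topological field theory.

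I would then invoke the classification. Following the discussion above, invertible oriented $4$-dimensional theories valued in $\C$-lines are classified by $H_\otimes^0(MTSO(4), H\C^\times) \cong H^4(BSO(4);\C^\times)$, and I would compute this group. Since $H^4(BSO(4);\Z) \cong \Z\langle e\rangle \oplus \Z\langle p_1\rangle$ is free of rank two on the Euler class $e$ and the first Pontryagin class $p_1$, and $\C^\times$ is divisible and hence injective, the universal coefficient theorem gives $H^4(BSO(4);\C^\times) \cong \Hom(H_4(BSO(4);\Z),\C^\times) \cong \C^\times \times \C^\times$, with coordinates $(\lambda_1,\lambda_2)$ dual to $e$ and $p_1$. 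Thus $CY_\cC$ corresponds to some class $\omega = \omega(\lambda_1,\lambda_2)$. Evaluating the partition function on a closed oriented $4$-manifold $W$ with classifying map $\tau_W : W \to BSO(4)$, the invertible theory assigns the characteristic number $CY(W) = \langle [W], \tau_W^*\omega\rangle$; splitting $\omega$ along the two generators and using the classical identities $\langle e(TW),[W]\rangle = \chi(W)$ (Gauss--Bonnet) and $\langle p_1(TW),[W]\rangle = p_1(W)$, this multiplicative pairing becomes $CY(W) = \lambda_1^{\chi(W)}\cdot\lambda_2^{p_1(W)}$, as claimed.

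The real substance is carried by the two cited ingredients rather than by the bookkeeping above. The first is the skein-theoretic identification of $\cZ(T^3)$ with $\cZ_2(\cC)$ together with M\"uger's modularity criterion; granting these, one-dimensionality is immediate. The second, and more serious, obstacle is the homotopy-theoretic classification of invertible theories as $E_\infty$-maps out of $\Omega^{\infty-n}MTSO(4)$, which rests on the currently unpublished extension of Galatius--Madsen--Tillmann--Weiss \cite{MR2506750}. Once both are available, verifying the framing hypothesis of Theorem~\ref{thm:main}, running the universal-coefficient computation, and recognizing the Euler and Pontryagin numbers are all routine.
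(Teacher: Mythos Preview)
Your proposal is correct and follows essentially the same route as the paper: invoke Walker's extended description and the Walker--Wang/M\"uger computation to see that $\cZ(T^3)$ is one-dimensional, apply the main theorem to conclude invertibility, and then read off the formula from the stable-homotopy classification $H^4(BSO(4);\C^\times)\cong\C^\times\times\C^\times$ with generators the Euler and first Pontryagin classes. You have simply added a bit more detail (the universal-coefficient step, the check that $\pi_0 X=\ast$ so only one torus condition is needed) than the paper's more telegraphic discussion.
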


The numbers $\lambda_1$ and $\lambda_2$ are derived from the central charge and global dimension of the modular tensor category.

\subsection{Overview} Our main Theorem~\ref{thm:main} is a general result about extended field theories valued in symmetric monoidal $(\infty,n)$-categories. However in Section~\ref{sec:extending_down} we will show how the general case can be deduced from the case $n=2$, that is where $\Bord_d^{(X, \xi)}$ is a symmetric monoidal $(\infty,2)$-category.  We will call the corresponding field theories \emph{once-extended} topological field theories to indicate they have one additional categorical layer beyond the original Atiyah-Segal formulation.

Every symmetric monoidal $(\infty,2)$-category $\cC$ has an underlying symmetric monoidal weak 2-category $h_2\cC$, often called the \emph{homotopy 2-category} of $\cC$. This 2-category has the same objects and 1-morphisms as $\cC$, but the 2-morphisms are the equivalence classes of 2-morphisms in $\cC$.  The question of whether an $(\infty,2)$-categorical topological field theory 
\begin{equation*}
	\cZ: \Bord_d^{(X, \xi)} \to \cC
\end{equation*}
is invertible is completely determined by the corresponding  2-categorical theory:
\begin{equation*}
	h_2 \cZ: h_2\Bord_d^{(X, \xi)} \to h_2\cC.
\end{equation*}
$\cZ$ is invertible if and only if $h_2 \cZ$ is invertible. 

This permits us to eschew the world of $(\infty,2)$-categories and work entirely in the theory of symmetric monoidal bicategories. From now on, unless otherwise stated, $\Bord_d^{(X, \xi)}$ will denote the corresponding symmetric monoidal weak 2-category of bordisms, as constructed in \cite{Schommer-Pries:2011aa} (See \cite{scheimbauer-thesis} for a comparison between this notion and the $(\infty,2)$-categorical notion).   

After making these simplifications, our main theorem is proven inductively. One of the crucial tools which we use to compare theories of different dimensions is the technique of \emph{dimensional reduction}. Dimensional reduction is usually encountered in the context of oriented theories. If we fix a $k$-dimensional oriented manifold $M$, then this gives rise to a symmetric monoidal functor
\begin{equation*}
	(-) \times M: \Bord^{SO}_{d-k} \to \Bord^{SO}_d.
\end{equation*}
which sends a manifold $Y$ to $Y \times M$. 
If we are given a $d$-dimensional field theory $\cZ$, then by pre-composing with the above map we obtain a $(d-k)$-dimensional theory. Thus theorems about lower dimensional theories have direct consequences for higher dimensional theories as well. 

In the presence of general tangential structures, the process of dimensional reduction is much more subtle. More importantly both our induction strategy and the base case $(d=2)$ become significantly more complicated. For this reason we will first concentrate on the oriented version of the main theorem, and then explain how to adapt the argument in the presence of general tangential structures. 

We have organized this paper as follows: 
\begin{itemize}
	\item  In Section~\ref{sec:cat_prelim} we establish a few fundamental algebraic/categorical results about detecting invertibility in monoidal categories.
	\item In Section~\ref{sec:moving_up} we prove the key Lemma~\ref{lem:moving_up} which shows that if the bottom two layers of a once-extended topological field theory are invertible, then so is the top layer (invertibility can be `pushed upward').
	\item In Section~\ref{sec:base_case_SO} we prove the base case ($d=2$) of our theorem for oriented theories. The argument in this section is due to Freed-Teleman \cite{Freed-Aspect}.  
	\item In Section~\ref{sec:induct_SO} we give the inductive argument establishing our main theorem, again only in the oriented setting.
	\item In Section~\ref{sec:general_tangential} we discuss general tangential structures.
	\item  In Section~\ref{sec:moving_up_general} we adapt the results of Section~\ref{sec:moving_up} to the case of general tangential structures. 
	\item In Section~\ref{sec:base_genstr} we discuss two-dimensional theories with general structures. We review some basic facts about 2-framed bordisms, we introduce the notion of \emph{spherophilic} tangential structures (those such that the 2-sphere admits such a structure), and we prove the base case ($d=2$) when the tangential structure is  spherophilic.
	\item In Section~\ref{sec:dimensional_redux} we describe the various forms of dimensional reduction that we will need to use in the presence of general tangential structures.  
	\item In Section~\ref{sec:induct_general} we describe how to modify our previous inductive argument (given for oriented theories) to the case of general tangential structures. 
	\item In Section~\ref{sec:extending_down} we show how to use the $(\infty,2)$-categorical results already established to obtain the $(\infty,n)$-categorical results stated in Theorem~\ref{thm:main}.
\end{itemize}



\subsection*{Acknowledgements}

I am extremely grateful to Chao-Ming Jian whose question \cite{MO-218982} on MathOverflow was an important source of inspiration for this work. I am also grateful to Dan Freed and Constantin Teleman for their work on detecting invertibility in fully-local extended field theories and for the key calculation in Section~\ref{sec:base_case_SO} of oriented version of the base case, $d=2$. And, for their immense hospitality, I am indebted to the Max-Planck Institute for Mathematics in Bonn, where this work was carried out.

\section{Categorical observations} \label{sec:cat_prelim}
The following lemma will be used repeatedly. 

\begin{lemma}\label{lem:symmetric_mon_invert_mor}
	Let $\cC$ be a monoidal category. Let $f: x \to y$ and $g:y \to z$ be morphisms in $\cC$. Suppose that the objects $x$, $y$, and $z$ are invertible in $\cC$, and that the composite $g \circ f$ is an  isomorphism. Then both $f$ and $g$ are  isomorphisms. 
\end{lemma}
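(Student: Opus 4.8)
The plan is to produce honest two-sided inverses for $f$ and $g$, leveraging the hypothesis that the three \emph{objects} are invertible. The starting observation is the standard fact that an invertible composite splits its factors: since $g\circ f$ is an isomorphism, the morphism $a := (g\circ f)^{-1}\circ g$ satisfies $a\circ f=\mathrm{id}_x$, so $f$ is left-invertible, and dually $b := f\circ(g\circ f)^{-1}$ satisfies $g\circ b=\mathrm{id}_z$, so $g$ is right-invertible. It therefore remains only to upgrade these one-sided inverses to two-sided ones; equivalently, to show that the idempotents $f\circ a$ and $b\circ g$ on the object $y$ are both equal to $\mathrm{id}_y$. This is exactly the step where invertibility of the objects must enter, since for a general object an idempotent endomorphism need not be trivial.

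The mechanism I would use is that endomorphisms of an invertible object are ``scalars.'' Writing $R:=\End_{\cC}(1)$ for the (central) monoid of endomorphisms of the unit, each $R$ acts on every hom-set by $s\cdot h := \lambda\circ(s\otimes h)\circ\lambda^{-1}$, an action compatible with composition on both sides and satisfying $s\cdot\mathrm{id}_1=s$. For an invertible object $w$ the functor $(-)\otimes w^{-1}$ is an equivalence carrying $\End(w)$ to $\End(1)=R$, and under it $s\cdot\mathrm{id}_w\mapsto s$; hence $s\mapsto s\cdot\mathrm{id}_w$ is a monoid isomorphism $R\xrightarrow{\ \sim\ }\End(w)$. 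Applying this to the invertible object $y$, I may write $f\circ a=s\cdot\mathrm{id}_y$ for a unique $s\in R$. Then, using $a\circ f=\mathrm{id}_x$ and the compatibility of the action with composition, $s\cdot a=a\circ(f\circ a)=(a\circ f)\circ a\cdot\!$ rearranges to $s\cdot a=a$, whence $s\cdot\mathrm{id}_x=s\cdot(a\circ f)=(s\cdot a)\circ f=a\circ f=\mathrm{id}_x$. Since $x$ is invertible, $\End(x)\cong R$ via scalars, so $s\cdot\mathrm{id}_x=\mathrm{id}_x$ forces $s=1_R$; therefore $f\circ a=\mathrm{id}_y$ and $f$ is an isomorphism. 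The identical argument applied to $b\circ g=s'\cdot\mathrm{id}_y$, using $g\circ b=\mathrm{id}_z$ and the invertibility of $z$, gives $s'=1_R$ and hence that $g$ is an isomorphism.

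The routine inputs here are the split-mono/split-epi observation and the formal properties of the central $R$-action (compatibility with composition and $s\cdot\mathrm{id}_1=s$). The one conceptual step that carries the whole argument — and the place I expect to spend the most care — is the identification $\End(w)\cong R$ for invertible $w$ via the scalar map, together with the remark that this identification sends $\mathrm{id}_w$ to $1_R$. This is what converts the problem into a computation inside the (commutative) monoid $R$, where the hypothesis ``$g\circ f$ is invertible'' pins the relevant scalar to the unit. Without invertibility of the objects the idempotents $f\circ a$ and $b\circ g$ could be genuine nontrivial idempotents, so this is precisely the hypothesis that cannot be dropped.
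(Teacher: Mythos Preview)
Your proof is correct, but it proceeds quite differently from the paper's. Both arguments begin with the same split observation (your $a=(gf)^{-1}g$ with $a\circ f=\mathrm{id}_x$), but then diverge. You invoke the $R$-linear structure: for $R=\End_{\cC}(1)$ the scalar action $s\mapsto s\cdot\mathrm{id}_w$ is an isomorphism $R\cong\End(w)$ when $w$ is invertible, so the idempotent $f\circ a$ becomes a scalar $s$, and a short computation in $R$ forces $s=1_R$. The paper instead reduces by hand: it first replaces $g$ by $(gf)^{-1}g$ to make $z=x$ and $gf=\mathrm{id}_x$, then tensors with $y^{-1}$ to reduce to $y=1$, and finally uses a retract-of-an-isomorphism trick (exhibiting $\mathrm{id}_x\otimes g$ as a retract of $1\cong 1\otimes 1$) to conclude $g$ is invertible after cancelling one copy of $x$.

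What each buys: the paper's argument is more bare-hands---no mention of an $R$-action, just explicit diagrams and the categorical fact that retracts of isomorphisms are isomorphisms. Your argument is more structural, packaging the role of invertibility into the single clean statement $\End(w)\cong R$; once that is in place the remaining computation is purely algebraic in $R$. Both use invertibility of two of the three objects (you use $y$ to name the scalar and $x$ to pin it to $1_R$; the paper uses $y$ to reduce to $y=1$ and $x$ for the final cancellation), so neither is cheaper in hypotheses.
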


\begin{proof}
	Composition with isomorphisms preserves and reflects the property of being an isomorphism. Thus, by post-composing $g$ with $(gf)^{-1}$, we may assume without loss of generality that $z = x$ and that $g \circ f = id_x$. In other words $f$ and $g$ exhibit $x$ as a retract of $y$. 
	
	Each object $w \in \cC$ gives rise to an endofunctor $w \otimes (-): \cC \to \cC$, and if $w$ is invertible, then this is an equivalence of categories; it then reflects isomorphisms. It follows, by tensoring with the inverse of $y$ and composing with the isomorphism $y \otimes y^{-1} \cong 1$,  that we may assume  that $y =1$, the unit object.
	
In short we have reduced to the case that that $f$ and $g$ exhibit $x$ as a retract of the unit object. 
The morphisms $f \otimes f$ and $g \otimes g$ exhibit $x \otimes x$ as a retract of $1 \otimes 1 \cong 1$, and the following diagram:
\begin{center}
\begin{tikzpicture}
		\node (LT) at (0, 1.5) {$ 1  $};
		\node (LB) at (0, 0) {$ x \cong x \otimes 1 $};
		\node (RT) at (3, 1.5) {$ 1 \otimes 1 $};
		\node (RB) at (3, 0) {$ x \otimes x $};
		\draw [->] (LT) -- node [right] {$ g $} (LB);
		\draw [->] (LT) -- node [above] {$ \cong $} (RT);
		\draw [->] (RT) -- node [left] {$ g \otimes g $} (RB);
		\draw [->] (LB) -- node [below] {$ id \otimes g $} (RB);
		\draw [->, dashed] (LB) to [out = 135, in = 225] node [left] {$f$} (LT);
		\draw [->, dashed] (RB) to [out = 45, in = -45] node [right] {$f \otimes f$} (RT);	
\end{tikzpicture}
\end{center} 	
then exhibits the arrow $(Id \otimes g): x \cong x \otimes 1 \to x \otimes x $ as  a retract of the isomorphism $1 \cong 1 \otimes 1$. Since retracts of isomorphisms are isomorphisms, we conclude that $(Id \otimes g)$ gives an isomorphism $x \cong x \otimes x$. However since $x$ is invertible, we may cancel the left-hand copy of $x$ and conclude that $g: 1 \to x$ is an isomorphism. It follows that both $f$ and $g$ are isomorphisms, as desired.  	
\end{proof}

\begin{remark}
	In fact this lemma can be generalized a bit further. It also holds when $x$, $y$, and $z$ are invertible parallel morphisms in a weak 2-category. 
\end{remark}

\noindent Recall the following standard fact:

\begin{lemma}\label{lem:invertible_unit}
	A 1-morphism $f$ in a 2-category is invertible if and only if it admits an adjoint such that the unit and counit 2-morphisms of the adjunction are invertible. \qed
\end{lemma}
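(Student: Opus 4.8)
The plan is to prove the two implications separately, with essentially all of the content residing in the ``only if'' direction, which is the classical fact that every equivalence in a $2$-category underlies an adjoint equivalence.

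The ``if'' direction is immediate. Suppose $f$ admits an adjoint $g$ with unit $\eta\colon 1 \Rightarrow g\circ f$ and counit $\varepsilon\colon f \circ g \Rightarrow 1$ both invertible. Then $\eta$ and $\varepsilon$ exhibit invertible $2$-morphisms $g \circ f \cong 1$ and $f \circ g \cong 1$, so $g$ is a two-sided inverse to $f$ up to invertible $2$-morphism; hence $f$ is invertible by definition.

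For the ``only if'' direction, suppose $f\colon x \to y$ is invertible, so there is a $1$-morphism $g\colon y \to x$ together with invertible $2$-morphisms $\eta\colon 1_x \Rightarrow g\circ f$ and $\varepsilon\colon f\circ g \Rightarrow 1_y$. The difficulty is that this raw equivalence data need not satisfy the triangle (zig-zag) identities, so $(f,g,\eta,\varepsilon)$ need not yet be an adjunction. My strategy is to retain $g$ and $\eta$ and to correct $\varepsilon$. First I would measure the failure of the first triangle identity by the $2$-morphism
\[
	\Phi := (\varepsilon \ast 1_f)\circ(1_f \ast \eta)\colon f \Rightarrow f,
\]
which is invertible because $\eta$ and $\varepsilon$ are. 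Then I would define a new counit $\varepsilon' := \varepsilon \circ (\Phi^{-1} \ast 1_g)\colon f\circ g \Rightarrow 1_y$, which is again invertible. Expanding $\varepsilon' \ast 1_f = (\varepsilon \ast 1_f)\circ(\Phi^{-1} \ast 1_{gf})$ and using the interchange law to slide $\Phi^{-1}$ past $\eta$, one gets $(\varepsilon' \ast 1_f)\circ(1_f\ast\eta) = \Phi \circ \Phi^{-1} = 1_f$, so the first triangle identity now holds by construction.

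The main obstacle is to verify that the second triangle identity then holds automatically for the corrected data $(f,g,\eta,\varepsilon')$. Here I would introduce the ``failure'' $2$-morphism $\Psi := (1_g \ast \varepsilon')\circ(\eta \ast 1_g)\colon g \Rightarrow g$, which is invertible since it is assembled from invertible cells by whiskering and vertical composition. The key step is that, once the first triangle identity is known, a diagram chase with the interchange law shows $\Psi$ is idempotent, $\Psi \circ \Psi = \Psi$; an invertible idempotent $2$-endomorphism must equal the identity, so $\Psi = 1_g$ and the second triangle identity follows. This exhibits $(f,g,\eta,\varepsilon')$ as an adjunction with invertible unit and counit, completing the argument. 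The only genuinely delicate bookkeeping is the insertion of associator and unitor coherence $2$-morphisms in a non-strict bicategory; by the coherence theorem one may strictify and carry out the interchange computations exactly as above.
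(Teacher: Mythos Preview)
Your proof is correct and is precisely the standard argument that any equivalence in a bicategory can be upgraded to an adjoint equivalence by modifying the counit. The paper itself gives no proof at all: the lemma is stated with a bare \qed{} as a well-known fact, so there is nothing to compare beyond noting that you have supplied the details the paper omits.
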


\section{First observations about invertible field theories} \label{sec:moving_up}
Our first result shows that that invertibility can be pushed upward, provided there are at least two consecutive layers of invertibility. 

\begin{lemma}\label{lem:moving_up}
	Let $\cZ: \Bord^{SO}_d \to \cC$ be a once-extended topological field theory such that $\cZ(Y)$ is invertible for each $(d-2)$-dimensional manifold $Y$, and $\cZ(\Sigma)$ is invertible for every $(d-1)$-dimensional bordism $\Sigma$. Then $\cZ$ is invertible. 
\end{lemma}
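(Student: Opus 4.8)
The plan is to deduce invertibility of the whole theory by establishing its remaining (top) layer: the hypotheses already give that $\cZ(Y)$ is a $\otimes$-invertible object and that $\cZ(\Sigma)$ is an invertible $1$-morphism, so it suffices to show that $\cZ(W)$ is an invertible $2$-morphism for every $d$-dimensional bordism $W\colon \Sigma_0 \Rightarrow \Sigma_1$. At this stage the image of $\cZ$ already lands in the sub-$2$-category of $\cC$ on the invertible objects and invertible $1$-morphisms, but I want to stress that this is \emph{not} yet enough: in such a $2$-category a $2$-morphism between invertible parallel $1$-morphisms can still fail to be invertible (think of the zero map between two lines). So the argument must genuinely use the geometry of bordisms and not merely the formal structure of the target.

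The engine is the interaction of two facts. First, in $\Bord^{SO}_d$ every $(d-1)$-dimensional bordism $\Sigma$ is adjoint to its orientation-reverse $\bar\Sigma$, with unit and counit given by the bent-cylinder \emph{elbow} bordisms. Since $\cZ(\Sigma)$ is invertible, its inverse serves as an adjoint with invertible unit and counit, and by uniqueness of adjoints this identifies with $\cZ(\bar\Sigma)$; hence Lemma~\ref{lem:invertible_unit} forces the images of the elbows to be invertible $2$-morphisms. This furnishes a large supply of invertible $2$-morphisms in the image. Second, I would take a handle (Morse) decomposition of $W$ and write it, up to diffeomorphism, as a vertical composite of elementary cobordisms, each effecting a single surgery on its incoming $(d-1)$-manifold. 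Because these handles carry codimension-two corners, each elementary cobordism can be realized \emph{inside the bicategory} $\Bord^{SO}_d$ as a composite of elbow $2$-morphisms of invertible $(d-1)$-morphisms together with whiskerings by the invertible objects $\cZ(Y)$ and by cylinders.

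Assembling these, $\cZ(W)$ becomes a horizontal and vertical composite of $2$-morphisms each known to be invertible — the elbows by Lemma~\ref{lem:invertible_unit}, the remaining product factors by invertibility of the objects — and a composite of invertible $2$-morphisms is invertible. Where a single piece is only known to sit inside an invertible composite, I would peel off its invertibility using the $2$-categorical form of Lemma~\ref{lem:symmetric_mon_invert_mor} (the Remark following it), which applies precisely because all the intervening $1$-morphisms are invertible. The main obstacle is the middle step: making precise the geometric-to-categorical dictionary that realizes an arbitrary handle attachment as such a composite of adjunction elbows and invertible-object whiskerings. This is exactly where the hypothesis $n \geq 2$ (codimension-two corners) enters, and carefully verifying this translation — rather than the formal propagation of invertibility, which is routine given Lemmas~\ref{lem:symmetric_mon_invert_mor} and~\ref{lem:invertible_unit} — is where the real work lies.
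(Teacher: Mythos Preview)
Your reduction to elementary handle attachments via a Morse function matches the paper exactly, and your observation that the unit and counit elbows for any adjunction $\Sigma\dashv\bar\Sigma$ become invertible under $\cZ$ (since an equivalence has invertible unit and counit, and adjoints are unique up to isomorphism) is correct. The gap is in the step you yourself flag: realising a general handle as a composite of such elbows and whiskerings. An elbow for $\Sigma$ is, as a $d$-manifold, the product $\Sigma\times I$; its corner locus is governed by $\partial\Sigma$. For the extreme-index handles (those with $p$ or $q$ in $\{-1,0\}$) one can indeed bend via object-level duality and recognise the result as a unit or counit of $D^{d-1}\dashv\overline{D^{d-1}}$. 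But a middle-index handle $D^{p+1}\times D^{q+1}$ with $1\le p,q$ (so $d\ge4$) has connected corner locus $S^p\times S^q\not\cong S^{d-2}$; no choice of $\Sigma$ produces an elbow with this corner structure, and no bending splits a connected $(d-2)$-manifold. More abstractly, mating by elbows lets you reparse any $W$ to the form $\tilde W\colon \emptyset\to N$ with $N$ closed, but you are then left needing $\cZ(\tilde W)\colon 1\to\cZ(N)$ to be invertible knowing only that $\cZ(N)$ is --- the original problem in another guise.

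The paper supplies a different geometric input: \emph{handle cancellation}. One whiskers the $(p{-}1)$-handle by the bordism $D^p\times S^{q+1}$ (read as $\emptyset\to S^{p-1}\times S^{q+1}$) to obtain a $2$-morphism $H_{p-1}\colon S^{d-1}\to S^p\times S^{q+1}$, and whiskers the $p$-handle by $S^p\times D^{q+1}$ to obtain $H_p\colon S^p\times S^{q+1}\to S^{d-1}$. Both now lie between closed $(d-1)$-manifolds, which are invertible $1$-morphisms by hypothesis. The geometric identity $H_p\circ H_{p-1}\cong id_{S^{d-1}}$ is the key input; Lemma~\ref{lem:symmetric_mon_invert_mor} (in the $2$-categorical form noted in the Remark following it) then forces each of $\cZ(H_p)$ and $\cZ(H_{p-1})$ --- and hence each handle --- to be invertible. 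Your instinct to peel off invertibility via that lemma is exactly right; but the invertible composite one feeds into it comes from cancelling adjacent-index handles, not from adjunction elbows.
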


\begin{proof}
	We must show that every $d$-dimensional bordism with corners is assigned an invertible value under $\cZ$. For this end let $M_0$ and $M_1$ be closed $(d-2)$-manifolds, let $\Sigma_0$ and $\Sigma_1$ be $(d-1)$-dimensional bordisms from $M_0$ to $M_1$, and let $W$ be a $d$-dimensional bordism with corners from $\Sigma_0$ to $\Sigma_1$. 
		We may choose a Morse function $f:W \to [0,1]$ with critical points only on the interior of $W$, having distinct critical values, and with $f^{-1}(i) = \Sigma_i$ for $i=0,1$. By slicing in between the critical values, we observe that $W$ is a composite of $d$-dimensional bordisms with corners which have only a single critical point. Thus it is sufficient to prove that such bordism are invertible. 
		
	Every such bordism $W$ (a $d$-dimensional bordism with corners which has only a single Morse critical point) is given by a `handle attachment'. Specifically let $p+q = d-2$, with $p,q \geq -1$.  We may regard $S^p \times D^{q+1}$ and $D^{p+1} \times S^q$ as $(d-1)$-dimensional cobordisms from $S^p \times S^q$ to the empty manifold $\emptyset$. We may also regard $D^{p+1} \times D^{q+1}$ as a $d$-dimensional bordism with corners from $S^p \times D^{q+1}$ to $D^{p+1} \times S^q$.  
	
	For every $d$-dimensional bordism with corners with a single Morse critical point there exists a pair of integers $p$ and $q$ with $p+q = d-2$, $p,q \geq -1$, and a $(d-1)$-dimensional bordism $\Sigma$ from $M_0$ to $M_1 \sqcup S^p \times S^q$, such that:
	\begin{align*}
		\Sigma_0 &= (M_1 \times I \sqcup S^p \times D^{q+1}) \circ \Sigma \\
		\Sigma_1 &= (M_1 \times I \sqcup D^{p+1} \times S^q) \circ \Sigma \\
		W &= (M_1 \times I^2 \sqcup D^{p+1} \times D^{q+1})\circ \Sigma \times I
	\end{align*}
	This decomposition is possible because our bordism category has codimension-two corners, the same as handles. From this decomposition we see that it is sufficient to show that each handle bordism $D^{p+1} \times D^{q+1}$ is assigned an invertible value. 
		
However handles may be canceled. Specifically consider an index $(p-1)$-handle: 
\begin{equation*}
	D^{p} \times D^{q+2}: S^{p-1} \times D^{q+2} \to D^p \times S^{q+1}
\end{equation*}
We may `whisker' this with the bordism $D^p \times S^{q+1}$, thought of as a morphism from $\emptyset$ to $S^{p-1} \times S^{q+1}$. The result is a bordism
\begin{equation*}
	H_{p-1}: S^{d-1} \cong D^p \times S^{q+1} \cup_{S^{p-1} \times S^{q+1}} S^{p-1} \times D^{q+2} \to  D^p \times S^{q+1} \cup_{S^{p-1} \times S^{q+1}} D^p \times S^{q+1} \cong S^p \times S^{q+1}.
\end{equation*}
Since $D^p \times S^{q+1}$ is assigned an invertible 1-morphism, the $(p-1)$-handle is assigned an invertible 2-morphism if and only if $H_{p-1}$ is assigned an invertible 2-morphism. 

Similarly consider the $p$-handle: 
\begin{equation*}
	D^{p+1} \times D^{q+1}: S^p \times D^{q+1} \to D^{p+1} \times S^q.
\end{equation*}
We may `whisker' this with the bordism $S^p \times D^{q+1}$, thought of as a bordism from $\emptyset$ to $S^p \times S^q$ to obtain a bordism:
\begin{equation*}
	H_p: S^p \times S^{q+1} \cong (S^p \times D^{q+1}) \cup_{S^p \times S^q} S^p \times D^{q+1} \to (S^p \times D^{q+1}) \cup_{S^p \times S^q} D^{p+1} \times S^q \cong S^{d-1}.
\end{equation*}
Again the $p$-handle will be assigned an invertible value if an only if $H_p$ is assigned an invertible value. 

But the composite $H_p \circ H_{p-1}$ is the identity bordism of $S^{d-1}$ and since the source and targets of both $H_p$ and $H_{p-1}$ are assigned invertible values (by assumption) it follows from Lemma~\ref{lem:symmetric_mon_invert_mor} that both $H_p$ and $H_{p-1}$ are assigned invertible values. 
\end{proof}

\noindent This has an important corollary for fully-local field theories. 

\begin{corollary}\label{cor:invertblepointsareinvertible}
	In any oriented fully extended topological field theory, if the values of all zero-manifolds are invertible, then the field theory is invertible.  
\end{corollary}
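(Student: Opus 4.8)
The plan is to prove by upward induction on $k$ that $\cZ$ assigns an invertible value to every oriented $k$-manifold (possibly with corners) arising as a $k$-morphism in $\Bord^{SO}_d$, for $0 \le k \le d$; since the theory is fully extended, its invertibility is precisely the conjunction of these statements. The base case $k = 0$ is the hypothesis, together with the observation that the empty manifold, being the monoidal unit, is invertible. I set the convention that the statement for $k = -1$ is vacuously true.

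For the inductive step I would show that invertibility of all $(k-2)$- and $(k-1)$-manifolds forces invertibility of all $k$-manifolds, for $1 \le k \le d$. The key point is that the handle-theoretic argument proving Lemma~\ref{lem:moving_up} is not special to the top dimension $d$: because a fully extended theory records corners of every codimension, the Morse-theoretic decomposition of a $k$-dimensional bordism into single-critical-point pieces, and the presentation of each such piece as a handle $D^{p+1} \times D^{q+1}$ with $p + q = k - 2$ and $p,q \ge -1$, both take place inside $\Bord^{SO}_d$ at categorical level $k$. Re-running the whiskering construction of Lemma~\ref{lem:moving_up} verbatim, but with the ambient sphere $S^{d-1}$ replaced by $S^{k-1}$, produces two cancelling bordisms $H_p$ and $H_{p-1}$ with $H_p \circ H_{p-1} = \mathrm{id}_{S^{k-1}}$, all of whose sources, targets, and whiskering pieces are $(k-1)$-manifolds and hence invertible by the inductive hypothesis. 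Applying the weak $2$-categorical form of Lemma~\ref{lem:symmetric_mon_invert_mor} (the generalization to invertible parallel morphisms recorded in the Remark following it) inside the appropriate homotopy $2$-category extracted from $\cC$, the identity $H_p \circ H_{p-1} = \mathrm{id}$ together with the invertibility of all these sources and targets forces $H_p$ and $H_{p-1}$, and therefore each handle, to be invertible.

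The main obstacle I anticipate is organizational rather than conceptual: one must check that ``invertibility of a $k$-morphism'' in the fully extended setting is correctly detected by this $2$-categorical cancellation, i.e.\ that for each fixed pair of $(k-2)$-manifolds the $k$-morphisms between parallel $(k-1)$-manifolds assemble, via the fully extended structure and the homotopy-$2$-category construction, into a weak $2$-category to which the Remark applies, and that the handle decomposition and whiskering are compatible with the corner structure at every intermediate level (the low cases $k = 1$, where the handle indices involve $S^{-1} = \emptyset$ and $D^0 = \mathrm{pt}$, should be verified directly, as they recover invertibility of the cups and caps from invertibility of the points). A cleaner but heavier alternative would be to appeal to dimensional reduction, realizing the lower layers as top layers of auxiliary theories of smaller dimension so that Lemma~\ref{lem:moving_up} applies on the nose; this, however, uses machinery developed only later in the paper, so I would present the direct induction as the main line and note the reduction only as an alternative.
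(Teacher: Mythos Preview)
Your proposal is correct and follows essentially the same approach as the paper: the paper's proof is a terse two-sentence sketch that simply iterates Lemma~\ref{lem:moving_up} upward in dimension, and you have spelled out what ``applying the lemma again, one dimension higher'' means in practice. Your organizational worries about detecting $k$-morphism invertibility $2$-categorically are reasonable but the paper does not address them here either, treating the iteration as self-evident.
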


\begin{proof}
	The value of $-1$-dimensional manifolds (i.e. the empty manifold) and $0$-dimensional manifolds are invertible. Thus applying the previous lemma, we see that the value of each $1$-dimensional bordism is invertible. Applying the lemma again, one dimension higher, we see that the value of each $2$-dimensional bordism is invertible. Continuing in this way shows that every bordism is assigned an invertible value. 
\end{proof}

\section{The base case, oriented version} \label{sec:base_case_SO}
We will now establish a very special case of our main theorem. We will consider oriented 2-dimensional extended topological field theories $\cZ$ and show that if the value $\cZ(S^1)$ of the circle is invertible, then the entire field theory is invertible. For very particular target categories, this is an easy consequence of the classification in \cite{Schommer-Pries:2011aa}. However the following proof for oriented field theories with general target categories is due to Dan Freed and Constantin Teleman. 
We learned of it from Dan Freed's Aspect lecture \cite{Freed-Aspect}. In Section~\ref{sec:base_genstr} we will adapt these results to a slightly larger class of tangential structures.

\begin{figure}[htbp]
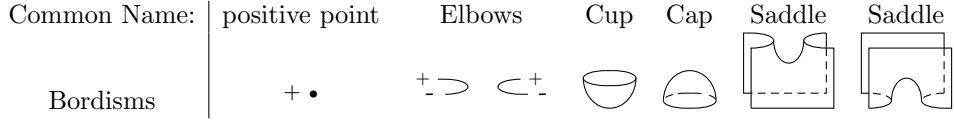

	\centering
		\begin{tabular}{c|cccccc}
			Common Name: & positive point & Elbows & Cup & Cap & Saddle & Saddle \\
			Bordisms & 
				\tikz{	\node at (-4, 1.5) {\footnotesize + {\tiny $\bullet$}};} &
				\tikz{
				\draw (-6.6, -.5) -- (-6.5, -.5) arc (90: -90: 0.3cm and 0.1cm);
				\draw (-5.5, -.5) arc (90: 270: 0.3cm and 0.1cm) -- (-5.4, -.7);
				\node at (-6.8, -.5) {\tiny +};
				\node at (-6.7, -.7) {-};
				\node at (-5.3, -.5) {\tiny +};
				\node at (-5.2, -.7) {-};
				} & 
				\tikz{	\draw (.1, 3) arc (90: -90: 0.3cm and 0.1cm) -- (0, 2.8);
	\draw (0, 3) arc (90: 270: 0.3cm and 0.1cm) ;
	\draw (0, 3) -- (.1, 3);
	\draw (-.3, 2.9) arc (180: 360: 0.35cm and 0.4cm);} & 
				\tikz{	\draw (.1, 2) -- (0,2);
	\draw (0,2) arc (270: 180: 0.3cm and 0.1cm);
	\draw [densely dashed] (0, 2.2) arc (90: 180: 0.3cm and 0.1cm); 
	\draw [densely dashed] (.1, 2.2) -- (0, 2.2);
	\draw (.1,2) arc (-90: 0: 0.3cm and 0.1cm);
	\draw [dashed] (0.1, 2.2) arc (90:0: 0.3cm and 0.1cm);
	\draw (-.3, 2.1) arc (180: 0: 0.35cm and 0.4cm);} &
				\tikz{	\draw (0, 3) arc (90: -90: 0.3cm and 0.1cm) -- (0, 2);
	\draw (1, 3) arc (90: 270: 0.3cm and 0.1cm) -- (1.1, 2.8) -- (1.1, 2) -- (0, 2);
	\draw (1,3) -- (1, 2.8); \draw [densely dashed] (1, 2.8) -- (1, 2.2) -- (0,2.2);
	\draw (.3, 2.9) arc (180: 360: 0.2cm and 0.3cm);
	\draw (0, 3) -- (-.1, 3) -- (-.1, 2.2) -- (0, 2.2); } &
				\tikz{\draw (2,2) arc (-90: 0: 0.3cm and 0.1cm);
	\draw [densely dashed] (2.3, 2.1) arc (0:90: 0.3cm and 0.1cm);
	\draw (2, 2.2) -- (1.9, 2.2) -- (1.9, 3) -- (3,3) -- (3, 2.8);
	\draw [densely dashed] (3, 2.8) -- (3, 2.2);
	\draw (2,2) -- (2, 2.8) -- (3.1, 2.8) -- (3.1, 2) -- (3,2);
	\draw (3,2) arc (270: 180: 0.3cm and 0.1cm);
	\draw [densely dashed] (3, 2.2) arc (90: 180: 0.3cm and 0.1cm); 
	\draw (2.3, 2.1) arc (180: 0: 0.2cm and 0.3cm);} 
		\end{tabular}
		
	\caption{Some 2-dimensional bordisms}
	\label{fig:2Dbordisms}
\end{figure}

Figure~\ref{fig:2Dbordisms} depicts some important 2-dimensional bordisms. Each point is given an orientation after stabilizing its (null) tangent bundle to the trivial rank-2 bundle. Thus, up to isomorphism, there are precisely two oriented points: the positive point and the negative point. These objects are dual in $\Bord^{SO}_2$, and the unit and counit of the adjunction between them are given by the `elbow' bordisms\footnote{actually we must compose one of the elbows by the `swap' bordism, which is the unique invertible bordism from $pt^+ \sqcup pt^-$ to $pt^- \sqcup pt^+$.}, depicted in Figure~\ref{fig:2Dbordisms}. The elbows, in turn, are also adjoint to each other. In fact they are ambidexterously adjoint (form both a left and right adjunction). The cup, cap, and saddles provide the units and counits for these adjunctions.

\begin{proposition}\label{pro:basecase}
	Let $\cZ: \Bord_2^{SO} \to \cC$ be an oriented extended topological field theory. Then $\cZ$ is invertible if and only if $\cZ(S^1)$ is invertible.  
\end{proposition}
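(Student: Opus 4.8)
The forward implication is trivial: an invertible theory assigns invertible values to every manifold, in particular to the circle. So assume $\cZ(S^1)$ is invertible and aim to show $\cZ$ is invertible. Specializing Lemma~\ref{lem:moving_up} to $d=2$, it suffices to prove that $\cZ$ sends every $0$-manifold to an invertible object and every $1$-dimensional bordism to an equivalence $1$-morphism; the lemma then automatically upgrades invertibility to all surfaces. Up to orientation there are just two points $pt^{\pm}$, which are dual objects, and every connected $1$-bordism is either a circle (controlled by the hypothesis), an identity interval, or one of the elbows witnessing the duality $pt^+\dashv pt^-$. Writing $X=\cZ(pt^+)$ and $X^\vee=\cZ(pt^-)$, invertibility of the points (which is the statement $X\otimes X^\vee\simeq 1$ via the coevaluation) and invertibility of all $1$-bordisms therefore both reduce to the single assertion that the two elbows are sent to equivalences.

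To obtain this, I would use that the two elbows are ambidextrously adjoint, with the cup, cap, and saddle $2$-morphisms serving as the units and counits of these adjunctions. By Lemma~\ref{lem:invertible_unit} it is then enough to show that these cup, cap, and saddle $2$-morphisms are sent to invertible $2$-morphisms. The cup and cap are where the hypothesis enters. The disk and pair-of-pants bordisms make $A:=\cZ(S^1)$ into a commutative Frobenius-algebra object in the symmetric monoidal category $\End_\cC(1)$, with unit $u\colon \mathrm{id}_1\Rightarrow A$ the cup and counit $\varepsilon\colon A\Rightarrow\mathrm{id}_1$ the cap. Since $A$ is invertible, so is $A\otimes A$, and the unit axiom $m\circ(u\otimes\mathrm{id}_A)=\mathrm{id}_A$ exhibits $\mathrm{id}_A$ as a composite factoring through $A\otimes A$. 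Lemma~\ref{lem:symmetric_mon_invert_mor} then forces both $u\otimes\mathrm{id}_A$ and the multiplication $m$ to be isomorphisms, and cancelling the invertible object $A$ shows $u$ itself is an isomorphism; the dual coalgebra argument handles $\varepsilon$ and the comultiplication. Thus the cup, the cap, and the closed saddles are invertible (and, incidentally, $A\simeq\mathrm{id}_1$ and $\cZ(S^2)=\varepsilon\circ u$ is invertible as well).

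The remaining, and hardest, step is the \emph{open} saddle: the counit $\varepsilon_1\colon e e^*\Rightarrow\mathrm{id}_{X\otimes X^\vee}$ of the elbow adjunction, whose invertibility is equivalent to point-invertibility itself. This is precisely the piece that cannot be produced by the cancellation lemma, because that lemma only redistributes invertibility among $2$-morphisms between $1$-morphisms \emph{already} known to be invertible, whereas $ee^*$ is exactly the $1$-morphism in question; every naive attempt via the triangle identities merely re-expresses a \emph{whiskered} copy of $\varepsilon_1$ and is circular. My plan is instead to express $\varepsilon_1$, through the zig-zag identities of the object-duality $pt^+\dashv pt^-$, as a vertical composite of whiskerings of the (now invertible) cup, cap, and closed saddles together with the invertible coherence isomorphisms of that duality. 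Since whiskering by an arbitrary $1$-morphism preserves invertibility of $2$-morphisms, and every building block is known to be invertible, this would force $\varepsilon_1$ to be invertible as well. Carrying out this identification is genuinely a calculation about the geometry of $2$-dimensional oriented bordisms—equivalently, it amounts to verifying that the extended structure forces the relevant Frobenius algebra to be special—and it is the step I expect to demand the most care. This is the argument of Freed and Teleman.

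Once $\varepsilon_1$ is shown invertible, Lemma~\ref{lem:invertible_unit} gives that both elbows are equivalences, the reduction of the first paragraph then yields invertibility of all points and all $1$-bordisms, and Lemma~\ref{lem:moving_up} completes the proof that $\cZ$ is invertible.
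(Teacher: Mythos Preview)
Your overall architecture matches the paper's exactly: reduce via Lemma~\ref{lem:moving_up} (the paper uses the equivalent Cor.~\ref{cor:invertblepointsareinvertible}) to invertibility of the points, then via Lemma~\ref{lem:invertible_unit} to invertibility of the elbows, then again via Lemma~\ref{lem:invertible_unit} to invertibility of the cup, cap, and saddle. Your Frobenius-algebra argument for the cup and cap is correct and is essentially the paper's argument rephrased: the paper observes that the annuli witnessing the self-duality of $S^1$ are invertible since $\cZ(S^1)$ is, and each annulus factors as pants $\circ$ disk, so Lemma~\ref{lem:symmetric_mon_invert_mor} makes both factors invertible.

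The gap is precisely where you locate it: the open saddle. Your proposed fix---expressing $\varepsilon_1$ as a vertical composite of whiskerings of cup, cap, and ``closed saddles'' via the object-duality zig-zags---is not carried out, and as stated it is not clear it can succeed: the open saddle is a $2$-morphism between $1$-morphisms with nonempty $0$-dimensional boundary, and there is no evident way to write it as whiskerings of purely closed-boundary pieces without reintroducing a copy of itself (the very circularity you correctly diagnose). The paper takes a different and concrete route: it shows directly that the \emph{reverse} saddle is a two-sided inverse to the saddle after applying $\cZ$. The key input is a single geometric identity (Figure~\ref{fig:Cylinder-Sphere-eqn}): the identity $2$-morphism on an elbow can be rewritten, in $\cC$, as a composite involving a cap, the inverse of $\cZ(S^2)$, and a cup. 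Since cup, cap, and sphere are already known to be invertible, this identity lets one simplify both composites saddle $\circ$ reverse-saddle and reverse-saddle $\circ$ saddle (Figures~\ref{fig:decomposition_coposedSaddles1} and~\ref{fig:decomposition_coposedSaddles2}) down to identities. This is the Freed--Teleman calculation you allude to; it is explicit and never requires decomposing the open saddle itself into closed pieces.
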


\begin{proof}
	The value $\cZ(S^1)$ is invertible in any invertible field theory; the more important implication is the converse. So we assume that $\cZ(S^1)$ is invertible. By Cor.~\ref{cor:invertblepointsareinvertible} it is sufficient to show that the value assigned to every zero manifold is invertible. Every zero manifold is a disjoint union of positive and negative points, which are dual to each other. Thus by Lemma~\ref{lem:invertible_unit} it is enough to show that the unit and counit of the duality between the positive and negative point, that is the elbow bordisms, are assigned invertible values. Up to composing with an invertible `swap' bordism, the elbow bordisms are adjoint, so again by Lemma~\ref{lem:invertible_unit} it is enough to show that the unit and counit of this adjunction, that is the cup and the saddle bordisms, are assigned invertible morphisms. 
	
We will first show that the cup bordism is assigned an invertible value. Observe that the circle is a self-dual object in $\Bord_2^{SO}$. The unit and counit of this self-duality are given by the annulus $S^1 \times [0,1]$, which can be read either as a bordism from the empty $1$-manifold to $S^1 \sqcup S^1$, or as a bordism the other way around.  Since $\cZ(S^1)$ is invertible, it follows from Lemma~\ref{lem:invertible_unit} that the values of each of these annuli are invertible. 

Each annulus can be written as a composite of a `pair-of-pants' bordism and a disk (cup or cap). By Lemma~\ref{lem:symmetric_mon_invert_mor} and the fact that $\cZ(S^1 \sqcup S^1) = \cZ(S^1) \otimes \cZ(S^1)$ is invertible, it follows that the values of both the disk and the pants are invertible, regardless of which direction these bordisms are read. Every 2-dimensional bordism between closed 1-manifolds can be obtained as composites of these, and hence the value of $\cZ$ on any  2-dimensional bordism between closed 1-manifolds is invertible. Note that a special case of this is the cup bordism. 

Now we will show that the saddle bordism is assigned an invertible value. We will show that the reverse saddle gives a two-sided inverse to the saddle, after applying the given field theory $\cZ$. The calculations are depicted in Figure~\ref{fig:decomposition_coposedSaddles1} and Figure~\ref{fig:decomposition_coposedSaddles2}, where these pictures are meant to be taking place in $\cC$ (after applying $\cZ$); The key step in both is the next to last, where we apply the identity depicted in Figure~\ref{fig:Cylinder-Sphere-eqn}.\end{proof}

\begin{figure}[htbp]
	\centering
		\includegraphics[scale=.24]{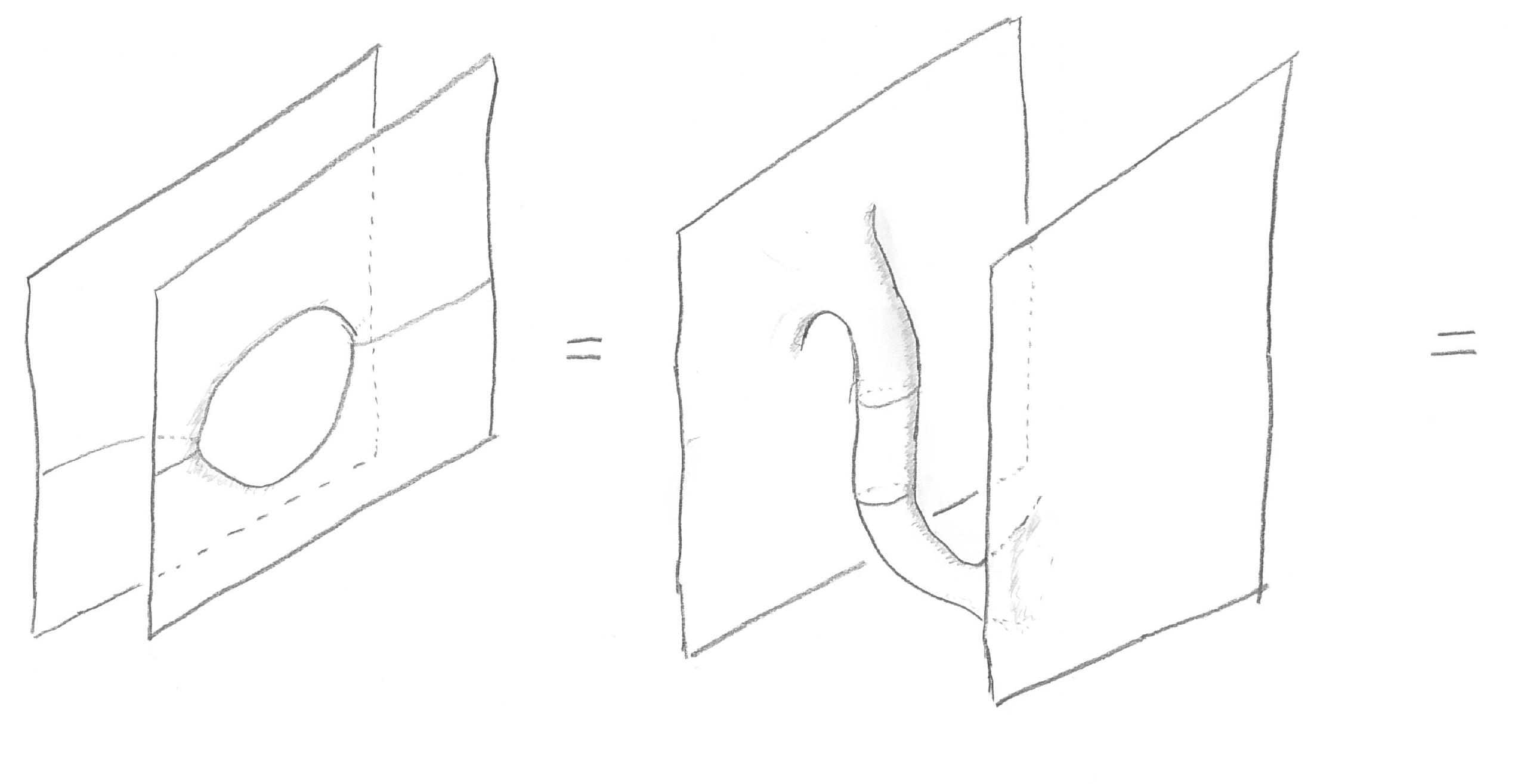}
		\includegraphics[scale=.22]{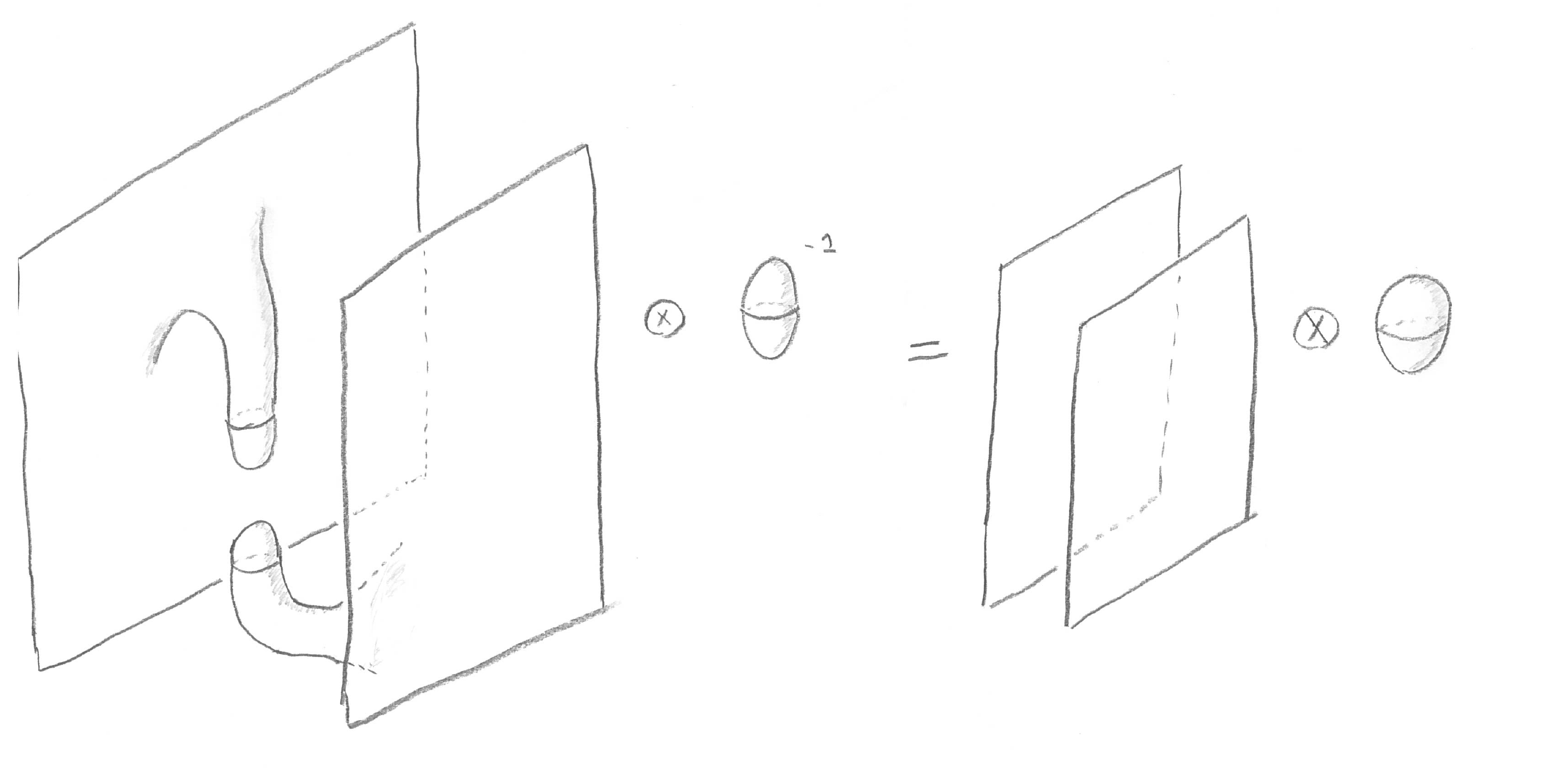}
	\caption{Invertiblity of the Saddle, one way.}
	\label{fig:decomposition_coposedSaddles1}
\end{figure}

\begin{figure}[htbp]
	\centering
		\includegraphics[scale=.4]{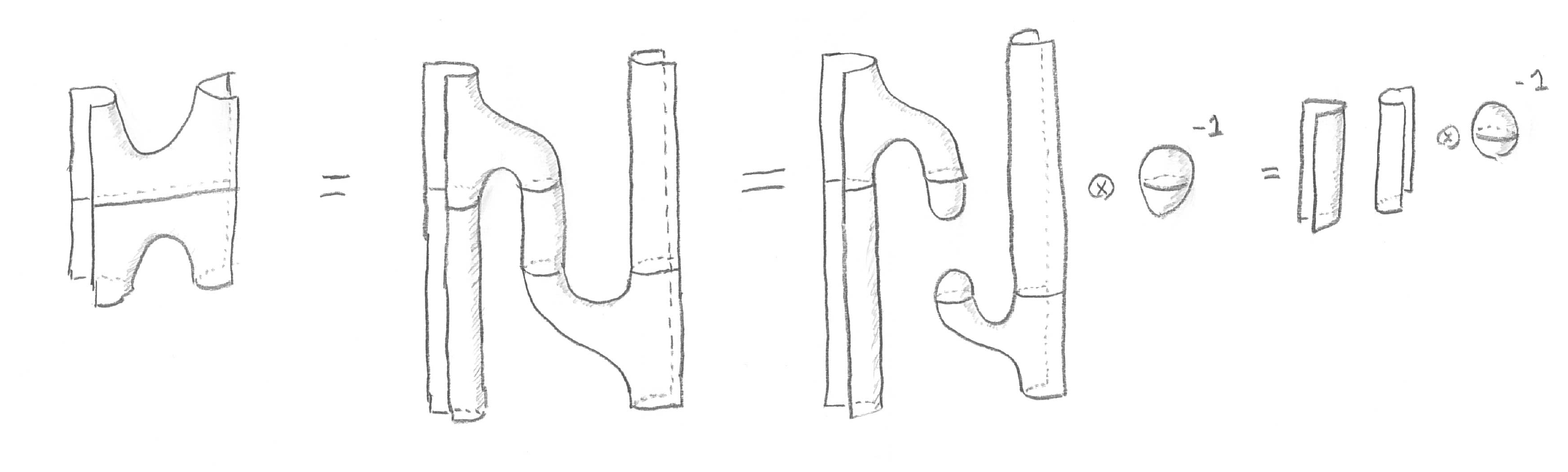}
	\caption{Invertiblity of the Saddle other way.}
	\label{fig:decomposition_coposedSaddles2}
\end{figure}

\begin{figure}[htbp]
	\centering
		\includegraphics[scale=.4]{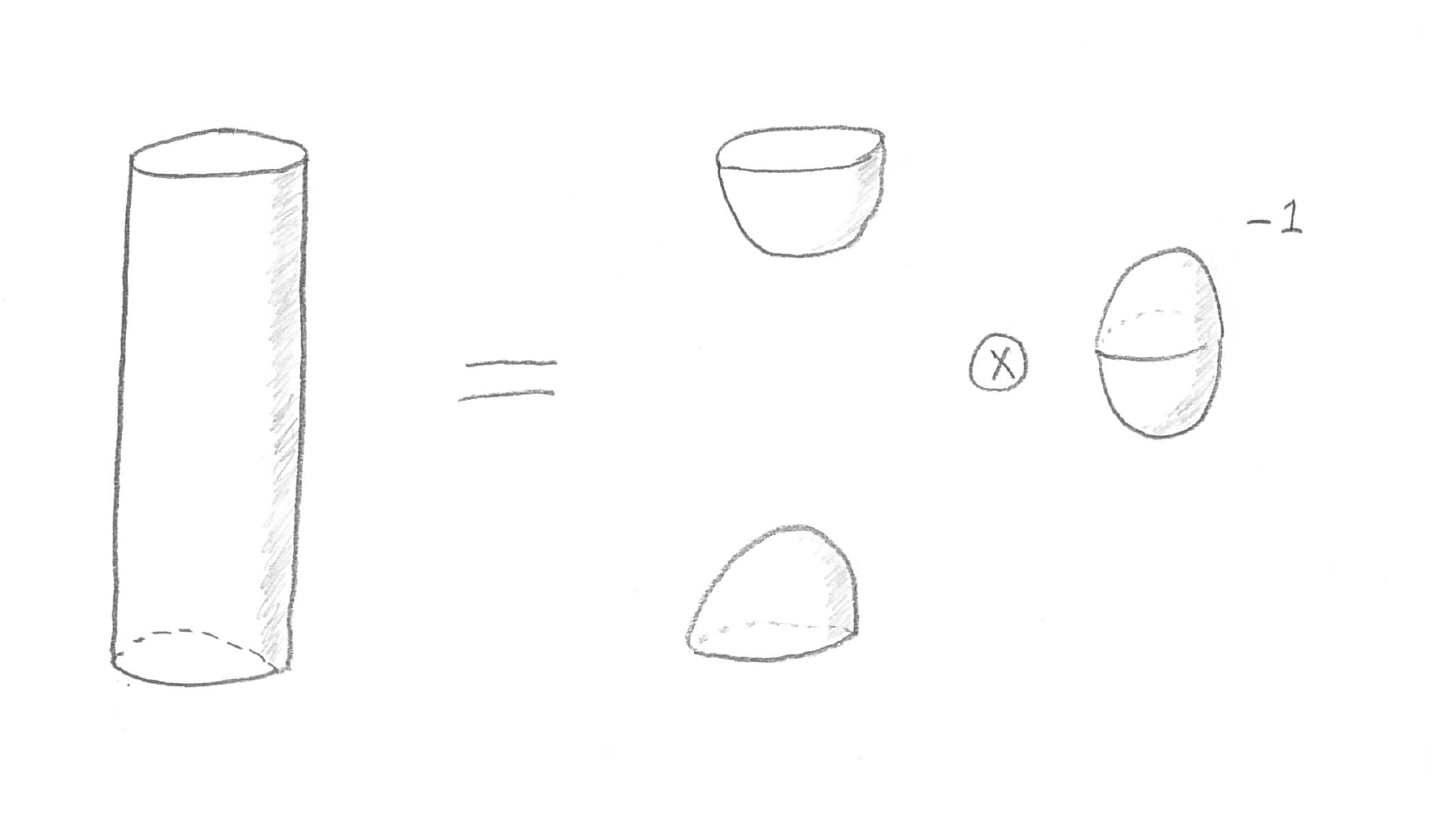}
	\caption{The cylinder in terms of a cup, cap, and inverse sphere.}
	\label{fig:Cylinder-Sphere-eqn}
\end{figure}

\section{The main theorem, oriented version} \label{sec:induct_SO}
We will now prove our main theorem in the oriented case. 

\begin{theorem}\label{thm:main_thm_SO}
	Let $\cZ: \Bord_d^{SO} \to \cC$ be a once-extended topological field theory.
	Then $\cZ$ is invertible if and only if the value of the $(d-1)$-torus $\cZ(T^{n-1})$ is invertible. 
\end{theorem}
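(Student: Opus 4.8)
The plan is to argue by induction on the dimension $d$, taking Proposition~\ref{pro:basecase} as the base case $d=2$ (where $T^{d-1}=S^1$, so the two statements coincide). Only the reverse implication needs work, since $\cZ(T^{d-1})$ is manifestly invertible whenever the whole theory is. For the inductive step I fix $d\geq 3$, assume the theorem in dimension $d-1$, and suppose $\cZ(T^{d-1})$ is invertible. The engine is dimensional reduction. Crossing with the circle is a symmetric monoidal functor $(-)\times S^1\colon \Bord^{SO}_{d-1}\to\Bord^{SO}_d$, and precomposing gives a once-extended $(d-1)$-dimensional theory $\cZ' := \cZ\circ((-)\times S^1)$. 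Because $\cZ'(T^{d-2})=\cZ(T^{d-2}\times S^1)=\cZ(T^{d-1})$ is invertible, the inductive hypothesis forces $\cZ'$ to be invertible. Concretely, $\cZ$ then takes invertible values on every manifold of the form $(\,\cdot\,)\times S^1$; in particular $\cZ(P\times S^1)$ is an invertible $1$-morphism for every closed $(d-2)$-manifold $P$ (regarded as a $1$-endomorphism of $\emptyset$ in $\Bord^{SO}_{d-1}$).

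By Lemma~\ref{lem:moving_up} it suffices to establish invertibility on the bottom two layers: that every closed $(d-2)$-manifold is sent to a $\otimes$-invertible object, and every $(d-1)$-bordism to an invertible $1$-morphism. The objects I would dispatch cleanly, and this is the step I am most confident about. For a fixed closed oriented $(d-2)$-manifold $N$, crossing with $N$ is again a symmetric monoidal functor $(-)\times N\colon \Bord^{SO}_2\to\Bord^{SO}_d$, so $\cZ_N:=\cZ\circ((-)\times N)$ is an oriented $2$-dimensional extended theory with $\cZ_N(S^1)=\cZ(S^1\times N)$. But $\cZ(S^1\times N)=\cZ(N\times S^1)$ is invertible by the circle reduction just performed, so Proposition~\ref{pro:basecase} applies to $\cZ_N$ and makes it invertible; in particular $\cZ_N(pt^+)=\cZ(N)$ is $\otimes$-invertible. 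As $N$ was arbitrary, every object is invertible.

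It remains to treat the $1$-morphisms, and this is where I expect the real difficulty. The natural strategy mirrors Lemma~\ref{lem:moving_up} one categorical level down: Morse-decompose an arbitrary $(d-1)$-bordism $\Sigma\colon M_0\to M_1$ into elementary bordisms, each a single surgery (handle attachment) between closed $(d-2)$-manifolds, and show each elementary piece is invertible. Since the source and target objects are now known to be invertible, Lemma~\ref{lem:symmetric_mon_invert_mor}, applied in the monoidal homotopy category (objects and equivalence classes of $1$-morphisms), reduces the invertibility of an elementary bordism $\Sigma_i$ to the invertibility of the composite $\bar\Sigma_i\circ\Sigma_i$ with its orientation-reverse $\bar\Sigma_i$.

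The main obstacle is precisely the invertibility of these composites. One cannot simply whisker into the standard handle on spheres and descend, as in Lemma~\ref{lem:moving_up}: the auxiliary half-handle bordisms $S^p\times D^{q+1}\colon\emptyset\to S^p\times S^q$ needed for such a reduction are \emph{not} invertible (that would force $\cZ(S^p\times S^q)\simeq 1$, which we do not yet know). What actually has to be controlled is the invertibility of the closed $(d-1)$-manifolds arising as doubles of handles — sphere products such as $S^p\times S^{q+1}$ and the sphere $S^{d-1}$ — together with the $d$-dimensional surgery traces relating them. I would establish these by an induction on handle index, seeded by the invertibility of $\cZ(S^{d-2}\times S^1)$ from the circle reduction and propagated along surgeries, treating the $1$- and $2$-morphism layers in tandem (using Lemma~\ref{lem:invertible_unit} to pass between an adjunction and its unit and counit). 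This interleaving is exactly where handle decompositions and the hypothesis $n\geq 2$ — which supplies the codimension-two corners that let handle moves be carried out inside $\Bord^{SO}_d$ — become indispensable. Once every $(d-1)$-bordism is invertible, Lemma~\ref{lem:moving_up} promotes invertibility to the top layer and closes the induction.
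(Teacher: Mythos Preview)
Your setup through the object layer is exactly the paper's argument: induct on $d$ with Proposition~\ref{pro:basecase} as base case, reduce along $S^1$ to make $\cZ(M\times S^1)$ invertible for every closed $(d-2)$-manifold $M$, then reduce along each $M$ and apply Proposition~\ref{pro:basecase} again to conclude $\cZ(M)$ is $\otimes$-invertible. So far so good.

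The gap is in the $1$-morphism layer. Your proposed reduction of an elementary bordism $\Sigma_i:A_i\to A_{i+1}$ to the composite $\overline{\Sigma}_i\circ\Sigma_i$ is sound in principle (Lemma~\ref{lem:symmetric_mon_invert_mor} applies once objects are invertible), but $\overline{\Sigma}_i\circ\Sigma_i$ is a bordism $A_i\to A_i$, not a closed $(d-1)$-manifold; so your identification of these doubles with ``sphere products such as $S^p\times S^{q+1}$'' is off, and the ensuing interleaved induction on handle index, mixing the $1$- and $2$-morphism layers via Lemma~\ref{lem:invertible_unit}, is both vague and unnecessary. The paper never touches the $2$-morphism layer until the very last line (Lemma~\ref{lem:moving_up}).

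What you are missing is a clean surgery lemma at the level of $1$-morphisms, together with a bordism-group trick. The paper argues as follows. With all objects already invertible, write a closed $N_1$ as $H_1\circ(N_1\setminus S^k\times D^{d-1-k})$ where $H_1=S^k\times D^{d-1-k}:S^k\times S^{d-2-k}\to\emptyset$; by Lemma~\ref{lem:symmetric_mon_invert_mor} both factors are invertible if $\cZ(N_1)$ is. The other half-handle $H_2=D^{k+1}\times S^{d-2-k}$ embeds in any nonempty $N_1$, so $\cZ(H_2)$ is invertible too, and hence $\cZ(N_2)$ is invertible whenever $N_2$ is obtained from $N_1$ by a single surgery. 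Starting from $T^{d-1}$ this gives invertibility for every closed $(d-1)$-manifold bordant to it; for arbitrary $N$ one uses that $N\sqcup\overline N$ is null-bordant (hence bordant to $T^{d-1}$), so $\cZ(N)\otimes\cZ(\overline N)$ is invertible and therefore $\cZ(N)$ is. For a bordism $W$ with empty source, double it to a closed manifold and apply Lemma~\ref{lem:symmetric_mon_invert_mor}; for a general $W:M_1\to M_2$, tensor with $id_{\overline{M_1}}$ and bend the source away using the coevaluation $\emptyset\to \overline{M_1}\sqcup M_1$ to reduce to the empty-source case. No Morse decomposition of $(d-1)$-bordisms and no appeal to the $2$-morphism layer is needed; Lemma~\ref{lem:moving_up} then finishes.
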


\begin{proof}
We will prove this theorem by induction. The base case $d=2$ is Prop.~\ref{pro:basecase}. Thus we may assume, that the theorem statement holds for all dimensions $p < d$. We first consider the effect of dimensional reduction along the circle, i.e. precomposition with 
\begin{equation*}
	(-) \times S^1: \Bord_{d-1}^{SO} \to \Bord_d^{SO}
\end{equation*}
Let $\cZ_{S^1}$ denote the dimensionally reduced theory. Thus $\cZ_{S^1}(X) = \cZ(X \times S^1)$. As a $(d-1)$-dimensional theory we have that $\cZ_{S^1}(T^{d-2}) = \cZ(T^{d-2} \times S^1 ) = \cZ(T^{d-1})$ is invertible. Thus, by our induction hypothesis, the entire theory $\cZ_{S^1}$ is an invertible theory. It follows that for every closed oriented $(d-2)$-manifold $M$, we have $\cZ_{S^1}(M) = \cZ(M \times S^1)$ is invertible. 

Now we will consider each oriented $(d-2)$-manifold $M$ separately and contemplate dimensional reduction along $M$, i.e. precomposition with 
\begin{equation*}
	M \times (-): \Bord_{2}^{SO} \to \Bord_d^{SO}.
\end{equation*}
Let $\cZ_M$ denote this dimensionally reduced theory. Since $\cZ_M(S^1) = \cZ(M \times S^1)$ is invertible, by Prop.~\ref{pro:basecase} the whole theory $\cZ_M$ is invertible. Hence $\cZ_M(pt) = \cZ(M)$ is invertible. In particular we have now shown that the value of $\cZ$ on every $(d-2)$-manifold is invertible. 

We will now consider when the value of $\cZ$ on closed $(d-1)$-manifolds is invertible. We first establish a lemma:

\begin{lemma}\label{lem:surgery_invertibility}
	Let $\cZ: \Bord_d^{SO} \to \cC$ be a once-extended topological field theory such that for each $(d-2)$-manifold $M$, $\cZ(M)$ is invertible. Suppose that the closed $(d-1)$-manifold $N_2$ is obtained from $N_1$ by surgery along an embedded $S^k \times D^{d-1-k}$. Suppose that $N_1$ is non-empty and $\cZ(N_1)$  is invertible, then $\cZ(N_2)$ is invertible. 
\end{lemma}

	Consider $N_1 \setminus S^k \times D^{d-1-k}$ as a $(d-1)$-dimensional bordism from $\emptyset$ to $S^k \times S^{d-2-k}$. Let $H_1$ be $S^k \times D^{d-1-k}$ viewed as a bordism from $S^k \times S^{d-2-k}$ to $\emptyset$. Then the composition yields:
	\begin{equation*}
		H_1 \circ (N_1 \setminus S^k \times D^{d-1-k}) \cong N_1
	\end{equation*}
	and hence 
	\begin{equation*}
		 \cZ(H_1) \circ  \cZ(N_1 \setminus S^k \times D^{d-1-k}) \cong  \cZ(N_1).
	\end{equation*}
	Since $\cZ(N_1)$ is invertible and the sources and targets of these morphisms are invertible by assumption, it follows from Lemma~\ref{lem:symmetric_mon_invert_mor}, that both $\cZ(H_1)$ and $ \cZ(N_1 \setminus S^k \times D^{d-1-k})$ are invertible. 
		
	Now let $H_2$ be $D^{k+1} \times S^{d-2-k}$ viewed as a bordism from $S^k \times S^{d-2-k}$ to $\emptyset$. Since $N_1$ is non-empty, we can choose an embedding of $D^{k+1} \times S^{d-2-k}$ into $N_1$ (for example such an embedding exists in a sufficiently small ball). The same argument as above shows that $\cZ(H_2)$ is invertible as well. But now we have (by the definition of surgery)
	\begin{equation*}
		N_2 \cong H_2 \circ (N_1 \setminus S^k \times D^{d-1-k})
	\end{equation*}
	which implies that $\cZ(N_2)$ is a composite of two invertible morphisms, hence invertible. This establishes the lemma.  
	
Returning to the proof of Theorem~\ref{thm:main_thm_SO}, we know that $\cZ(T^{d-1})$ is invertible by assumption. By repeatedly applying the above Lemma~\ref{lem:surgery_invertibility}, this implies that if $N$ is a closed $(d-1)$-manifold which can be obtained from $T^{d-1}$ by repeated surgeries (i.e. such that $N$ is bordant to $T^{d-1}$) then $\cZ(N)$ is invertible. 

In general not every closed $(d-1)$-manifold $N$ is bordant to the $(d-1)$-torus (which itself is null-bordant). The obstruction to this is the class $[N] \in \Omega^{SO}_{d-1}$ in the oriented bordism group. However if $N$ is any oriented closed $(d-1)$-manifold, and $\overline{N}$ denotes the orientation reversal of $N$, then $[N \sqcup \overline{N}] = [N] - [N] = 0 \in \Omega^{SO}_{d-1}$, and hence $N \sqcup \overline N$ may be obtained from $T^{d-1}$ by repeated surgeries. It then follows from Lemma~\ref{lem:surgery_invertibility} that
\begin{equation*}
	\cZ(N \sqcup \overline N) \cong \cZ(N) \otimes \cZ(\overline{N}) \cong \cZ(\overline{N}) \otimes  \cZ(N)
\end{equation*}
is invertible, and hence both $\cZ(N)$ and $\cZ(\overline{N})$ are invertible. So we have just established that the value of $\cZ$ on every closed $(d-1)$-manifold is invertible. 

Next we note that if $W$ is a $(d-1)$-dimensional bordism such that either the source or the target is the empty manifold $\emptyset$, then $\cZ(W)$ is invertible. We see this as follows. Without loss of generality assume the source of $W$ is the empty manifold, and the target is $M$. Then $\overline{W} \cup_M W$ is a closed $(d-1)$-manifold and hence
\begin{equation*}
	\cZ(\overline{W} \cup_M W) \cong \cZ(\overline{W}) \circ \cZ(W)
\end{equation*}
 is invertible. Since the sources and targets of these morphisms are invertible, it again follows from Lemma~\ref{lem:symmetric_mon_invert_mor} that both $\cZ(\overline{W})$ and $\cZ(W)$	are invertible. 
	
Finally we will consider an arbitrary $(d-1)$-dimensional bordism $W$ from $M_1$ to $M_2$. Since $\cZ(\overline{M_1})$ is an invertible object, tensoring with $\cZ(\overline{M_1})$ is an equivalence, and hence preserves and reflects invertibility.  It follows that $\cZ(W)$ is invertible if and only if 
\begin{equation*}
	id_{\cZ(\overline{M_1})} \otimes \cZ(W) \cong \cZ(\overline{M_1} \times I) \otimes \cZ(W) \cong \cZ(\overline{M_1} \times I \sqcup W)
\end{equation*}
is invertible, where $\overline{M_1} \times I$ is the identity bordism of $\overline{M_1}$. But we can also view $\overline{M_1} \times I$ as a bordism from $\emptyset$ to $\overline{M_1} \sqcup M_1$, which we will denote simply $X$, to distinguish from the identity. Both $X$ and the composite 
\begin{equation*}
	(\overline{M_1} \times I \sqcup W) \circ X
\end{equation*}
are bordism with source the empty manifold $\emptyset$. Hence their values under $\cZ$ are invertible, and it follows that $\cZ(\overline{M_1} \times I \sqcup W)$, and hence also $\cZ(W)$, is invertible. 

So at last we have established that the value of $\cZ$ on every $(d-2)$-manifold and every $(d-1)$-dimensional bordism is invertible. The theorem now follows directly from Lemma~\ref{lem:moving_up}. 
\end{proof}

\section{Tangential structures} \label{sec:general_tangential}

Careful constructions of the bordism  higher category \cite{MR2764873, scheimbauer-thesis} construct it using bordisms embedded into a large or infinite dimensional Euclidean space. In particular, for the $(\infty,n)$-categorical version, the top dimensional $d$-manifolds will be embedded into $\R^n \times \R^\infty$. The lower dimensional manifolds corresponding to lower dimensional morphisms of $\Bord_d$, will be embedded into $\R^k \times \R^\infty$ with $k \leq n$. 

Thus in the remainder of this paper we will tacitly assume that all our manifolds are embedded into these spaces as well. We will use the Grassmannian of $d$-planes $Gr_d(\R^n \times \R^\infty)$ as our model for the classifying space $BO(d)$. In particular since all of our $d$-manifolds are embedded in $\R^n \times \R^\infty$ the tangent bundle yields a  Gauss map
\begin{equation*}
	\tau_M: M \to Gr_d(\R^n \times \R^\infty) \simeq BO(d)
\end{equation*}
which is well-defined at the point-set level, not just up to homotopy. Moreover, our lower dimensional manifolds (say of dimension $d-k$) have well-defined stabilization Gauss maps
\begin{equation*}
	(\tau_y \oplus \varepsilon^k): Y \to Gr_d(\R^n \times \R^\infty) \simeq BO(d).
\end{equation*}
These maps will be used to define tangential structures at the point-set level. 

A space $\xi: X \to BO(d)$ over the classifying space $BO(d)$ determines a type of tangential structure for manifolds of dimension $\leq d$. If $\xi:X \to BO(d)$ is a fibration than a tangential structure for $M$ is given by a lift $\theta$
\begin{center}
\begin{tikzpicture}
		\node (LB) at (0, 0) {$M$};
		\node (RT) at (2, 1.5) {$X$};
		\node (RB) at (2, 0) {$BO(d)$};
		\draw [->, dashed] (LB) -- node [above left] {$\theta$} (RT);
		\draw [->>] (RT) -- node [right] {$\xi$} (RB);
		\draw [->] (LB) -- node [below] {$ $} (RB);
\end{tikzpicture}
\end{center}
Tangential structures for lower dimensional manifolds are defined the same way, using the stabilized Gauss map. 

The natural notion of equivalence of $(X,\xi)$-structure is that of \emph{isotopy}, which means homotopy over the space $BO(d)$. We will write $\theta_0 \simeq \theta_1$ when $\theta_0$ is isotopic to $\theta_1$. Manifolds with isotopic $(X,\xi)$-structures are equivalent in $\Bord_d^{(X,\xi)}$. Moreover a commutative triangle
\begin{center}
\begin{tikzpicture}
		\node (LT) at (0, 1.5) {$X$};
		\node (LB) at (1, 0) {$BO(d)$};
		\node (RT) at (2, 1.5) {$X'$};
		\draw [->] (LT) -- node [left] {$\xi$} (LB);
		\draw [->] (LT) -- node [above] {$\sim$} (RT);
		\draw [->] (RT) -- node [right] {$\xi'$} (LB);
\end{tikzpicture}
\end{center}
in which $X \to X'$ is a homotopy equivalence induces, for each $M$, a bijection between isotopy classes of $(X, \xi)$-structures on $M$ and $(X', \xi')$-structures on $M$. 

If $\xi: X \to BO(d)$ is not a fibration, then it is customary to replace it by one. For example we can replace it by $X \times_{BO(d)} PBO(d)$. Here $PY$ denotes the free path space on $Y$. Tangential structures are then defined using the replacement. For example with the suggested choice of replacement a tangential structure is a diagram as above, but where the triangle only commutes up to a specified homotopy. 

There are many examples of tangential structures (we omit $\xi$ when its clear from context):
\begin{itemize}
	\item $X = BSO(d)$, orientations;
	\item $X= BSpin(d)$, spin structures;
	\item $X = EO(d)$ or $pt$,  tangential framings (here $EO(d)$ will mean the frame bundle of the tautological bundle over $Gr_d(\R^n \times \R^\infty)$);
	\item $X = O/O(d) = \hofib( BO(d) \to BO)$, stable framings;
	\item $X = BG \times BO(d)$, via $\xi =$ projection,  $G$-principal bundles;
	\item $X = K \times BO(d)$, via $\xi =$ projection, maps to $K$;
	\item etc. 
\end{itemize}
In particular we will not assume that $X$ is connected\footnote{If $X$ does happen to be connected, then there exists a topological group $G$ with $X \simeq BG$ and $\xi$ is induced from a homomorphism of topological groups $G \to O(n)$. In that case $(X, \xi)$-structures can be interpreted as lifts of the structure group of the tangent bundle to $G$.}.

Assume that $\xi$ is a fibration. Given a point $x \in X$, we may chose a point in $EO(d)$ which maps to the image of $x$ in $BO(d)$. We get a commutative diagram:
\begin{center}
\begin{tikzpicture}
		\node (LT) at (0, 1.5) {$pt$};
		\node (LB) at (0, 0) {$EO(d)$};
		\node (RT) at (2, 1.5) {$X$};
		\node (RB) at (2, 0) {$BO(d)$};
		\draw [right hook->] (LT) -- node [left] {$\simeq$} (LB);
		\draw [->] (LT) -- node [above] {$ $} (RT);
		\draw [->>] (RT) -- node [right] {$ $} (RB);
		\draw [->] (LB) -- node [below] {$ $} (RB);
		\draw [->, dashed] (LB) -- node [below] {$ $} (RT);
\end{tikzpicture}
\end{center}
Since the left-hand arrow is an an acyclic cofibration and the right-hand side is a fibration, we may choose a diagonal lift as indicated in the diagram. These choices thus yield a map $x_*$ from tangential framings to $(X,\xi)$-structures.  

We will use this map to phrase the conditions of our results for field theories with arbitrary tangential structure. Up to isotopy of $(X,\xi)$-structures, the map $x_*$ is independent of the above choices and only depends on the component of $x$ in $X$. Thus if $\cZ: \Bord^{(X, \xi)}_d \to \cC$ is a field theory and $\theta$ denotes a  $d$-framing of the torus $T^{d-1}$, then the invertibility of $\cZ(T^{d-1}, x_* \theta)$ is independent of the above choices (and only depends on the component of $x \in X$).

\section{Moving-up, general tangential structures} \label{sec:moving_up_general}

The first result we need to generalize is Lemma~\ref{lem:moving_up}. We will use the notation from the proof of that lemma, and we suggest that the reader re-read the proof of Lemma~\ref{lem:moving_up} before continuing. The first part of the proof of this lemma works without change even in the presence of tangential structures. The $d$-dimensional bordism $W$ with corners still admits the necessary Morse function and can be written as a composite of handle attachements. Each handle is now equipped with an $(X, \xi)$-structure, which is inherited from the ambient manifold $W$. Thus it is enough to show that each handle, equipped with an $(X, \xi)$-structure is mapped to an invertible morphism. 

Moreover the basic philosophy for why these handles should map to invertible morphism, namely that they can be canceled, remains the same. However the specific argument must be modified slightly. The problem is that the bordism $H_{p-1}$ may not admit an $(X, \xi)$-structure extending the one on the given $(p-1)$-handle. 

We will modify the construction slightly. First in the construction of $H_{p-1}$, instead of whiskering with the bordism $D^p \times S^{q+1}$, we whisker with the bordism
\begin{equation*}
	(D^p \times S^{q+1} \setminus D^{d-1}): S^{d-2} \to S^{p-1} \times S^{q+1}.
\end{equation*}
This is the same as in Lemma~\ref{lem:moving_up}, except that we have removed a disk from this bordism. 
Let us call the result $H_{p-1}'$. Similarly in the construction of $H_p$ we instead whisker with 
\begin{equation*}
	(S^p \times D^{q+1} \setminus D^{d-1}): S^{d-2} \to S^p \times S^{q}.
\end{equation*}
We will call result $H_p'$. Now the composition yields:
\begin{equation*}
	H_p' \circ H_{p-1}' \cong (H_p \circ H_{p-1}) \setminus (D^{d-1} \times I) \cong (S^{d-1} \times I) \setminus (D^{d-1} \times I) \cong D^{d-1} \times I
\end{equation*}
that is, the identity bordism on $D^{d-1}$. 

The effect of these changes is that now the inclusion of the $(p-1)$-handle $D^p \times D^{q+2} \hookrightarrow H_p' \circ H_{p-1}'$ is a homotopy equivalence. Hence any $(X,\xi)$-structure on this $(p-1)$-handle can be extended to an $(X,\xi)$-structure on all of $H_p' \circ H_{p-1}'$, and hence on each of $H_p'$ and $H_{p-1}'$, as well as the $p$-handle. Now the proof proceeds precisely as before and yields:

\begin{lemma}\label{lem:moving_up_gentangential}
	Let $\cZ: \Bord^{(X,\xi)}_d \to \cC$ be a once-extended topological field theory such that $\cZ(Y)$ is invertible for each $(d-2)$-dimensional $(X, \xi)$-manifold $Y$, and $\cZ(\Sigma)$ is invertible for every $(d-1)$-dimensional $(X, \xi)$-bordism $\Sigma$. Then $\cZ$ is invertible. \qed
\end{lemma}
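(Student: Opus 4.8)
The plan is to recycle the proof of Lemma~\ref{lem:moving_up} as far as possible, modifying only the one step where the tangential structure creates an obstruction. The opening reduction transfers verbatim: given a $d$-dimensional $(X,\xi)$-bordism with corners $W$ from $\Sigma_0$ to $\Sigma_1$, I would choose a Morse function with critical points in the interior and distinct critical values, slice between consecutive critical values, and thereby express $W$ as a composite of single-critical-point bordisms, each of which is a handle attachment $D^{p+1}\times D^{q+1}$ (with $p+q=d-2$, $p,q\geq -1$). Each such handle inherits an $(X,\xi)$-structure by restriction from $W$, so it suffices to prove that every handle, equipped with its induced structure, is sent by $\cZ$ to an invertible $2$-morphism.

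The only genuinely new difficulty is the handle-cancellation step, and this is where I expect the main obstacle to lie. In the oriented proof one whiskers the handle to form bordisms $H_{p-1}$ and $H_p$ whose composite is $\mathrm{id}_{S^{d-1}}$, and then invokes Lemma~\ref{lem:symmetric_mon_invert_mor}. With tangential structure present, the prescribed $(X,\xi)$-structure on the given handle need not extend over the whiskered bordism $H_{p-1}$, so the cancellation diagram may fail to live in $\Bord_d^{(X,\xi)}$ at all.

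My fix is to whisker with \emph{punctured} caps: in place of $D^p\times S^{q+1}$ and $S^p\times D^{q+1}$ I would whisker with $D^p\times S^{q+1}\setminus D^{d-1}$ and $S^p\times D^{q+1}\setminus D^{d-1}$ respectively, obtaining modified bordisms $H'_{p-1}$ and $H'_p$. Puncturing changes the composite from the cylinder on $S^{d-1}$ to the cylinder on $D^{d-1}$, that is $H'_p\circ H'_{p-1}\cong \mathrm{id}_{D^{d-1}}$, which is still an identity bordism and hence invertible. The payoff is homotopy-theoretic: the inclusion of the $(p-1)$-handle $D^p\times D^{q+2}\hookrightarrow H'_p\circ H'_{p-1}$ is now a homotopy equivalence, so any $(X,\xi)$-structure on the handle extends over the entire composite and thence restricts to compatible $(X,\xi)$-structures on $H'_{p-1}$, on $H'_p$, and on the $p$-handle.

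With the structures in hand the argument closes exactly as in the oriented case: the sources and targets of $H'_{p-1}$ and $H'_p$ are $(d-2)$-manifolds and $(d-1)$-bordisms, whose values are invertible by hypothesis, while their composite is an identity $2$-morphism. Lemma~\ref{lem:symmetric_mon_invert_mor} then forces both $\cZ(H'_{p-1})$ and $\cZ(H'_p)$ to be invertible, and hence both the $(p-1)$-handle and the $p$-handle are assigned invertible values. Since every handle is therefore invertible, so is the value of $\cZ$ on every $W$, and $\cZ$ is invertible.
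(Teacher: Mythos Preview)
Your proposal is correct and essentially identical to the paper's own argument: you recycle the Morse--handle reduction of Lemma~\ref{lem:moving_up}, identify the same obstruction (the $(X,\xi)$-structure on the handle need not extend over $H_{p-1}$), and apply the same fix of whiskering with the punctured caps $D^p\times S^{q+1}\setminus D^{d-1}$ and $S^p\times D^{q+1}\setminus D^{d-1}$ so that the $(p-1)$-handle includes into $H'_p\circ H'_{p-1}\cong D^{d-1}\times I$ as a homotopy equivalence. The closing appeal to Lemma~\ref{lem:symmetric_mon_invert_mor} is likewise the same.
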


\noindent The proof of Cor.~\ref{cor:invertblepointsareinvertible} works identically in the presence of general tangential structures and yields:

\begin{corollary}\label{cor:invertblepointsareinvertible_gentangential}
	In any fully-extended topological field theory, if the values of all zero-manifolds are invertible, then the field theory is invertible.  \qed
\end{corollary}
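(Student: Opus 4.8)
The plan is to mimic the bootstrapping argument used for Corollary~\ref{cor:invertblepointsareinvertible}, substituting the tangential version Lemma~\ref{lem:moving_up_gentangential} for Lemma~\ref{lem:moving_up}. The key observation is that, although Lemma~\ref{lem:moving_up_gentangential} is phrased at the top of a once-extended theory, its proof is purely local to three consecutive dimensions: it uses only that the two layers immediately below a given layer of bordisms are assigned invertible values, together with the handle-cancellation construction (modified as in the discussion preceding Lemma~\ref{lem:moving_up_gentangential}) to force invertibility one dimension higher. This locality is what lets the argument be iterated inside a fully-extended theory.

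First I would establish the base of the induction. The empty $(-1)$-manifold is assigned the unit object, which is invertible, and by hypothesis every $0$-manifold equipped with its $(X,\xi)$-structure is assigned an invertible value. This furnishes two consecutive invertible layers.

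Next I would run the induction upward. Assuming that all $(X,\xi)$-manifolds of dimensions $k-2$ and $k-1$ are assigned invertible values, I would apply the moving-up argument of Lemma~\ref{lem:moving_up_gentangential} at that level to conclude that every $k$-dimensional $(X,\xi)$-bordism is also assigned an invertible value. Starting from $k=1$ and iterating through $k=d$ then shows that every manifold and bordism in $\Bord_d^{(X,\xi)}$ receives an invertible value, which is exactly the assertion that $\cZ$ is invertible.

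The one point requiring care---and the place where the general tangential structure genuinely matters---is verifying that the handle-cancellation bordisms used at each intermediate level carry $(X,\xi)$-structures compatible with those on the handles being canceled. This is precisely the content of the modification (whiskering with $D^{p}\times S^{q+1}\setminus D^{d-1}$ in place of $D^{p}\times S^{q+1}$, and similarly for the $p$-handle) introduced just before Lemma~\ref{lem:moving_up_gentangential}, which arranges that the relevant inclusions of handles are homotopy equivalences so that structures always extend. Since that modification applies verbatim at every level of the fully-extended theory, no new obstacle arises beyond the one already resolved, and the induction goes through.
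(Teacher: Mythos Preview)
Your proposal is correct and follows precisely the same approach as the paper: the paper simply remarks that the proof of Corollary~\ref{cor:invertblepointsareinvertible} works identically in the presence of general tangential structures (with Lemma~\ref{lem:moving_up_gentangential} in place of Lemma~\ref{lem:moving_up}), which is exactly the iterated moving-up argument you describe. Your additional remark about the $(X,\xi)$-structure extension is faithful to the modification discussed just before Lemma~\ref{lem:moving_up_gentangential}.
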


\section{Two-dimensional theories, general tangential structure} \label{sec:base_genstr}

\subsection{Some 2-framings} \label{sec:some2frames} In Lurie's formulation and proof of the bordism hypothesis \cite{lurie-tft} tangentially framed topological field theories play a key role. It is perhaps for this reason that they have received renewed interest in recent years. The existence and enumeration of framings for low dimensional manifolds is a classical algebraic topology problem. In the context of framed topological field theories, 2-framed bordisms and the 2-framed bordism category have been carefully discussed in \cite{Douglas:2013aa, Pstragowski:2014aa} and we refer the reader to these sources. 

However there is one special class of framings which we want to highlight: 2-framings on the circle. Up to 2-framed isomorphism (which includes isotopy of 2-framing) there are an integers worth of 2-framed circles. We can see this as follows. Suppose that $\theta$ and $\theta'$ are two 2-framings of a fixed circle (i.e. framings of $\tau_{S^1} \oplus \varepsilon$). The difference between these two framings is given by a map:
\begin{equation*}
	S^1 \to GL_2(\R) \simeq O(2)
\end{equation*}
and so up to homotopy the difference lies in $\{ S^1, O(2)\} = \Z \rtimes \Z/2$. Thus on any fixed circle, up to isotopy there are precisely $ \Z \rtimes \Z/2$ framings. The $\Z/2$-factor simply measures whether the two framings induce the same orientation. 

However, since the circle admits an orientation reversing diffeomorphism, the number of abstract framed circles is divided in half. In fact there is a \emph{canonical} bijection between    2-framed isomorphism classes of 2-framed circles and the integers. In otherwords each abstract 2-framed circle has an intrinsically defined integer associated to it. This is obtained as follows. The 2-framing $\theta$ of the abstract circle $Y$ induces and orientation of $\tau_Y$ (namely the orientation which makes the isomorphism $\tau_Y \oplus \varepsilon \cong \varepsilon^2$ orientation preserving, using the standard orientation of $\varepsilon$). Since $Y$ is a 1-manifold, an orientation of $\tau_Y$ is the same as a 1-framing, which may be stabilized to obtain a new 2-framing $\overline{\theta}$. Since $\theta$ and $\overline{\theta}$ induce the same orientation, the difference (up to homotopy) of these framings is an integer:
\begin{equation*}
	[\theta] - [\overline{\theta}] \in \Z \cong [Y, SO(2)]
\end{equation*} 
which is canonically associated to the 2-framing $\theta$\footnote{Note: the identification $[Y, SO(2)] \cong \Z$ also uses the orientation of $Y$ induced by $\Theta$.}. Thus we have well-defined 2-framings $\theta_k$ of $S^1$ for each $k \in \Z$ (they all produce the same underlying orientation of $S^1$). The 2-framing $\theta_0$ corresponds to the Lie group framing of $S^1 \cong U(1)$. 

Now consider a 2-framed bordism between 1-manifolds. There is a compatibility requirement between the 2-framing of the bordism and the 2-framing of the incoming/outgoing boundary. Namely they must agree. However to compare the two 2-framings we must choose a trivialization of the normal bundle of the boundary components. There are two possibile choices and these choices differ depending on whether the boundary is incoming or outgoing: for inward boundary components the normal bundle is trivialized using an inward pointing normal vector, while for outgoing boundary components it it trivialized using an outward pointing normal vector.  

The cup and the cap bordisms (see Figure~\ref{fig:2Dbordisms}) are contractible, hence admit unique 2-framings, up to isotopy. Restricting to the boundary gives us canonical 2-framings of the circle. 
However because in one case the circle boundary component is incoming while in the other it is outgoing, they induce distinct 2-framings on the circle. These are the 2-framings $\theta_{+1}$ and $\theta_{-1}$, respectively. 

\subsection{Spherophilia} \label{sec:spherophilia}
 We do not know if the statement of our main theorem holds for all tangential structures in dimension $d=2$. However it does hold for a large class of such structures, namely those in which the 2-sphere admits such a structure. We call such structures spherophilic, meaning `sphere-loving'.

\begin{definition}\label{def:spherophilic}
	Let $\xi:X \to BO(2)$ be a tangential structure for 2-manifolds. If $X$ is connected, the we say that $(X, \xi)$ is \emph{spherophilic} if the 2-sphere $S^2$ admits an $(X, \xi)$-structure. If $X$ is disconnected, then we say it is  \emph{spherophilic} if each component is spherophilic. 
\end{definition}

\begin{example}
	Tangential 2-framings are not spherophilic. Orientations, spin structures, and stable framings are spherophilic. An important example: 3-framings are spherophilic. Here a 3-framing means we first stabilize until the bundle is 3-dimensional and then frame it. 
\end{example}

\begin{lemma}\label{lem:spherophilia}
	Let $\xi: X \to BO(2)$ be a tangential structure for 2-manifolds. The following are equivalent:
	\begin{enumerate}
		\item $(X, \xi)$ is spherophilic;
		\item For each $[x_0] \in \pi_0X$, the image of $\pi_2(X,x_0)$ in $\pi_2BO(2) \cong \Z$ contains the even integers $2 \Z$;
		\item For each $[x] \in \pi_0X$ the $(X, \xi)$-structures $x_* \theta_{+1}$ and $x_*\theta_{-1}$ are isotopic. 
	\end{enumerate}
\end{lemma}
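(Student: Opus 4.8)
The statement is an equivalence of three conditions, so I would prove a cycle of implications, say $(1) \Rightarrow (2) \Rightarrow (3) \Rightarrow (1)$, choosing the order so that each step uses the most natural available input. The geometric content lives in $(1)$ (existence of a structure on $S^2$), the homotopy-theoretic content in $(2)$ (a condition on $\pi_2$), and the bordism-categorical content in $(3)$ (indistinguishability of the two canonical 2-framings on the circle, $\theta_{+1}$ and $\theta_{-1}$, after pushing forward along $x_*$). The bridge between all three is the long exact sequence of the fibration $\xi: X \to BO(2)$, together with the identification $\pi_2 BO(2) \cong \Z$ already recalled in the text.

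\textbf{The $(1) \Leftrightarrow (2)$ step.} Since the conditions are componentwise, I may assume $X$ is connected with basepoint $x_0$. An $(X,\xi)$-structure on $S^2$ is a lift of $\tau_{S^2}: S^2 \to BO(2)$ through $\xi$. The Gauss map $\tau_{S^2}$ is classified by an element of $\pi_2 BO(2) \cong \Z$, and the standard computation (the tangent bundle of $S^2$ has Euler number $2$) identifies this class with $2 \in \Z$. A lift exists if and only if this class lies in the image of $\xi_*: \pi_2(X,x_0) \to \pi_2 BO(2)$; here I would use the obstruction-theoretic or, more cleanly, the lifting-property description of when a map into the base of a fibration factors through the total space, given that $S^2$ is a sphere so only the single obstruction in $\pi_2$ of the fibre appears via the long exact sequence. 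Thus $S^2$ admits a structure precisely when $2$ is in the image of $\pi_2(X) \to \pi_2 BO(2)$. Since the image is a subgroup of $\Z$, containing $2$ is equivalent to containing all of $2\Z$, which is exactly $(2)$.

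\textbf{The $(2) \Leftrightarrow (3)$ step.} The two framings $\theta_{+1}$ and $\theta_{-1}$ on the circle were constructed in Section~\ref{sec:some2frames} as the boundary restrictions of the unique framings on the cap and cup; their difference, measured in $[S^1, SO(2)] \cong \Z$, is governed by the discussion identifying 2-framed circles with integers. After applying $x_*$, comparing $x_*\theta_{+1}$ with $x_*\theta_{-1}$ amounts to asking whether a specific loop's worth of framing difference can be trivialized within $X$, i.e.\ whether the relevant class in $\pi_1$ of the fibre of $\xi$ (equivalently, an element detected by the connecting map $\pi_2 BO(2) \to \pi_1(\text{fibre})$) dies in $X$. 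I would make this precise by expressing the isotopy class of $x_*\theta_{+1} \cdot (x_*\theta_{-1})^{-1}$ as the image under the connecting homomorphism $\partial: \pi_2 BO(2) \to \pi_1(\mathrm{fib}\,\xi)$ of the generator $2 \in \pi_2 BO(2)$, and then noting that this image vanishes in $\pi_0$ of the space of structures exactly when $2$ lifts to $\pi_2(X)$, which is condition $(2)$. The cleanest route is again the long exact sequence of $\xi$: the element $2 \in \pi_2 BO(2)$ lifts to $\pi_2 X$ if and only if $\partial(2) = 0$, and $\partial(2)$ is precisely the obstruction to isotoping $x_*\theta_{+1}$ to $x_*\theta_{-1}$.

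\textbf{Main obstacle.} The routine parts are the long-exact-sequence bookkeeping and the Euler-number computation identifying $\tau_{S^2}$ with $2$. The delicate step is $(2) \Leftrightarrow (3)$: I must verify that the \emph{difference class} between the two canonical framings $\theta_{+1}$ and $\theta_{-1}$ really corresponds to the generator $2 \in \pi_2 BO(2)$ (not $1$, and not $0$) under the connecting map, and that pushing forward along $x_*$ converts the lifting question into the isotopy question faithfully. This requires pinning down orientation and normal-bundle conventions from Section~\ref{sec:some2frames} — in particular the fact that $\theta_{+1}$ and $\theta_{-1}$ arise from an incoming versus outgoing boundary, which is what makes their difference the \emph{even} generator corresponding to capping off into $S^2$ rather than an odd class. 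I expect that carefully tracking this factor of two, and confirming it matches the Euler number appearing in the $(1) \Leftrightarrow (2)$ analysis, is where the real work lies.
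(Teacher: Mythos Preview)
Your proposal is correct, and your $(1) \Leftrightarrow (2)$ argument is essentially identical to the paper's: both use obstruction theory to identify the lifting obstruction with the image of the Euler class $e(S^2) = 2 \in \pi_2 BO(2)$.

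Where you diverge is in linking condition $(3)$ to the others. You go $(2) \Leftrightarrow (3)$ algebraically via the connecting homomorphism $\partial: \pi_2 BO(2) \to \pi_1(\mathrm{fib}\,\xi)$, arguing that the difference class of $x_*\theta_{+1}$ and $x_*\theta_{-1}$ is $\partial(2)$, which vanishes exactly when $2$ lifts to $\pi_2 X$. The paper instead proves $(1) \Leftrightarrow (3)$ by a direct geometric argument: the cup and cap bordisms are contractible, so each carries a unique $(X,\xi)$-structure with boundary circles $(S^1, x_*\theta_{+1})$ and $(S^1, x_*\theta_{-1})$; an isotopy between these two structures \emph{is} literally a cylinder, and gluing cup + cylinder + cap produces an $(X,\xi)$-structure on $S^2$. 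Conversely one cuts $S^2$ into cup, cylinder, and cap and reads the cylinder as an isotopy.

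The paper's route sidesteps exactly what you flagged as your main obstacle. You would need to verify that the framing difference $\theta_{+1} - \theta_{-1}$, pushed through $x_*$ and then through the fibration long exact sequence, lands on $\partial(2)$ rather than $\partial(1)$ or $\partial(0)$; this is true but requires unwinding the conventions of Section~\ref{sec:some2frames}. The geometric argument encodes the same factor of $2$ automatically (cup and cap together bound an $S^2$, whose Euler number is $2$), so no bookkeeping is needed. Your approach would go through, but the paper's is shorter and avoids the convention-tracking you were worried about.
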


\begin{proof}
	For simplicity we will consider the case where $X$ is connected. The case of many components only requires more bookkeeping. We let $x \in X$ be any point.  In this case we note that the cup and cap bordisms, being contractible, admit unique $(X, \xi)$-structures up to isotopy, which are induced (via $x_*$) from their unique 2-framings. Thus the boundaries of these cup and cap bordisms have $(X, \xi)$-structures given by  $x_* \theta_{+1}$ and $x_*\theta_{-1}$, respectively. If these are isotopic, then the isotopy itself may be read as an $(X, \xi)$ bordism between these circles. Composing this with the cup and cap gives an $(X, \xi)$-structure on the 2-sphere. Conversely, given an $(X, \xi)$-structure on the 2-sphere we may remove the cup and cap to obtain a cylindrical bordism between $(S^1,x_* \theta_{+1})$ and $(S^1,x_* \theta_{-1})$. This bordism in turn may be re-read as the isotopy between $x_* \theta_{+1}$ and $x_*\theta_{-1}$. This shows (1) $\Leftrightarrow$ (3).
	
	The equivalence (1) $\Leftrightarrow$ (2) follows by obstruction theory. There is a single primary obstruction to equipping $S^2$ with an $(X, \xi)$-structure which lives in 
	\begin{equation*}
		H^2(S^2; \coker(\pi_2 X \to \pi_2BO(2))).
	\end{equation*}
It may be identified with the image of the Euler class 
\begin{equation*}
	e(S^2) = 2 \in H^2(S^2; \pi_2BO(2)) \cong \Z,
\end{equation*}
and hence the primary obstruction vanishes if and only if $2 \Z$ is contained in the image of $\pi_2 X$ in $\pi_2 BO(2) \cong \Z$. 
\end{proof}

\begin{example} \label{Ex:restriction_spherophilia}
	Let $\xi: X \to BO(d)$ be a tangential structure for $d$-manifolds with $d>2$. Consider the tangential structure for 2-manifolds $(X_2, \xi_2)$ defined by the homotopy pull-back square:
	\begin{center}
	\begin{tikzpicture}
			\node (LT) at (0, 1.5) {$X_2$};
			\node (LB) at (0, 0) {$BO(2)$};
			\node (RT) at (2, 1.5) {$X$};
			\node (RB) at (2, 0) {$BO(d)$};
			\draw [->] (LT) -- node [left] {$\xi_2$} (LB);
			\draw [->] (LT) -- node [above] {$ $} (RT);
			\draw [->] (RT) -- node [right] {$\xi$} (RB);
			\draw [->] (LB) -- node [below] {$ $} (RB);
			\node at (0.5, 1) {$\ulcorner$};
	\end{tikzpicture}
	\end{center}
	Then $(X_2,\xi_2)$ is a spherophilic tangential structure. 
	
	An $(X_2,\xi_2)$-structure on a 2-manifold is an $(X,\xi)$-structure on that manifold after stablizing the tangent bundle with a rank $d-2$ trivial bundle. We can see that it is spherophilic by comparing the following portion of the long exact sequences in homotopy groups: (here $F$ is the homotopy fiber of $\xi: X \to BO(d)$):
	\begin{center}
	\begin{tikzpicture}
			\node (LT) at (0, 1.5) {$\pi_2 X_2$};
			\node (LB) at (0, 0) {$\pi_2 X$};
			\node (MT) at (3, 1.5) {$\Z \cong \pi_2 BO(2)$};
			\node (MB) at (3, 0) {$\Z / 2 \cong \pi_2 BO(d)$};

			\node (RT) at (6, 1.5) {$\pi_1 F$};
			\node (RB) at (6, 0) {$\pi_1 F$};

			\draw [->] (LT) -- node [left] {$ $} (LB);
			\draw [->>] (MT) -- node [right] {$ $} (MB);
			\draw [->] (RT) -- node [right] {$=$} (RB);
			\draw [->] (LT) -- node [above] {$ $} (MT);
			\draw [->] (MT) -- node [above] {$ $ } (RT);
			
			\draw [->] (LB) -- node [below] {$ $} (MB);
			\draw [->] (MB) -- node [below] {$ $} (RB);
			
	\end{tikzpicture}
	\end{center}
	a simple diagram chase shows that $2 \Z \subseteq im(\pi_2 X_2)$, and hence $(X_2,\xi_2)$ is spherophilic by Lemma~\ref{lem:spherophilia}.
\end{example}

\subsection{The base case with spherophilic tangential structures} \label{sec:spherophilic_base}

\begin{proposition}\label{pro:basecase_spherophilic}
	Let $\xi: X \to BO(2)$ be a spherophilic tangential structure for 2-manifolds, and let $\cZ: \Bord_2^{(X,\xi)} \to \cC$ be an extended topological field theory. Then $\cZ$ is invertible if and only if for each $x \in X$ we have $\cZ(S^1, x_*\theta_{1})$ is invertible.  
\end{proposition}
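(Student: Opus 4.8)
The strategy closely follows the oriented base case (Prop.~\ref{pro:basecase}), but with the spherophilic hypothesis doing exactly the work needed to repair the argument in the presence of the tangential structure. The plan is to reduce invertibility of the whole 2-dimensional theory to invertibility of the cup and saddle bordisms (via adjunctions), and then to exploit spherophilia to identify $x_*\theta_{+1}$ with $x_*\theta_{-1}$ so that the self-duality argument for the circle goes through. First I would observe that by Cor.~\ref{cor:invertblepointsareinvertible_gentangential} it suffices to show that the value on every $(X,\xi)$-structured point is invertible. As in the oriented case, each structured point is dualizable, and its dual is again a structured point; by Lemma~\ref{lem:invertible_unit} it is enough to show the units and counits of these dualities (the elbow bordisms, possibly composed with invertible swaps) are invertible. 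The elbows are themselves ambidextrously adjoint, so again by Lemma~\ref{lem:invertible_unit} it reduces to showing the cup and the saddle bordisms are assigned invertible values.

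The cup is handled via the self-duality of the circle. The key subtlety is that the circle $S^1$ now carries a \emph{2-framing} (equivalently an $(X,\xi)$-structure via $x_*$), and the annulus realizing the self-duality relates the two boundary framings $x_*\theta_{+1}$ and $x_*\theta_{-1}$ (the framings induced by the cup and cap respectively, as discussed in Section~\ref{sec:some2frames}). This is precisely where spherophilia enters: by Lemma~\ref{lem:spherophilia}(3), spherophilia is \emph{equivalent} to the statement that $x_*\theta_{+1} \simeq x_*\theta_{-1}$, so the structured circle $(S^1, x_*\theta_{+1})$ is genuinely self-dual as an object of $\Bord_2^{(X,\xi)}$, with the structured annuli serving as unit and counit. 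Since $\cZ(S^1, x_*\theta_{+1})$ is invertible by hypothesis, Lemma~\ref{lem:invertible_unit} shows both structured annuli have invertible values; writing each annulus as a composite of a pair-of-pants and a disk and applying Lemma~\ref{lem:symmetric_mon_invert_mor}, together with invertibility of $\cZ(S^1,x_*\theta_{+1})^{\otimes 2}$, shows the structured cup and pants have invertible values.

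For the saddle one runs the Freed--Teleman calculation of Prop.~\ref{pro:basecase} (Figures~\ref{fig:decomposition_coposedSaddles1}, \ref{fig:decomposition_coposedSaddles2}, \ref{fig:Cylinder-Sphere-eqn}), checking that the reverse saddle is a two-sided inverse. The one thing that must be verified is that every bordism appearing in that calculation carries a compatible $(X,\xi)$-structure and that the 2-sphere relation of Figure~\ref{fig:Cylinder-Sphere-eqn} holds structured — and here again spherophilia is exactly what guarantees that the relevant spheres and cylinders admit the required structures (the inverse sphere is meaningful precisely because $S^2$ admits an $(X,\xi)$-structure). The main obstacle, and the place where the spherophilic hypothesis is indispensable rather than cosmetic, is keeping track of which 2-framing each boundary circle inherits and confirming that the duality and adjunction data can all be given $(X,\xi)$-structures simultaneously; once $x_*\theta_{+1}\simeq x_*\theta_{-1}$ is in hand this bookkeeping collapses and the oriented argument transfers essentially verbatim.
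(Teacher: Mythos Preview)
Your proposal is correct and follows essentially the same route as the paper: reduce to invertibility of points via Cor.~\ref{cor:invertblepointsareinvertible_gentangential}, then to elbows via duality, then to cup and saddle via adjunction, handling the cup through the circle's duality and the saddle via the Freed--Teleman pictures.

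Two small points where the paper is more careful than your sketch. First, you invoke spherophilia to make $(S^1,x_*\theta_{+1})$ \emph{self}-dual, but this is not needed for the cup step: the annulus already exhibits an ordinary duality between $(S^1,x_*\theta_{+1})$ and $(S^1,x_*\theta_{-1})$ regardless of tangential structure, and invertibility of $\cZ(S^1,x_*\theta_{+1})$ alone forces the annulus, and hence the cup and pants, to be invertible. Spherophilia is genuinely used only in the saddle step, and the paper pinpoints its role precisely: for a general $(X,\xi)$ the left and right adjoints of a fixed left-elbow can be \emph{different} right-elbows, so the two saddles needed in Figures~\ref{fig:decomposition_coposedSaddles1}--\ref{fig:Cylinder-Sphere-eqn} need not be composable; spherophilia (via $x_*\theta_{+1}\simeq x_*\theta_{-1}$) forces the two adjoints to coincide, which is what makes the pictures parse. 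Second, the paper notes that for general $(X,\xi)$ the set of left-elbows (resp.\ right-elbows) with given boundary is a $\pi_1F$-torsor, not a singleton; one must observe that these differ by invertible $1$-morphisms, so it suffices to treat one adjoint pair. You implicitly assume the elbow is unique (``the elbows are themselves ambidextrously adjoint''), which is the oriented picture; this is easily patched but should be acknowledged.
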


\begin{proof}
	For spherophilic tangential structures the proof of Prop.~\ref{pro:basecase}, which is the oriented case, carries over with very minor changes. By Corollary~\ref{cor:invertblepointsareinvertible_gentangential} it is enough to show that each point with $(X, \xi)$-structure is given an invertible value under $\cZ$ (each $(X, \xi)$ 0-manifold is a disjoint union of these). 
	
	Let $F$ be the (homotopy) fiber of $\xi:X\to BO(2)$. The set of $(X,\xi)$-structures on the point is in bijection with $\pi_0F$. If $\pi_1 X \to \pi_1BO(2)$ is surjective this coincides with $\pi_0X$, otherwise it is two copies of  $\pi_0X$. In either case the set of  $(X,\xi)$-structures on the point is exhausted by $(pt, x_* \theta_+)$ and $(pt, x_*\theta_-)$ where $\theta_{\pm}$ denotes the positive/negative 2-framing of the point and $[x] \in \pi_0X$ ranges over all components of $X$. 

For each $[x] \in \pi_0 X$ 	the objects $(pt, x_* \theta_+)$ and $(pt, x_*\theta_-)$ are dual in $\Bord_2^{X, \xi}$, and the duality is witnessed via the elbow bordisms (which serve as the unit and counit). By Lemma~\ref{lem:invertible_unit} the points will take invertible values precisely if these elbow bordisms take invertible values under $\cZ$. 

As an abstract manifold each elbow is just a contractible disk, and so admits a unique $(X, \xi)$-structure for each component $[x] \in \pi_0X$. However as a bordism we must \emph{parametrize} the boundary and this means that as bordisms there may be multiple `left-elbows' and multiple `right-elbows' with the same source and target objects. In fact it is easy to see via obstruction theory  that set of `left-elbows' bordisms (respectively `right-elbows' bordisms) is a torsor over $\pi_1F$. 
Fortunately each of these elbows differs by composition with an invertible 1-morphism in $\Bord_2^{(X,\xi)}$ and so for questions of invertibility it is sufficient to show that any single pair of `left-elbow' and `right-elbow' takes an invertible value under $\cZ$, for then they all take invertible values. 
	
For this we will consider a pair of elbows which are adjoint to each other. The unit and counit of the adjunction are witnessed by a saddle and cup bordism. Again by Lemma~\ref{lem:invertible_unit} it is enough to show that these saddle and cup bordisms take invertible values under $\cZ$. 

Fix a left-elbow bordism. The inclusion of the left-elbow into the cup is a homotopy equivalence. Hence an $(X,\xi)$-structure on the left-elbow bordism extends to unique $(X, \xi)$-structure on the cup, which then restricts to an $(X,\xi)$-structure on the other half its circle boundary, a right-elbow. This determines an adjoint pair of left- and right-elbrows with $(X, \xi)$-structure. 

By assumption we know that the boundary of the cup bordism, $(S^1, x_* \theta_{1})$, takes an invertible value under $\cZ$. Thus the annulus, which witnesses the duality between $(S^1, x_* \theta_{1})$ and $(S^1, x_* \theta_{-1})$, is assigned an invertible value under $\cZ$. The annulus is a composite of the cup bordism and a particular pair of pants bordism. All the 1-morphisms which are sources and targets of these pants and cup bordisms are assigned invertible values under $\cZ$ and so by Lemma~\ref{lem:symmetric_mon_invert_mor}, it follows that both this pair of pants and the cup bordism are assigned invertible values under $\cZ$. The dual argument shows that the cap bordism is also assigned an invertible value. 

So all that remains is to show that the saddle bordism takes an invertible value. This is where we need to use the face that $(X, \xi)$ is a spherophilic tangential structure. For a general tangential structure the left and right adjoint of a fixed left-elbow bordism may be distinct. But for a spherohilic tangential structure they are necessarily the same (and the composite of these elbows into a circle yields the $(X,\xi)$-stricture $x_* \theta_1 \simeq x_* \theta_{-1}$ on the circle). Thus all of the saddle bordisms used in the caclulations depicted in Figures~\ref{fig:decomposition_coposedSaddles1} and \ref{fig:decomposition_coposedSaddles2} admit (unique) $(X, \xi)$-structures making them composable. Moreover the key identity, depicted in Figure~\ref{fig:Cylinder-Sphere-eqn} also holds, and so these computations remain valid when $(X, \xi)$ is spherophilic. 
\end{proof}

It would be interesting to know if the above result is sharp. As of this version of this paper we have been unable to decide either way and so offer a conjecture:

\begin{conjecture}
	There exists some symmetric monoidal bicategory $\cC$ and a 2-framed 2-dimensional topological field theory
	\begin{equation*}
		\cZ: \Bord_2^\text{fr} \to \cC
	\end{equation*}
	such that $\cZ(S^1, \theta_1)$ is invertible, but such that the $\cZ$ is \emph{not invertible}. 
\end{conjecture}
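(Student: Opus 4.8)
The plan is to reduce the construction to a single fully dualizable object via the framed cobordism hypothesis, and then to exhibit such an object whose value on $(S^1,\theta_1)$ is invertible while the object itself is not. Recall that a $2$-framed $2$-dimensional theory $\cZ\colon \Bord_2^{\mathrm{fr}} \to \cC$ is the same data as a fully dualizable object $c = \cZ(pt_+) \in \cC$. By Corollary~\ref{cor:invertblepointsareinvertible_gentangential} (together with its immediate converse) the theory $\cZ$ is invertible if and only if the object $c$ is invertible. Thus the conjecture asks for a symmetric monoidal bicategory $\cC$ containing a fully dualizable, non-invertible object $c$ whose associated value $\cZ(S^1,\theta_1)$ is nonetheless an invertible $1$-endomorphism of the unit.

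First I would make the value $\cZ(S^1,\theta_1)$ explicit. The two framings $\theta_{+1}$ and $\theta_{-1}$ on the circle differ by a generator of $\pi_1 SO(2) \cong \Z$, and the monodromy relating them is precisely the Serre automorphism $S_c$ of the fully dualizable object $c$. Concretely $\cZ(S^1,\theta_0)$ is the categorical dimension (the trace of the identity) of $c$, while $\cZ(S^1,\theta_1)$ is a Serre-twisted dimension, the trace of $S_c$ up to normalization. The essential point is that when the Serre automorphism is nontrivial these twisted dimensions genuinely differ from the untwisted one; this is the incarnation at the level of values of the failure of $2$-framings to be spherophilic (Def.~\ref{def:spherophilic}), namely the failure of $\theta_{+1} \simeq \theta_{-1}$ that obstructs the saddle step in the proof of Prop.~\ref{pro:basecase_spherophilic}.

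Next I would choose the target. In semisimple targets such as algebras and bimodules, or finite semisimple linear categories, full dualizability forces the Serre automorphism to be essentially trivial, so $\theta_{+1}$ and $\theta_{-1}$ give equal values and the twisted dimension is invertible if and only if $c$ is invertible; no counterexample can arise there, consistent with spherophilia. To decouple the two conditions one must pass to a target with genuinely nontrivial Serre automorphisms, for instance the Morita bicategory of smooth and proper dg-categories, where the Serre automorphism is the Serre functor of Serre duality. Here I would search for a smooth proper dg-category $\cA$ that is not Morita trivial (hence non-invertible) but whose Serre-twisted Hochschild homology is one-dimensional, so that $\cZ(S^1,\theta_1)$ is invertible in $\End_\cC(1) \simeq \Vect$.

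The main obstacle will be exhibiting and verifying this object: one must control the Serre-twisted trace precisely enough to see it is invertible while simultaneously certifying that the object is not invertible. Equivalently, and more structurally, the conjecture amounts to showing that the localization $\Bord_2^{\mathrm{fr}}[(S^1,\theta_1)^{-1}]$ obtained by formally inverting this one $1$-morphism is not a Picard groupoid. In the localization the cup becomes invertible by the argument of Prop.~\ref{pro:basecase_spherophilic} (circle invertible $\Rightarrow$ annulus invertible $\Rightarrow$ pair-of-pants and cup invertible, via Lemma~\ref{lem:symmetric_mon_invert_mor}), but the saddle step breaks down exactly because the identity of Figure~\ref{fig:Cylinder-Sphere-eqn} requires $\theta_{+1}\simeq\theta_{-1}$. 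The hard part is to produce a functor out of this localization, equivalently the explicit target above, that detects a surviving non-invertible morphism, thereby witnessing that inverting $(S^1,\theta_1)$ alone does not trivialize the theory.
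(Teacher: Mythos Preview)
The statement you are attempting to prove is a \emph{conjecture} in the paper, not a theorem: the authors explicitly write that they ``have been unable to decide either way'' and offer it as an open problem. There is therefore no proof in the paper to compare your proposal against.

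Your proposal is a coherent outline of a strategy, and the reformulations you give are sound: reducing via the cobordism hypothesis to a non-invertible fully dualizable object whose Serre-twisted dimension is invertible, or equivalently showing that the localization $\Bord_2^{\mathrm{fr}}[(S^1,\theta_1)^{-1}]$ is not a Picard $\infty$-groupoid. You also correctly identify where the argument of Proposition~\ref{pro:basecase_spherophilic} breaks down in the non-spherophilic setting. However, your proposal is not a proof: it stops exactly at the hard step, namely exhibiting a specific smooth proper dg-category (or any other fully dualizable object) that is not Morita invertible yet has invertible Serre-twisted Hochschild homology. You acknowledge this yourself (``I would search for\ldots'', ``The main obstacle will be\ldots''). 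Until such an object is produced and both conditions verified, the conjecture remains open, and what you have written is a research plan rather than a proof.
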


\section{Dimensional reduction, revisited} \label{sec:dimensional_redux}

One of the key techniques which we used to prove our main theorem in the oriented case was the technique of dimensional reduction. There are many versions of dimensional reduction, and while we only needed a simple form in the oriented case, we will need more complicated versions in the case of general tangential structures. 

\subsection{Basic dimensional reduction} \label{sec:dimredux_basic}

The simplest form of dimensional reduction which works for arbitrary tangential structures happens by taking the product with a tangentially framed manifold. Let $BO(k) \to BO(d)$ be the map induced by adding $d-k$ trivial line bundles, and let $X_{k}$ denote the pullback:
\begin{center}
\begin{tikzpicture}
		\node (LT) at (0, 1.5) {$X_{k}$};
		\node (LB) at (0, 0) {$BO(k)$};
		\node (RT) at (2, 1.5) {$X$};
		\node (RB) at (2, 0) {$BO(d)$};
		\draw [->] (LT) -- node [left] {$\xi_{k}$} (LB);
		\draw [->] (LT) -- node [above] {$ $} (RT);
		\draw [->] (RT) -- node [right] {$\xi$} (RB);
		\draw [->] (LB) -- node [below] {$ $} (RB);
		\node at (0.5, 1) {$\ulcorner$};
\end{tikzpicture}
\end{center}
It $(M, \theta)$ is a tangentially framed $(d-k)$-manifold and $(Y, \psi)$ is a $k$-manifold with an $(X_{k}, \xi_{k})$-structure, then the product $(Y \times M, \psi \times \theta)$ is naturally a $d$-manifold with an $(X, \xi)$-structure. This gives rise to a functor:
\begin{equation*}
	(-) \times (M, \theta): \Bord^{(X_{k}, \xi_{k})}_{k} \to \Bord_d^{(X, \xi)}
\end{equation*}
which can be used to preform dimensional reduction. 
Dually, the same construction also gives rise to a functor
\begin{equation*}
	(Y, \psi) \times (-): \Bord_{d-k}^\text{fr} \to \Bord_d^{(X, \xi)}
\end{equation*}
from the tangentially framed $(d-k)$-dimensional bordism higher category. 

These dual forms of dimensional reduction are actually part of a more general context. Consider the following situation. Suppose that $\xi_a: X_a \to BO(k)$ is a tangential structure for $k$-manifolds and $\xi_b:X_b \to BO(d-k)$ is a tangential structure for $(d-k)$-manifolds. Suppose further that we have a commutative diagram:
\begin{center}
\begin{tikzpicture}
		\node (LT) at (0, 1.5) {$X_a \times X_b$};
		\node (LB) at (0, 0) {$BO(k) \times BO(d-k)$};
		\node (RT) at (3, 1.5) {$X$};
		\node (RB) at (3, 0) {$BO(d)$};
		\draw [->] (LT) -- node [left] {$\xi_a \times \xi_b$} (LB);
		\draw [->] (LT) -- node [above] {$f$} (RT);
		\draw [->] (RT) -- node [right] {$\xi$} (RB);
		\draw [->] (LB) -- node [below] {$ $} (RB);
\end{tikzpicture}
\end{center}
Then we have a pairing. If $(Y, \psi)$ is a $y$-manifold with $(X_a, \xi_a)$-structure and $(M, \theta)$ is a $(d-k)$-manifold with $(X_b, \xi_b)$-structure, then $(Y \times M, f_*(\psi \times \theta))$ is a $d$-manifold with $(X, \xi)$-structure. This gives rise to functors:
\begin{align*}
	(-) \times (M, \theta): \Bord^{(X_a, \xi_a)}_{k} &\to \Bord_d^{(X, \xi)} \\
	(Y, \psi) \times (-): \Bord_{d-k}^{(X_b, \xi_b)} &\to \Bord_d^{(X, \xi)}.
\end{align*}
This was exactly the sort of dimensional reduction used in the oriented case where $(X_a,\xi_a)$ and $(X_b, \xi_b)$ both corresponded to the structure of orientations. 

\subsection{Total dimensional reduction} \label{sec:dimredux_total} There is another kind of dimensional reduction which we will need to use in order to prove our main theorem in the presence of general tangential structures. The basic dimensional reduction, described above, splits the problem of constructing an $(X, \xi)$-structure on $Y \times M$ into finding two different and separate tangential structures, one on $Y$ and one on $M$. However we don't need to separate these. The bare (unstructured) $(d-k)$-manifold $M$ defines a new kind of tangential structure $(X_M, \xi_M)$ for $k$-manifolds. An  $(X_M, \xi_M)$-structure on $Y$ is exactly an $(X, \xi)$-structure on $Y \times M$. 

To describe this new structure first fix a $(d-k)$-manifold $M$, and consider the induced fiber sequence:
\begin{equation*}
	F_M \to \Map(M, X) \to \Map(M, BO(d))
\end{equation*}
where the fiber is taken over the map $\tau_M \oplus \varepsilon^{k}$. The fiber $F_M$ is the `space of $(X, \xi)$-structures on $M$'. The components of $F_M$ are in natural bijection with isotopy classes of $(X, \xi)$-structures on $M$.

There is a map $BO(k) \times \Map(M, BO(d-k)) \to \Map(M, BO(d))$ which is induced by applying the direct sum map
\begin{equation*}
	BO(k) \times BO(d-k) \to BO(d)
\end{equation*}
pointwise in $M$. From this we can construct $\xi_M:X_M \to BO(k)$ via the following pullback square:
\begin{center}
\begin{tikzpicture}
		\node (LT) at (0, 1.5) {$X_M$};
		\node (LB) at (0, 0) {$BO(k) \times \{\tau_M\}$};
		\node (MB) at (5, 0) {$BO(k) \times \Map(M, BO(d-k))$};
		\node (RT) at (10, 1.5) {$ \Map(M, X)$};
		\node (RB) at (10, 0) {$ \Map(M, BO(d))$};
		\draw [->] (LT) -- node [left] {$\xi_M$} (LB);
		\draw [->] (LT) -- node [above] {$ $} (RT);
		\draw [->] (RT) -- node [right] {$ $} (RB);
		\draw [->] (LB) -- node [below] {$ $} (MB);
		\draw [->] (MB) -- node [below] {$ $} (RB);
		\node at (0.5, 1) {$\ulcorner$};
\end{tikzpicture}
\end{center}
A lift $\theta: T \to X_M$ over $\tau_Y: Y \to BO(k)$ is the same as a lift of $\tau_Y \oplus \tau_M: Y \times M \to BO(d)$ to $X$. 
This gives rise to a new dimensional reduction functor: 
\begin{equation*}
	(-) \times M: \Bord_k^{(X_M, \xi_M)} \to \Bord_d^{(X, \xi)}.
\end{equation*}

Note that there is a surjective map $\pi_0 F_M \to \pi_0 X_M$.  It is either a bijection or a two-to-one mapping, and hence each $(X, \xi)$-structure on $M$ singles out a component of $X_M$; every component is realized this way.

\subsection{A variation on total dimensional reduction along a circle.} \label{sec:dimredux_nullholonomic}

 Total dimensional reduction, described above constructs a new tangential structure $(X_M, \xi_M)$ from an initial tangential structure $(X, \xi)$ and a manifold $M$. However the new tangential structure can have many components if $M$ admits many $(X, \xi)$-structures. 
 
 For example consider the case $M=S^1$ of total dimensional reduction along a circle. Let us compute the number of components of $X_{S^1}$. Let $F$ be the homotopy fiber of the map $\xi:X \to BO(d)$. We have a long exact sequence:
 \begin{equation*}
 	\cdots \to \pi_2 BO(d) \to \pi_1 F \to \pi_1 X \to \pi_1 BO(d) \to \pi_0 F \twoheadrightarrow \pi_0 X
 \end{equation*} 
 The tangent bundle of $S^1$ is trivializable, and hence the stabilized map $\tau_{S^1} \oplus\varepsilon^{d-1}: S^1 \to BO(d)$ is null-homotopic. If we choose a null-homotopy of this map, then this gives us an identification of $F_{S^1}$ with $LF = \Map(S^1, F)$, the free loop-space. It then follows, from the long exact homotopy sequence for the fibration $F_{S^1} \to X_{S^1} \to BO(d-1)$, that we have a bijection:
 \begin{equation*}
 	\pi_0 X_{S^1} \cong \pi_0 X \times \pi_1 F.
 \end{equation*}
Thus the number of components of $X_{S^1}$ grows multiplicatively by a factor of size $\pi_1F$. 
We would like to describe a modification of the total dimensional reduction which will cut this number down, but still allow more flexibility than the basic dimensional reduction we have already seen. 

The identification $\pi_0 X_{S^1} \cong \pi_0 X \times \pi_1 F$ is not canonical, but depends on our choice of a null-homotopy of $\tau_{S^1} \oplus\varepsilon^{d-1}$. Two such null-homotopies differ by an element in $\pi_2 BO(d)$, and this may change the above identification by translation by the image of  $\pi_2 BO(d)$ in $\pi_1 F$. Thus we get an invariant of $(X, \xi)$-structures on the circle taking values in 
\begin{equation*}
	\pi_0 X \times (\pi_1F /\pi_2 BO(d))  \subseteq \pi_0 X \times \pi_1 X
\end{equation*}
This invariant may be read off from a given $(X, \xi)$-structure (which recall is a certain map $\theta: S^1 \to X$) by looking at the action on $\pi_0$ and $\pi_1$ induced by $\theta$. The circle is connected and so $\theta$ distinguishes a component of $X$, and the element in $\pi_1X$ is the image of the generator of $\pi_1 S^1 \cong \Z$ (since $\theta$ lifts the stable tangent bundle of $S^1$, this automatically lands in the above subgroup of $\pi_1 X$, the kernel of the map to $\pi_1 BO(d)$). 

If we want to consider $(X, \xi)$-structures on any abstract circle, then we must further quotient by the effect of $\pi_0 Diff(S^1) \cong \Z/2$ (the non-trivial component corresponds to orientation reversing diffeomorphisms). This acts on the $\pi_1 X$ factor by sending an element to its inverse. Hence the unordered pair $\{ g, g^{-1}\}$ is still a well defined invariant of an $(X, \xi)$-structure on an abstract circle. We will call this the \emph{holonomy} of the $(X, \xi)$-structure. 

In the total dimensional reduction we consider $(X_{S^1}, \xi_{S^1})$-structures on manifolds $Y$ which are the same as $(X, \xi)$-structures on $Y \times S^1$. For each point $y \in Y$ and each framing of $T_yY$ we get an induced $(X, \xi)$-structure on $\{ y \} \times S^1$, and we can read off the holonomy of this factor. (If the holonomy is consider as an unordered pair of elements $\{g, g^{-1}\} \subseteq \pi_1X$, then this doesn't depend on the choice of framing of $T_y Y$). 

We will now describe a new tangential structure $(\overline{X}_{S^1}, \overline{\xi}_{S^1})$ on $(d-1)$-manifolds $Y$, where such a structure is an $(X, \xi)$-structure on $Y \times S^1$ such that around each $\{y\} \times S^1$, the induced $(X, \xi)$-structure has null-holonomy. 

Let $\Map_0(S^1, X)$ denote the union of the components of $\Map(S^1, X)$ such that the induced map $\pi_1 S^1 \to \pi_1 X$ is the zero-homomorphism. Then, mimicking the construction for total dimensional reduction, we form $(\overline{X}_{S^1}, \overline{\xi}_{S^1})$ as the homotopy pull-back:
\begin{center}
\begin{tikzpicture}
		\node (LT) at (0, 1.5) {$\overline{X}_{S^1}$};
		\node (LB) at (0, 0) {$BO(d-1) \times \{\tau_{S^1}\}$};
		\node (MB) at (5, 0) {$BO(d-1) \times \Map(S^1, BO(1))$};
		\node (RT) at (10, 1.5) {$ \Map_0(S^1, X)$};
		\node (RB) at (10, 0) {$ \Map(S^1, BO(d))$};
		\draw [->] (LT) -- node [left] {$\overline{\xi}_{S^1}$} (LB);
		\draw [->] (LT) -- node [above] {$ $} (RT);
		\draw [->] (RT) -- node [right] {$ $} (RB);
		\draw [->] (LB) -- node [below] {$ $} (MB);
		\draw [->] (MB) -- node [below] {$ $} (RB);
		\node at (0.5, 1) {$\ulcorner$};
\end{tikzpicture}
\end{center}
We get an induced functor which allows us to preform \emph{null-holonomic dimensional reduction along $S^1$}:
\begin{equation*}
	(-) \times S^1: \Bord_{d-1}^{(\overline{X}_{S^1}, \overline{\xi}_{S^1})} \to \Bord_d^{(X, \xi)}.
\end{equation*}
  
 \begin{example} \label{ex:nullholonomicdimredux}
 	Let $\theta$ be a $k$-framing of $S^1$ and let $(Y, \psi)$ be $d-k$-dimensional manifold with an $(X, \xi)$-structure. Then we have an induced $(X, \xi)$-structure on $S^1 \times Y$ via:
	\begin{equation*}
		\tau_{S^1} \oplus \varepsilon^{(k-1) \oplus } \oplus  \tau_Y \stackrel{\theta}{\cong} \varepsilon^{k \oplus } \oplus \tau_Y
	\end{equation*}
and then pointwise application of $\psi$. Since $\psi$ is applied pointwise, the induced $(X, \xi)$-structures on $S^1 \times \{y\}$ have null-holonomy. Hence this defines an $(\overline{X}_{S^1}, \overline{\xi}_{S^1})$-structure on $Y$. 	
	 \end{example}
 
\begin{remark} \label{rmk:nullholonomicdimredux}
 By construction,  $\overline{X}_{S^1}$ consists of a collection of certain components of $X_{S^1}$. 
	The number of components of $\overline{X}_{S^1}$ can also be computed, as we did above for $X_{S^1}$. We see that we have a non-canonical bijection
	\begin{equation*}
		\pi_0 \overline{X}_{S^1} \cong \pi_0X \times im(\pi_2 BO(d)) \subseteq \pi_0 X \times \pi_1 F.
	\end{equation*}
	Again this bijection depends on the choice of a null-homotopy of the stable tangent bundle of $S^1$. These are given by the null-holonomic $(X, \xi)$-structures on $S^1$, and we see, in particular, that up to isotopy these are exhausted by the $(X, \xi)$-structures $(S^1, x_* \theta)$ where $\theta$ is a $d$-framing of $S^1$. We also remark that since $\overline{X}_{S^1}$ consists of a collection of certain components of $X_{S^1}$, two $(\overline{X}_{S^1}, \overline{\xi}_{S^1})$-structures are isotopic as $(\overline{X}_{S^1}, \overline{\xi}_{S^1})$-structures if and only if they are isotopic as $(X_{S^1}, \xi_{S^1})$-structures.  
\end{remark}

\subsection{Dimensional reduction to spherophilic structures}

Finally we will consider total dimensional reduction along a $(d-2)$-dimensional manifold $M$. That is we want to consider the functor:
\begin{equation*}
 (-) \times M:	\Bord_2^{(X_M, \xi_M)} \to \Bord_d^{(X, \xi)}.
\end{equation*}
In view of Prop.~\ref{pro:basecase_spherophilic} we would like to know when the new tangential structure $(X_M, \xi_M)$ is spherophilic? Of course a complete answer to this might depend on the particular tangential structure $(X, \xi)$ that we started with. However for certain choices of $M$ it turns out that $(X_M, \xi_M)$ will be spherophilic irregardless of the initial tangential structure. 

\begin{lemma}\label{lem:redux_to_spherophilia}
	Let $\xi:X \to BO(d)$ be any tangential structure for $d$-manifolds. Let $M$ be a $(d-2)$-manifold. Let $\xi_M: X_M \to BO(2)$ be the corresponding dimensionally reduced tangential structure for 2-manifolds.  If the top Stiefel-Whitney class $w_{d-2}(M) = 0$ vanishes (equivalently $\chi(M)$ is even), then $(X_M, \xi_M)$ is spherophilic. 
\end{lemma}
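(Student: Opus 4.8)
The plan is to verify condition (2) of Lemma~\ref{lem:spherophilia} for $(X_M,\xi_M)$: for each component $[x_0]\in\pi_0 X_M$ I must show that the image of $\pi_2(X_M,x_0)$ in $\pi_2 BO(2)\cong\Z$ contains $2\Z$, equivalently that $2$ lies in this image. By the defining pullback of the total dimensional reduction (Section~\ref{sec:dimredux_total}), a component $[x_0]$ is pinned down by an $(X,\xi)$-structure $\psi$ on $M$, and a class in $\pi_2(X_M,x_0)$ mapping to $2\in\pi_2 BO(2)$ is exactly a lift of $\tau_{S^2}$ to $X_M$ in that component --- that is, an $(X,\xi)$-structure on $S^2\times M$ whose restriction to a slice $\{s_0\}\times M$ is $\psi$. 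So the whole statement reduces to the following: \emph{if $w_{d-2}(M)=0$, then for every $(X,\xi)$-structure $\psi$ on $M$ the closed $d$-manifold $S^2\times M$ admits an $(X,\xi)$-structure extending $\psi$ along a slice.}

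I would build this structure by obstruction theory, lifting the Gauss map $\tau_{S^2}\oplus\tau_M\colon S^2\times M\to BO(d)$ through $\xi$ relative to the slice $\{s_0\}\times M$, on which $\psi$ already furnishes a lift. Give $S^2$ the handle decomposition with one $0$-handle and one $2$-handle, so that $S^2\times M=(D_-\times M)\cup_{S^1\times M}(D_+\times M)$; over $D_-\times M$ the lift is $\psi$ pulled back along the projection, and the entire problem becomes extending across the $2$-handle $D_+\times M$, which is glued to the rest along $S^1\times M$ by the clutching function of $TS^2$, a map $S^1\to SO(2)$ of degree $\chi(S^2)=2$. Writing $F$ for the homotopy fiber of $\xi$, the obstruction to this single extension lives in $H^{*}\big((D_+,S^1)\times M;\pi_{*-1}F\big)$. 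Because the relative class of $(D_+,S^1)$ is concentrated in degree $2$ and $\dim M=d-2$, the Künneth decomposition collapses the intermediate terms and leaves a single top class in $H^2(D_+,S^1)\otimes H^{d-2}(M;\pi_{d-1}F)$, namely the external product of the relative Euler class of the $2$-handle with a top obstruction class determined by $\psi$ and $TM$.

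The crux is to identify this surviving class and to see that the hypothesis kills it. Here I would use the stable triviality $TS^2\oplus\varepsilon^1\cong\varepsilon^3$: after one stabilization the $S^2$-factor contributes nothing, so the degree-$2$ clutching is only seen through its mod-$2$ reduction, and the external product becomes the image of the mod-$2$ Euler class $w_{d-2}(M)$ of $M$. This is the same mechanism already used in the proof of Lemma~\ref{lem:spherophilia}, where the primary obstruction over $S^2$ was the image of $e(S^2)=2$, and in Example~\ref{Ex:restriction_spherophilia}. Since $w_{d-2}(M)=0$ by hypothesis --- equivalently $\langle w_{d-2}(M),[M]\rangle=\chi(M)$ is even --- the obstruction vanishes and the lift exists. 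As a consistency check one can instead verify condition (3) of Lemma~\ref{lem:spherophilia}: the two reduced structures $x_*\theta_{+1}$ and $x_*\theta_{-1}$ on $S^1$ (hence on $S^1\times M$) should become isotopic precisely when this same Euler-type obstruction over $M$ vanishes.

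I expect the genuinely delicate point to be the coefficient bookkeeping in the middle step: proving that the intermediate obstructions really do vanish (rather than merely being indeterminate), that the surviving top obstruction is \emph{exactly} the mod-$2$ Euler class and not an integral refinement of it, and that the local coefficient system coming from $w_1(M)$ is handled correctly. This is exactly the place where the precise hypothesis $w_{d-2}(M)=0$, rather than a stronger vanishing, must be used, and it is where the argument for a general fibration $\xi$ (as opposed to the orientation- or spin-type structures of primary interest) requires the most care.
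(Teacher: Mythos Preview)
Your approach differs substantially from the paper's and contains a gap you yourself flag but do not close. You try to verify condition~(2) of Lemma~\ref{lem:spherophilia} by running obstruction theory for lifting $\tau_{S^2\times M}$ through $\xi$ relative to a slice. The relevant obstruction groups are $H^{k}\bigl((D_+,S^1)\times M;\pi_{k-1}F\bigr)\cong H^{k-2}(M;\pi_{k-1}F)$ for $2\le k\le d$, and for a completely general fibration $\xi\colon X\to BO(d)$ there is no reason for the intermediate classes to vanish: the homotopy groups $\pi_{*}F$ are arbitrary, and your appeal to ``K\"unneth collapsing the intermediate terms'' is simply false at the level of groups. You would need to argue that the actual obstruction \emph{classes} vanish, which requires precisely the $X$-independence that you never isolate. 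Your final paragraph correctly identifies this as the crux, but the proposal does not resolve it.

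The paper avoids this entirely by using condition~(3) of Lemma~\ref{lem:spherophilia} instead. The two $(X_M,\xi_M)$-structures $\psi_*\theta_{+1}$ and $\psi_*\theta_{-1}$ on $S^1$ are obtained by first applying the bundle isomorphisms $\overline{\theta}_{\pm 1}=\theta_{\pm 1}\oplus\mathrm{id}_{\tau_M}\colon \tau_{S^1}\oplus\varepsilon\oplus\tau_M\to\varepsilon^2\oplus\tau_M$ and only \emph{then} applying $\psi$ pointwise. Hence it suffices to show that $\overline{\theta}_{+1}$ and $\overline{\theta}_{-1}$ are isotopic as bundle isomorphisms, a question that no longer involves $X$, $F$, or $\psi$ at all. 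The paper then uses the elementary fact that the $2$-framings $\theta_{+1}$ and $\theta_{-1}$ become isotopic after a single stabilization (as $3$-framings), so the question reduces to whether one can ``borrow'' a trivial line summand from $\tau_M$, i.e.\ whether $\varepsilon\oplus\tau_M\cong\varepsilon^{2}\oplus E$ for some rank-$(d-3)$ bundle $E$. That is a single classical obstruction problem on $M$ alone, and the obstruction is identified with $w_{d-2}(M)$ via the fiber sequence $V_2(\R^{d-1})\to BO(d-3)\to BO(d-1)$.

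The moral: your instinct that the Euler class of $S^2$ and the mod-$2$ Euler class of $M$ interact is correct, but the way to make this rigorous for arbitrary $(X,\xi)$ is to strip out the $(X,\xi)$-dependence \emph{before} doing any obstruction theory, not to hope it disappears in the middle of a computation whose coefficient groups you do not control.
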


\begin{proof}
	If $M$ does not admit any $(X, \xi)$-structures, then $X_M$ is empty and therefore vacuously spherophilic. So we will suppose that $\psi$ is an $(X, \xi)$-structure for $M$. This structure corresponds to a component of $F_M$ and hence to a component of $X_M$ via the projection $\pi_0 F_M \to \pi_0 X_M$. As we have seen each component of $X_M$ gives a map from 2-framings to $(X_M, \xi_M)$-structures (and hence to $(X, \xi)$-structures on the product of the manifold with $M$). We will denote this map by $\psi_*$. Thus for example we have $(X_M, \xi_M)$-manifolds: $(S^1, \psi_* \theta_{1})$ and $(S^1, \psi_* \theta_{-1})$, where $\theta_k$ denotes the $k^\text{th}$ 2-framing of the circle (see Section~\ref{sec:some2frames}). By Lemma~\ref{lem:spherophilia} it is sufficient to show for each $\psi$ ($(X, \xi)$-structure on $M$) that $ \psi_* \theta_{1}$ and $ \psi_* \theta_{-1}$ are isotopic as $(X_M, \xi_M)$-structures. 

Let us consider the $(X_M, \xi_M)$ structures further, viewing them as $(X, \xi)$-structures on $S^1 \times M $. Each structure is obtaining in two steps. First the 2-framings of the circle give us two identifications:
\begin{equation*}
	\overline{\theta}_{+ 1},\overline{\theta}_{-1},  :  \tau_{S^1} \oplus \varepsilon \oplus \tau_M  \stackrel{\cong}{\to} \varepsilon^{\oplus 2} \oplus\tau_M.
\end{equation*}
Here we use $\overline{\theta}_{\pm1}$ to distinguish these induced maps from the 2-framings themselves. After this identification we use the $(X, \xi)$-structure $\psi$ from $M$ (pointwise in the $S^1$-coordinate) to get an $(X, \xi)$-structure on $M \times S^1$. Thus we would be done if it happens that the first two identifications are isotopic. 

The two 2-framings $\theta_{+1}$ and $\theta_{-1}$ become isotopic after stabilizing to 3-framings, that is after adding a trivial line bundle. Thus for example if the tangent bundle of $M$ splits off a  trivial line bundle, $\tau_M \cong \varepsilon \oplus E$, then the identifications $\overline{\theta}_{+ 1}$ and $\overline{\theta}_{-1}$ are isotopic and we would be done. 

The obstruction to $\tau_M$ decomposing in this way is well-known to be the Euler class of the manifold $M$, and hence such a splitting occurs only if the Euler characteristic of $M$ vanishes. This covers, for example, the case that $d$ is odd. 

However we can do better. In this argument it is not strictly necessary that $\tau_M$ splits a trivial line bundle; this only needs to happen stably. That is, it is sufficient to know that $\varepsilon \oplus \tau_M  \cong \varepsilon^{\oplus 2} \oplus E$ for some rank $(d-3)$-bundle $E$ on $M$. 

For $(d-2)$-manifolds there is a single obstruction to the existence of such a splitting which may be identified with the mod 2 reduction of the Euler characteristic, a.k.a. the $(d-2)$-dimensional Stiefel-Whitney class $w_{d-2}$. 
For example we can obtain this identification by using obstruction theory and comparing via the map of homotopy fiber sequences:
\begin{center}
\begin{tikzpicture}
		\node (LT) at (0, 1.5) {$S^{d-3}$};
		\node (LB) at (0, 0) {$V_2(\R^{d-1})$};
		\node (MT) at (2.5, 1.5) {$BO(d-3)$};
		\node (MB) at (2.5, 0) {$BO(d-3)$};
		\node (RT) at (5, 1.5) {$BO(d-2)$};
		\node (RB) at (5, 0) {$BO(d-1)$};
		\draw [->] (LT) -- node [left] {$ $} (LB);
		\draw [->] (LT) -- node [above] {$ $} (MT);
		\draw [->] (LB) -- node [below] {$ $} (MB);
		\draw [->] (MT) -- node [left] {$ $} (MB);
		\draw [->] (MT) -- node [above] {$ $} (RT);
		\draw [->] (MB) -- node [below] {$ $} (RB);
		\draw [->] (RT) -- node [right] {$ $} (RB);
\end{tikzpicture}
\end{center}
The important map, in the non-trivial case that $d$ is even, is the surjection $\Z \cong \pi_{d-3} S^{d-3} \to \pi_{d-3} V_2(\R^{d-1}) \cong \Z/2$. 
Thus if the characteristic number $w_{d-1}(M) = 0$, the tangential structure $(X_M, \xi_M)$ is spherophilic for any tangential structure $(X, \xi)$. 
\end{proof}

\begin{corollary}\label{cor:nullholonomic_dim_reduc_spherophilia}
	Suppose that $\xi:X \to BO(3)$ is a tangential structure for 3-manifolds and let $(\overline{X}_{S^1}, \overline{\xi}_{S^1})$ denote the corresponding null-holonomic dimensionally reduced structure for 2-manifolds described in Section~\ref{sec:dimredux_nullholonomic}. Then $(\overline{X}_{S^1}, \overline{\xi}_{S^1})$ is a spherophilic tangential structure. 
\end{corollary}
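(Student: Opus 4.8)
The plan is to deduce this from Lemma~\ref{lem:redux_to_spherophilia}, which already handles the \emph{total} dimensional reduction $(X_{S^1}, \xi_{S^1})$ along $S^1$, together with the observation recorded in Remark~\ref{rmk:nullholonomicdimredux} that the null-holonomic reduction $(\overline{X}_{S^1}, \overline{\xi}_{S^1})$ is obtained by restricting to a sub-collection of the path-components of $X_{S^1}$.

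First I would apply Lemma~\ref{lem:redux_to_spherophilia} in the case $d = 3$ and $M = S^1$. Here $S^1$ is a $(d-2)$-manifold, and its Euler characteristic $\chi(S^1) = 0$ is even (equivalently the top Stiefel--Whitney class $w_{1}(S^1) = 0$, since the circle is parallelizable), so the hypothesis of that lemma is satisfied. We conclude that the total dimensional reduction $(X_{S^1}, \xi_{S^1})$ is a spherophilic tangential structure for $2$-manifolds, for \emph{any} starting structure $\xi: X \to BO(3)$.

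Next I would transfer spherophilia from $(X_{S^1}, \xi_{S^1})$ to the sub-structure $(\overline{X}_{S^1}, \overline{\xi}_{S^1})$. By Remark~\ref{rmk:nullholonomicdimredux}, $\overline{X}_{S^1}$ is a union of certain path-components of $X_{S^1}$, and $\overline{\xi}_{S^1}$ is simply the restriction of $\xi_{S^1}$ to these components. Since spherophilia is defined component-by-component (Definition~\ref{def:spherophilic}), and since the criterion for a single component to be spherophilic---namely condition (2) of Lemma~\ref{lem:spherophilia}, that the image of $\pi_2$ of the component in $\pi_2 BO(2) \cong \Z$ contains $2\Z$---depends only on that component together with its map to $BO(2)$, this criterion is unchanged when the component is viewed inside $\overline{X}_{S^1}$ rather than inside $X_{S^1}$. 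As every component of $X_{S^1}$ is spherophilic, so is every component of $\overline{X}_{S^1}$, whence $(\overline{X}_{S^1}, \overline{\xi}_{S^1})$ is spherophilic.

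I do not expect a serious obstacle; the content is entirely in Lemma~\ref{lem:redux_to_spherophilia} and in the identification of $\overline{X}_{S^1}$ with a sub-collection of components of $X_{S^1}$. The one point worth checking carefully is precisely that spherophilia is an intrinsic, component-local property, so that passing to a sub-collection of components cannot destroy it. Alternatively, one can verify the conclusion via condition (3) of Lemma~\ref{lem:spherophilia}, using the further fact from Remark~\ref{rmk:nullholonomicdimredux} that isotopy of null-holonomic structures agrees with isotopy of the underlying $(X_{S^1}, \xi_{S^1})$-structures; then the isotopy $x_* \theta_{+1} \simeq x_* \theta_{-1}$ that holds in $X_{S^1}$ automatically holds in $\overline{X}_{S^1}$ as well.
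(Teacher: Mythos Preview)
Your proposal is correct and essentially identical to the paper's proof: both apply Lemma~\ref{lem:redux_to_spherophilia} with $d=3$, $M=S^1$ (using $\chi(S^1)=0$) to get that $(X_{S^1},\xi_{S^1})$ is spherophilic, and then observe via Remark~\ref{rmk:nullholonomicdimredux} that $\overline{X}_{S^1}$ is a union of components of $X_{S^1}$, so spherophilia is inherited. The only cosmetic difference is that the paper phrases the transfer step using condition~(3) of Lemma~\ref{lem:spherophilia} (the isotopy $y_*\theta_{+1}\simeq y_*\theta_{-1}$), which you mention as your alternative route.
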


\begin{proof}
	Since the Euler characteristic of $S^1$ is zero, the structure for total dimensional reduction $(X_{S^1}, \xi_{S^1})$ is spherophilic. In particular for each $y \in \pi_0 X_{S^1}$ we have $y_* \theta_{+1} \simeq y_* \theta_{-1}$ are isotopic $(X_{S^1}, \xi_{S^1})$-structures on $S^1$. Since $\overline{X}_{S^1}$ consists of a collection of components of $X_{S^1}$, it follows that $y_* \theta_{+1} \simeq y_* \theta_{-1}$ are isotopic as $(\overline{X}_{S^1}, \overline{\xi}_{S^1})$-structures whenever $y$ belongs to the componets making up $\overline{X}_{S^1}$, see Remark~\ref{rmk:nullholonomicdimredux}. Thus $(\overline{X}_{S^1}, \overline{\xi}_{S^1})$ is a spherophilic.
\end{proof}

\section{The main theorem, general tangential structure} \label{sec:induct_general}

We are now set to prove our main theorem in the presence of general tangential structures.

\begin{theorem}\label{thm:MainThm_general}
	Fix $d \geq 3$ and any tangential structure $\xi: X \to BO(d)$. Consider a once-extended topological field theory
	\begin{equation*}
		\cZ: \Bord_d^{(X, \xi)} \to \cC.
	\end{equation*}
	Then $\cZ$ is invertible if and only if for each component $[x] \in \pi_0 X$ the value of $\cZ(T^{d-1}, x_* \theta_{+1} \times \theta_\text{Lie})$ is invertible. 
\end{theorem}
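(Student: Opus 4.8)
The plan is to run the same induction on the dimension $d$ that powers the oriented Theorem~\ref{thm:main_thm_SO}, replacing each oriented ingredient by its general-tangential-structure counterpart. I would anchor the induction at $d=3$ rather than $d=2$. Given $\cZ:\Bord_d^{(X,\xi)}\to\cC$, the first move is to perform null-holonomic dimensional reduction along $S^1$ (Section~\ref{sec:dimredux_nullholonomic}), producing a $(d-1)$-dimensional theory $\overline{\cZ}_{S^1}:\Bord_{d-1}^{(\overline{X}_{S^1},\overline{\xi}_{S^1})}\to\cC$. When $d>3$ the structure $(\overline{X}_{S^1},\overline{\xi}_{S^1})$ is a general tangential structure for $(d-1)$-manifolds and I would invoke the inductive hypothesis; when $d=3$ the reduced structure is spherophilic by Corollary~\ref{cor:nullholonomic_dim_reduc_spherophilia}, so I would instead invoke the spherophilic base case Proposition~\ref{pro:basecase_spherophilic}. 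Either way the output is that $\overline{\cZ}_{S^1}$ is invertible, which unfolds to the statement that $\cZ$ assigns invertible values to every product $Y\times S^1$ carrying a null-holonomic structure, for $Y$ a $(d-2)$-manifold or a $(d-2)$-bordism.

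To apply the lower-dimensional result I must verify its torus hypothesis for $\overline{\cZ}_{S^1}$: that $\overline{\cZ}_{S^1}(T^{d-2},y_*(\theta_{+1}\times\theta_{\text{Lie}}^{\times(d-3)}))$ is invertible for every $y\in\pi_0\overline{X}_{S^1}$. Unwinding the reduction, $T^{d-2}\times S^1=T^{d-1}$ receives the framing $\theta_{+1}\times\theta_{\text{Lie}}^{\times(d-3)}$ on its first $d-2$ circles, while the last (reduction) circle is framed by the $d$-framing recorded by the component $y$. Using the identification $\pi_0\overline{X}_{S^1}\cong\pi_0 X\times\operatorname{im}(\pi_2 BO(d))$ of Remark~\ref{rmk:nullholonomicdimredux}, a component is a pair consisting of a base component $[x]\in\pi_0 X$ and the parity of that $d$-framing. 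For the \emph{even} components the reduction circle carries (the stabilisation of) the Lie framing, so the unfolded value is exactly $\cZ(T^{d-1},x_*(\theta_{+1}\times\theta_{\text{Lie}}^{\times(d-2)}))$, which is invertible by hypothesis.

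With the lower-dimensional theorem in hand, the second reduction is total dimensional reduction along each $(d-2)$-manifold $M$ (Section~\ref{sec:dimredux_total}), giving $\cZ_M:\Bord_2^{(X_M,\xi_M)}\to\cC$ with $\cZ_M(S^1,\,\cdot\,)=\cZ(M\times S^1,\,\cdot\,)$. By Lemma~\ref{lem:redux_to_spherophilia} the structure $(X_M,\xi_M)$ is spherophilic whenever $\chi(M)$ is even; in that case the invertibility of the relevant circle values (supplied by the first reduction) lets Proposition~\ref{pro:basecase_spherophilic} conclude that $\cZ_M$, and hence $\cZ(M)=\cZ_M(pt)$, is invertible. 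For $M$ with $\chi(M)$ odd I would instead run the argument on $M\sqcup M$ (which has even Euler characteristic), obtaining that $\cZ(M)^{\otimes 2}$ is $\otimes$-invertible; since $A^{\otimes 2}$ invertible forces $A$ invertible in a symmetric monoidal category (its inverse is $A\otimes(A^{\otimes 2})^{-1}$), this gives invertibility of $\cZ(M)$ for all $(d-2)$-manifolds. From here the climb proceeds exactly as in the oriented proof: a tangential analogue of the surgery Lemma~\ref{lem:surgery_invertibility}, together with the observations that the bounding framing $\theta_{+1}$ makes $(T^{d-1},x_*(\theta_{+1}\times\theta_{\text{Lie}}^{\times(d-2)}))$ null-bordant and that $N\sqcup N^{\dagger}$ bounds (via $N\times I$, with $N^{\dagger}$ the reflection of $N$), shows every closed $(d-1)$-manifold has invertible value; a further application of Lemma~\ref{lem:symmetric_mon_invert_mor} promotes this to all $(d-1)$-bordisms, and Lemma~\ref{lem:moving_up_gentangential} finishes.

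The hard part will be the \emph{odd} components of $\overline{X}_{S^1}$, which are exactly the ones the second reduction demands, since Proposition~\ref{pro:basecase_spherophilic} tests the odd framing $\theta_1=\theta_{+1}$ on the reduced circle. When $\operatorname{im}(\pi_2 BO(d)\to\pi_1 F)$ is nontrivial --- which happens for genuinely interesting structures such as spin, where $\pi_2 BSpin(d)=0$ --- the reduced structure has a second family of components whose torus values unfold not to the hypothesis framing but to $\cZ(T^{d-1},x_*(\theta_{+1}\times\theta_{+1}\times\theta_{\text{Lie}}^{\times(d-3)}))$, carrying a \emph{second} bounding circle. This value is not among the hypotheses, so the lower theorem does not apply on the nose. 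The point I would exploit is that, because $\theta_{+1}$ bounds, both the one- and two-$\theta_{+1}$ tori are $(X,\xi)$-null-bordant, hence $(X,\xi)$-bordant to one another, so transporting invertibility between them by surgery would supply the missing component. The delicacy is that this surgery argument itself consumes invertibility of lower-dimensional manifolds, so the induction must be ordered to establish invertibility of the $(d-2)$-manifolds and spheres entering the surgeries before the odd components are needed; threading this ordering without circularity is the main obstacle, and it is precisely the reason the distinguished bounding framing $\theta_{+1}$ and the Lie framings are forced upon us in the statement.
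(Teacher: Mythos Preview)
Your overall inductive architecture matches the paper's, and you have correctly located the crux: the ``odd'' components of $\overline{X}_{S^1}$, where the reduced torus unfolds to $(T^{d-1}, x_*(\theta_{+1}\times\theta_{+1}\times\theta_{\text{Lie}}^{\times(d-3)}))$ rather than the hypothesis torus. But you have not resolved it --- you propose transporting invertibility by surgery and then concede that threading this without circularity is ``the main obstacle''. The paper dissolves this obstacle non-circularly with an observation you are missing: the two $d$-framed $(d-1)$-tori in question are \emph{framed diffeomorphic}. On the 2-torus there are exactly four stable framings; the Lie framing $\theta_{\text{Lie}}\times\theta_{\text{Lie}}$ is the unique fixed point of $\mathrm{Diff}(T^2)$, while the other three --- $\theta_{+1}\times\theta_{\text{Lie}}$, $\theta_{\text{Lie}}\times\theta_{+1}$, $\theta_{+1}\times\theta_{+1}$ --- lie in a single orbit. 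Since the hypothesis framing already contains a $\theta_{+1}$ factor, a diffeomorphism supported on a 2-torus subfactor carries the two-$\theta_{+1}$ torus to the one-$\theta_{+1}$ torus, so they have equal value under $\cZ$ and the induction for $\cZ_{S^1}$ goes through immediately. No surgery bootstrap is needed here at all.

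There is a second, smaller gap. Your handling of $(d-2)$-manifolds $M$ with odd Euler characteristic via $M\sqcup M$ does not work: Lemma~\ref{lem:redux_to_spherophilia} requires the top Stiefel--Whitney \emph{class} $w_{d-2}$ to vanish, and $w_{d-2}(M\sqcup M)$ restricts to $w_{d-2}(M)\neq 0$ on each component, so $(X_{M\sqcup M},\xi_{M\sqcup M})$ is still not spherophilic. The paper reorders the argument to avoid this: it first uses the even-$\chi$ case (which already covers all $S^p\times S^q$) plus basic dimensional reduction to show the elementary handle bordisms $S^p\times D^{q+1}$ take invertible values, deduces surgery invariance, and then proves invertibility of \emph{all} $(d-1)$-bordisms via adjoints and the 2-out-of-6 property (rather than the $N\sqcup\overline N$ trick you borrow from the oriented proof). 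Only after that does it return to odd-$\chi$ $M$, using the now-invertible connect-sum bordism $P:M\#\overline{M}\to M\sqcup\overline{M}$ together with the fact that $M\#\overline{M}$ is connected with even Euler characteristic.
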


\begin{proof}
	In very broad strokes the proof here is the same as for Theorem~\ref{thm:main_thm_SO} in the oriented case, however there are a number of small alterations and side arguments that must be made when we are dealing with general tangential structures. Recall that in the proof of the oriented case we had the following steps:
	\begin{enumerate}
		\item We used dimensional reduction along $S^1$ and induction to show that $\cZ(M \times S^1)$ is invertible for any $(d-2)$-manifold $M$;
		\item We used dimensional reduction along $M$ and the base case to show that $\cZ(M)$ is invertible for any $(d-2)$-manifold $M$;
		\item We showed that the invertibility of closed $(d-1)$-manifolds under $\cZ$ was invariant under surgery; and
		\item We used a variety of tricks to extend this to all $(d-1)$-bordisms and hence proved the theorem using Lemma~\ref{lem:moving_up}. 
	\end{enumerate}
The first and most significant difficulty with duplicating this argument in the presence of general tangential structures is that we have only established the base case ($d=2$) for spherophilic tangential structures and not for all tangential structures (See Section~\ref{sec:spherophilic_base}). This complicates both the  argument in step (2) and the
induction in step (1) (particularly in the next-to-lowest $d=3$ case). 

This also necessitates using the more complicated dimensional reductions described in Sections~\ref{sec:dimredux_total} and \ref{sec:dimredux_nullholonomic}, rather than the basic dimensional reduction described in Section~\ref{sec:dimredux_basic}. For example in step (2) we want to conclude that $\cZ(M, \theta)$ is invertible for any $(X, \xi)$-structure $\theta$ on $M$. The basic dimensional reduction along $(M, \theta)$ yields a tangetially framed 2-dimensional field theory:
\begin{equation*}
	Z_{(M, \theta)}: \Bord_2^\text{fr} \to \cC.
\end{equation*}
However 2-framings are not a spherophilic tangential structure, and hence we can't appeal to Proposition~\ref{pro:basecase_spherophilic}. Using total dimensional reduction instead allows us to correct this in some cases (namely when $w_{d-2}(M) = 0$, see Lemma~\ref{lem:redux_to_spherophilia}).  

However total dimensional reduction also has its pitfalls. For example in step~1, we would like to dimensionally reduce along the circle and appeal to induction to show that this new dimensionally reduced theory is invertible. Using total dimensional reduction along the circle at first seems promising. For example in the lowest case $d=3$, since the Euler characteristic of the circle $\chi(S^1) = 0$ vanishes, by Lemma~\ref{lem:redux_to_spherophilia} total dimensional reduction along the circle yields a spherophilic tangential structure irregardless of $(X, \xi)$. However there is another, different problem in trying to apply induction. 

Let $\cZ_{S^1}$ temporarily denote the effect of doing total dimensional reduction along $S^1$ to the theory $\cZ$. By assumption we know that the value $\cZ(T^{d-1}, x_* \theta_{+1} \times \theta_\text{Lie})$ is invertible for each component $[x] \in \pi_0 X$. However to apply our induction hypothesis to we would need to know the invertibility of $\pi_0 X_{S^1}$-many morphisms. As we saw in Section~\ref{sec:dimredux_nullholonomic}, $\pi_0 X_{S^1} \cong \pi_0 X \times \pi_1 F$, where $F$ is the homotopy fiber of $\xi: X \to BO(d)$. Depending on $X$ this can yield more conditions than we have assumptions, and so we cannot apply induction in this way (at least not for general $X$). 

The solution for step~1 is to use the null-holonomic dimensional reduction which was described in Section~\ref{sec:dimredux_nullholonomic}. That is we precompose $\cZ$ with the functor
\begin{equation*}
	(-) \times S^1: \Bord_{d-1}^{(\overline{X}_{S^1}, \overline{\xi}_{S^1})} \to \Bord_d^{(X, \xi)}
\end{equation*}
to obtain a new field theory, which we now denote $\cZ_{{S^1}}$, for manifolds with $(\overline{X}_{S^1}, \overline{\xi}_{S^1})$-structures. As we saw, the components of $\overline{X}_{S^1}$ are in bijection with \emph{null-holonomic} $(X, \xi)$-structures on $S^1$ and these are exhausted by $(S^1, x_* \psi)$ where $\psi$ is a $d$-framing of $S^1$. 

Let $y = (x, \psi)$ be a pair consisting of a point $x \in X$ and a $d$-framing of $S^1$. Let $(T^{d-2}, \theta_1 \times \theta_\text{Lie})$ denote the $(d-2)$-torus with $(d-1)$-framing which is the positive (bounding) 2-framing $\theta_1$ on the first circle and the Lie group framing on the remaining factors (see Section~\ref{sec:some2frames}). For $d > 3$, to apply our induction hypothesis it suffices to know that $\cZ_{S^1}( T^{d-2}, y_* \theta_1 \times \theta_\text{Lie})$ is invertible for all $y$. Computing we have
\begin{equation*}
	\cZ_{S^1}( T^{d-2}, y_* \theta_1 \times \theta_\text{Lie}) = \cZ(T^{d-2} \times S^1, x_*(\theta_1 \times \theta_\text{Lie} \circ \psi))
\end{equation*}
where $\theta_1 \times \theta_\text{Lie} \circ \psi$ denotes the $d$-framing on $T^{d-2} \times S^1$ obtained as a composite:
\begin{equation*}
	\varepsilon \oplus \tau_{T^{d-2}} \oplus \tau_{S^1} \stackrel{\theta_1 \times \theta_\text{Lie}}{\cong} \varepsilon^{\oplus d-1} \oplus \tau_{S^1} \stackrel{\psi}{\cong} \varepsilon^{\oplus d}.
\end{equation*}
We know by assumption that $\cZ(T^{d-1}, x_* \theta_1 \times \theta_\text{Lie})$ is invertible. The $d$-framings $\theta_1 \times \theta_\text{Lie} \circ \psi$  and $\theta_1 \times \theta_\text{Lie}$ on $T^{d-1}$ may not be isotopic, but nevertheless the resulting $d$-framed tori are framed diffeomorphic\footnote{In fact the framed diffeomorphism is supported on a 2-dimensional stably framed torus and so it suffices to consider that case. There are precisely four stable framings on $T^2$, and under the action of the diffeomorphisms of $T^2$ three of these are permuted. The Lie group framing is the single fixed point. In the $(d-1)$-tori case, the relevant framings are products which differ only on at most a 2-torus, but since there is always a $\theta_1$-factor, these framings on 2-tori are give diffeomorphic framed tori.}, and hence (provided $d >3$) the conditions of our induction hypothesis are satisfied. When $d=3$ we need to check the additional condition that the structure $(\overline{X}_{S^1}, \overline{\xi}_{S^1})$ is spherophilic, but this is always the case by Corollary~\ref{cor:nullholonomic_dim_reduc_spherophilia}.

Thus the dimensionally reduced theory $\cZ_{S^1}$ is an invertible theory. This implies that for any $(d-2)$-manifold $M$ and $(\overline{X}_{S^1}, \overline{\xi}_{S^1})$-structure $\psi$ on $M$, the value $\cZ_{S^1}(M, \psi)$ is invertible. An example of such a $\psi$ was given in Example~\ref{ex:nullholonomicdimredux}: if $\theta$ is any $(X, \xi)$-structure on $M$ and $\theta_k$ is any 2-framing of $S^1$, then the induced $(X, \xi)$-structure on $M \times S^1$, which we will denote $\theta \times \theta_k$ constitutes such an $(\overline{X}_{S^1}, \overline{\xi}_{S^1})$-structure $\psi$ on $M$. It follows that
\begin{equation*}
	\cZ(M \times S^1, \theta \times \theta_k)
\end{equation*}
is invertible for any $(X, \xi)$-structure $\theta$ on $M$ and 2-framing $\theta_k$ on $S^1$. 

Next, we proceed to step~2 and consider total dimensional reduction along $(d-2)$-manifolds $M$. That is we precompose $\cZ$ with the functor 
\begin{equation*}
	M \times (-):\Bord_2^{(X_M, \xi_M)} \to \Bord_d^{(X, \xi)}
\end{equation*}
to obtain a new 2-dimensional theory $\cZ_M$ for $(X_M, \xi_M)$-structures. We will attempt to show that this theory is invertible by appealing to Proposition~\ref{pro:basecase_spherophilic}. To apply this proposition we need to show two things, first that for each component $y \in \pi_0 X_M$ the value $\cZ_M(S^1, y_* \theta_1)$ is invertible, and second that $(X_M, \xi_M)$ is a spherophilic tangential structure. 

Let us consider the first condition first. As explained in Section~\ref{sec:dimredux_total} the components of $X_M$ receive a surjective map from the set of $(X, \xi)$-structures on $M$. If $\theta$ is such a structure (mapping to $[y] \in \pi_0 X_M$) and $\theta_k$ is a 2-framing of $S^1$ then we obtain an induced $(X_M, \xi_M)$-structure $y_* \theta_k$ on $S^1$. This corresponds precisely to the $(X, \xi)$-structure on $M \times S^1$ which we denoted by $\theta \times \theta_k$ above. In particular we have already established that 
\begin{equation*}
	\cZ_M(S^1, y_* \theta_k) = \cZ(M \times S^1, \theta \times \theta_k)
\end{equation*} 
is invertible. 

The second condition is more problematic, but Lemma~\ref{lem:redux_to_spherophilia} ensures that $(X_M, \xi_M)$ is spherophilic provided that the top Stiefel-Whitney class vanishes, $w_{d-2}(M) = 0$. In that case Proposition~\ref{pro:basecase_spherophilic} tells us that the dimensionally reduced theory $\cZ_M$ is invertible. In particular we have shown that if $M$ is any $(d-2)$-manifold such that $w_{d-2}(M) = 0$ (i.e. each component of $M$ has even Euler characteristic), and $\theta$ is any $(X, \xi)$-structure on $M$, then $\cZ(M, \theta)$ is invertible. For example when $d$ is odd this first condition is always satisfied. When $d$ is even, this is not yet as comprehensive a result as in the oriented case, but it is a start. In particular this shows that for all $p + q = d-2$, and all $(X, \xi)$-structures $\theta$ on $S^p \times S^q$, the value $\cZ(S^p \times S^q, \theta)$ is invertible. 

Our next goal will be to prove an analog of Lemma~\ref{lem:surgery_invertibility}. Since we only established a partial version of Step~2, we proceed with a different argument than in the oriented case. We will use the basic dimensional reduction described in Section~\ref{sec:dimredux_basic} to show:

\begin{lemma}\label{lem:knots_invertible}
	Let $p + q = d-2$ be non-negative integers and fix an $(X, \xi)$-structure $\psi$ on $S^p$. There is a unique $(q+2)$ framing $\theta$ on the bordism $D^{q+1}$, viewed as a bordism from $S^q$ to $\emptyset$. This induces (by the basic dimensional reduction map) an $(X, \xi)$-structure $\psi \times \theta$ on the bordism
	\begin{equation*}
		S^p \times D^{q+1}: S^p \times S^q \to \emptyset.
	\end{equation*} 
Let $\cZ$ be a field theory satisfying the assumptions of Theorem~\ref{thm:MainThm_general}, then $\cZ(S^p \times D^{q+1}, \psi \times \theta)$ is invertible. 
\end{lemma}

There are two cases $q = 0, d-2$ and $0 < q <  d-2$. In the first case (say $q=0$) the bordism in question is an annulus $S^{d-2} \times I$, read as a bordism from $S^{d-2} \sqcup S^{d-2}$ to $\emptyset$. There is a dual annulus which goes the other way, and the composite gives $S^{d-2} \times S^1$, which as we have already seen takes an invertible value under $\cZ$. Since the objects $\emptyset$ and $S^{d-2} \sqcup S^{d-2}$ also take invertible values under $\cZ$, it follows from Lemma~\ref{lem:symmetric_mon_invert_mor} that both annuli take invertible values. (The dual annulus covers the case $q = d-2$). 

When $0 < q <  d-2$, then we instead consider the basic dimensional reduction of Section~\ref{sec:dimredux_basic}, in which we precompose $\cZ$ with the functor:
\begin{equation*}
	(S^p, \psi) \times (-): \Bord_{q+2}^\text{fr} \to \Bord_d^{(X, \xi)}.
\end{equation*}
This gives rise to a tangentially framed $(q+2)$-dimensional field theory. Under this functor the $(q+2)$-framed torus $(T^{q+1}, \theta_1 \times \theta_\text{Lie})$ gets mapped to $(S^p \times T^{q+1}, \psi \times \theta_1 \times \theta_\text{Lie})$, which we have already seen is invertible. Since $0 < q <  d-2$, we have that $3 < q+2 < d$, and so by induction we can apply Theorem~\ref{thm:MainThm_general} to conclude that this dimensionally reduced theory is invertible. It follows that any bordism in its image, such as $(S^p \times D^{q+1}, \psi \times \theta)$, takes an invertible value under $\cZ$. This establishes the the above lemma. 

\begin{corollary}\label{cor:surgery_invariance_tangential}
	Under the assumptions of Theorem~\ref{thm:MainThm_general}, suppose that $(N_1, \theta_1)$ and $(N_2, \theta_2)$ are parallel $(d-1)$-dimensional $(X, \xi$)-bordisms, such that $N_2$ is obtained from $N_1$ via $(X, \xi)$-surgery. Then $\cZ(N_1, \theta_1)$ is invertible if and only if $\cZ(N_2, \theta_2)$ is invertible. 
\end{corollary}

In fact Lemma~\ref{lem:knots_invertible} above shows a slightly stronger result. To say that $(N_1, \theta_1)$ and $(N_2, \theta_2)$ are related by `$(X, \xi)$-surgery' means that they are related by a finite sequence of moves, of the type to be explained. It suffices to assume that $N_2$ is obtained by one application of these moves. A move consists of the following: First an embedded $S^p \times D^{q+1}$ (with induced $(X, \xi)$-structure) is removed from $N_1$ to form a new $(X, \xi)$-bordism $\Sigma = N_1 \setminus S^p \times D^{q+1}$,
\begin{equation*}
	\Sigma: M_1  \to M_2 \sqcup S^p \times S^q
\end{equation*}
where $M_1$ and $M_2$ are the sources and targets of $N_1$. Next we compose $\Sigma$ with $D^{p+1} \times S^q$ to obtain $N_2$. By Lemma~\ref{lem:knots_invertible} the values of $\cZ(S^p \times D^{q+1})$ and $\cZ(D^{p+1} \times S^q)$ are invertible and hence $\cZ(N_1)$ is invertible if and only if $\cZ(\Sigma)$ is invertible if and only if $\cZ(N_2)$ is invertible. To actually count as \emph{surgery} we must further require that the $(X, \xi)$-structures on $S^p \times D^{q+1}$ and $D^{p+1} \times S^q$ must glue to an $(X, \xi)$-structure which extends to the handle $D^{p+1} \times D^{q+1}$. The above corollary does not actually need this requirement, and is valid for this `generalized $(X, \xi)$-surgery'. However if it is the case $(N_1, \theta_1)$ and $(N_2, \theta_1)$ are related by actual $(X, \xi)$-surgery, then they are connected by an $(X, \xi)$-bordism, and conversely an $(X,\xi)$-bordism can be used to obtain a sequence of surgeries relating $N_1$ and $N_2$ (for example by choosing a Morse function on this bordism). If in addition $M_1 = M_2 = \emptyset$ so that $N_1$ and $N_2$ are closed, then this means they represent the same element in the $(X, \xi)$-bordism group\footnote{$(X, \xi)$ is not a stable tangential structure, but the relation of $(X, \xi)$-bordism still makes sense for manifolds of dimension $k \leq d-1$, and yields abelian groups $\Omega_k^{(X, \xi)}$ defined in the usual way.}, $\Omega^{(X, \xi)}_{d-1}$. 

\begin{corollary}\label{cor:cor_all_1mor_invertible}
	If $N$ is any $(d-1)$-dimensional $(X, \xi)$-bordism from $M_1$ to $M_2$, then $\cZ(N)$ is invertible. 
\end{corollary}

The bordism $N$ is a 1-morphism in $\Bord_d^{(X, \xi)}$ and every 1-morphism in $\Bord_d^{(X, \xi)}$ has a (say, left) adjoint. Thus there exists another $(d-1)$-dimensional $(X, \xi)$-bordism $N^L$ from $M_2$ to $M_1$ and unit and counit morphisms witnessing the adjunction between $N$ and $N^L$. These are $(d-2)$-dimensional $(X, \xi)$-bordisms
\begin{align*}
	\eta: I \times M_2 \to N \circ N^L
	\varepsilon: N^L \circ N \to I \times M_1.
\end{align*}
Since the identity bordisms $I \times M_i$ are invertible in $\Bord_d^{(X, \xi)}$, they map to invertible values under $\cZ$. The existence of these $(d-2)$-dimensional bordisms means that $I \times M_2$ is related by surgery to $N \circ N^L$ and $I \times M_1$ is related by surgery to $N^L \circ N$. Hence by Cor.~\ref{cor:surgery_invariance_tangential}, $\cZ(N) \circ \cZ(N^L)$ and $\cZ(N^L) \circ \cZ(N)$ are invertible. Since invertible morphisms are closed under the 2-out-of-6 property, we have that both $\cZ(N^L)$ and $\cZ(N)$ are invertible.

Now we can return to and complete Step~2, showing that $\cZ(M, \theta)$ is invertible for all $(d-2)$-dimensional $(X, \xi)$-manifolds  $(M, \theta)$. It suffices to consider the case were $M$ is connected. Given such a manifold, we consider $M \times I$ as a bordism from $M \sqcup M$ to $\emptyset$. There is a unique $(X, \xi)$-structure on $M \times I$ which extends the $(X, \xi)$-structure on the first copy of $M$. On the second copy this determines a dual $(X, \xi)$-structure  $\overline{\theta}$ on $M$. We denote $\overline{M} = (M, \overline{\theta})$. The $(X, \xi)$-connect sum of $M$ and $\overline{M}$ yields the manifold $M \# \overline{M}$. This manifold is connected and satisfies $w_{d-2}(M \# \overline M) = 0$, and hence $\cZ(M \# \overline M)$ is invertible. 

Moreover there is a $(d-1)$-dimensional $(X, \xi)$-bordisms which witnesses the connect sum operation. This is a  higher dimensional analogs of the pair-of-pants bordism:
\begin{equation*}
	P: M \# \overline M \to M \sqcup \overline M.
\end{equation*}
As we have just seen, $\cZ(P): \cZ(M \# \overline M) \to \cZ(M) \otimes \cZ(\overline{M})$ is invertible, and since $\cZ(M \# \overline M)$ is invertible it follows that
\begin{equation*}
	\cZ(M) \otimes \cZ(\overline{M}) \cong \cZ(\overline{M}) \otimes \cZ({M})
\end{equation*}
is invertible, and hence $\cZ(M)$ is invertible.

Thus we have shown that $\cZ$ takes invertible values on all $(d-2)$-manifolds and all $(d-1)$-bordisms. Theorem~\ref{thm:MainThm_general} now follows directly from Lemma~\ref{lem:moving_up_gentangential}.
\end{proof}


\section{Extending Downward} \label{sec:extending_down}

Now we will show how to extend our previous results about $(\infty,2)$-categorical field theories to more extended $(\infty,n)$-categorical theories. We will need to set up some notation. Let $\Bord^{(X, \xi)}_{d;n}$ symmetric monoidal $(\infty, n)$-category whose objects are closed oriented $(d-n)$-manifolds, whose 1-morphsims are oriented $(d-n+1)$-dimensional bordisms, etc, up to dimension $d$, and where everything is equipped with an $(X, \xi)$-structure. Our previous results concerned the symmetric monoidal $(\infty, 2)$-category $\Bord^{(X, \xi)}_{d;2}$. 

\begin{theorem}\label{thm:main}
	Let $\cZ:\Bord^{(X, \xi)}_{d;n} \to \cC$ be an extended topological field theory valued in the symmetric monoidal $(\infty,n)$-category $\cC$. Assume that either $d\geq 3$ or that $(X, \xi)$ is spherophilic. Suppose that $n \geq 2$, and that for every $[x] \in \pi_0 X$,
	\begin{equation*}
		\cZ( T^{d-1}, x_* \theta_1 \times \theta_\text{Lie})
	\end{equation*}
	is invertible. Then $\cZ$ is invertible. 
\end{theorem}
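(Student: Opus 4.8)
The plan is to deduce the $(\infty,n)$-categorical statement from the already-established $(\infty,2)$-categorical results by \emph{looping}, and then to propagate invertibility downward onto the low-dimensional cells that looping does not see. First I would record the decategorification equivalence relating the two bordism categories: passing to endomorphisms of the monoidal unit (the empty $(d-n)$-manifold) decrements the category number while raising the dimension of the objects by one, giving a symmetric monoidal equivalence $\Omega\Bord^{(X,\xi)}_{d;m}\simeq\Bord^{(X,\xi)}_{d;m-1}$ for each $m$; iterating yields $\Omega^{n-2}\Bord^{(X,\xi)}_{d;n}\simeq\Bord^{(X,\xi)}_{d;2}$ as symmetric monoidal $(\infty,2)$-categories. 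Applying $\Omega^{n-2}$ to the target as well (so that $\Omega^{n-2}\cC=\End^{\,n-2}_{\cC}(1)$ is a symmetric monoidal $(\infty,2)$-category) we obtain from $\cZ$ a once-extended theory
\begin{equation*}
	\Omega^{n-2}\cZ \colon \Bord^{(X,\xi)}_{d;2}\simeq\Omega^{n-2}\Bord^{(X,\xi)}_{d;n}\longrightarrow\Omega^{n-2}\cC .
\end{equation*}
Under this equivalence the $(d-1)$-torus, which is a $(d-1)$-dimensional cell and hence an $(n-1)$-morphism of $\Bord^{(X,\xi)}_{d;n}$, is carried to the corresponding $1$-morphism of $\Bord^{(X,\xi)}_{d;2}$, and its value $\cZ(T^{d-1},x_*\theta_1\times\theta_\text{Lie})$ is invertible as an $(n-1)$-morphism of $\cC$ exactly when its image is invertible as a $1$-morphism of $\Omega^{n-2}\cC$. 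Thus $\Omega^{n-2}\cZ$ satisfies the hypotheses of the $(\infty,2)$-results.

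Next I would invoke those results. When $d\geq 3$, Theorem~\ref{thm:MainThm_general} applies to $\Omega^{n-2}\cZ$; when $d=2$ and $(X,\xi)$ is spherophilic, Proposition~\ref{pro:basecase_spherophilic} applies. In either case, after passing to homotopy $2$-categories (where these theorems are stated) and using that invertibility of a symmetric monoidal $(\infty,2)$-functor is detected on its homotopy $2$-category, one concludes that $\Omega^{n-2}\cZ$ is invertible. Unwinding the equivalence, this says that $\cZ$ assigns invertible values to every closed $(d-2)$-manifold, every $(d-1)$-dimensional bordism, and every $d$-dimensional bordism, i.e. to all cells of $\Bord^{(X,\xi)}_{d;n}$ whose boundary data below dimension $d-2$ is trivial.

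It remains to extend invertibility downward to the bottom $n-2$ layers --- the objects (closed $(d-n)$-manifolds) and the $k$-morphisms for $k<n-2$ --- as well as to cells with nontrivial low-dimensional boundary, and I expect this to be the main obstacle. The mechanism is duality and adjunction, exactly as in the closing steps of the proof of Theorem~\ref{thm:main_thm_SO}: every object of $\Bord^{(X,\xi)}_{d;n}$ is fully dualizable and every $k$-morphism admits adjoints, so by the higher analogue of Lemma~\ref{lem:invertible_unit} the invertibility of a $k$-morphism is governed by the invertibility of the unit and counit $(k+1)$-morphisms of an adjunction, and the $\otimes$-invertibility of an object is governed by the invertibility of the evaluation and coevaluation $1$-morphisms of its self-duality. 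Using the folding trick from the proof of Theorem~\ref{thm:main_thm_SO} (tensoring a bordism $W\colon M_1\to M_2$ with a bent cylinder on $\overline{M_1}$ to replace it by a bordism out of the monoidal unit), together with the doubling argument, one reduces the invertibility of an arbitrary cell to that of closed cells built from the monoidal unit, and these are precisely the cells already controlled by $\Omega^{n-2}\cZ$, via Lemma~\ref{lem:symmetric_mon_invert_mor}.

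Carrying this out level by level, from the top layer downward, yields that $\cZ$ is invertible on all objects and all morphisms; equivalently the shear map of its essential image is an equivalence, so $\cZ$ factors through the maximal Picard $\infty$-subcategory of $\cC$ and is therefore invertible. The genuine care needed --- and the part I would write out most carefully --- is the bookkeeping of the tangential structures through each folding and doubling, so that the bordisms produced are honestly composable $(X,\xi)$-bordisms (as in the passage from Lemma~\ref{lem:moving_up} to Lemma~\ref{lem:moving_up_gentangential}), and the verification that the downward reductions always terminate at cells visible to the looped theory.
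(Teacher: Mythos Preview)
Your looping step is correct and gives exactly what you claim: $\Omega^{n-2}\cZ$ is invertible on $\Bord^{(X,\xi)}_{d;2}$, hence $\cZ$ is invertible on all cells lying entirely over the iterated identity of $\emptyset$. The problem is the extension step. The ``folding trick'' you cite from Theorem~\ref{thm:main_thm_SO} tensors $W\colon M_1\to M_2$ with the identity of $\overline{M}_1$ and uses that tensoring with $\cZ(\overline{M}_1)$ \emph{reflects} invertibility; but that requires $\cZ(\overline{M}_1)$ to already be invertible, which is exactly what you are trying to prove. Likewise Lemma~\ref{lem:symmetric_mon_invert_mor} demands invertible sources and targets. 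So as written the downward propagation is circular: pushing a $k$-cell up to its unit/counit $(k{+}1)$-cells leaves the low-dimensional boundary unchanged, and you never reach cells visible to the looped theory without assuming the very invertibility you want.

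The approach is salvageable, but the order must be reversed and the mechanism sharpened. First show that \emph{every} $n$-morphism takes an invertible value, by iterated adjunction equivalences rather than by folding: duality gives $\Hom(M_1,M_2)\simeq\Hom(\emptyset,\overline{M}_1\sqcup M_2)$, and then for $1$-morphisms $f,g\colon\emptyset\to N$ the existence of $f^R$ gives $\Hom_{\Hom(\emptyset,N)}(f,g)\simeq\Hom_{\Hom(\emptyset,\emptyset)}(\mathrm{id}_\emptyset,\,f^R\circ g)$; iterating drops $W$ into $\Omega^{n-2}\Bord^{(X,\xi)}_{d;n}=\Bord^{(X,\xi)}_{d;2}$. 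These equivalences need only dualizability/adjointability (which $\Bord$ has and $\cZ$ preserves), not invertibility. Once all $n$-morphisms are invertible, Lemma~\ref{lem:invertible_unit} genuinely pushes down: every $(n{-}1)$-morphism has an adjoint with invertible unit and counit, hence is invertible; and so on to objects.

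The paper avoids this iterated mate reduction by a different device: it inducts on $n$ using \emph{two} auxiliary theories. Besides $\Omega\cZ\colon\Bord^{(X,\xi)}_{d;n-1}\to\Omega\cC$, it also restricts along the inclusion $\Bord^{(X_{d-1},\xi_{d-1})}_{d-1;n-1}\hookrightarrow\Bord^{(X,\xi)}_{d;n}$ (discarding the top-dimensional bordisms). Invertibility of $\cZ(T^{d-2})$, read off from the first theory, feeds the induction hypothesis for the second (Example~\ref{Ex:restriction_spherophilia} supplies spherophilicity when $d=3$), and invertibility of the restricted theory directly gives all objects through $(n{-}1)$-morphisms. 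Then a \emph{single} mate plus composition with an already-invertible $f\colon\overline{M}_1\sqcup M_2\to\emptyset$ handles the $n$-morphisms. Your repaired route trades this second induction for the iterated mate argument at the top; both are valid, but the paper's version confines all the tangential-structure bookkeeping to the inductive hypothesis rather than to a tower of adjunctions.
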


\begin{proof}
	We will induct on the category number $n$. The base case $n = 2$ is covered by Theorem~\ref{thm:MainThm_general} and Proposition~\ref{pro:basecase_spherophilic}. So we assume that the above theorem holds for all $d$ and all $ k< n$ and we wish to show that it holds for  $k = n$. From our given topological field theory,
	\begin{equation*}
		\cZ: \Bord^{(X, \xi)}_{d;n} \to \cC
	\end{equation*}
we can extract two additional field theories.

First, out of any symmetric monoidal $(\infty,n)$-category, we can obtain a symmetric monoidal $(\infty,n-1)$-category by passing to the endomorphisms of the unit object. This is functorial and so the above functor induces a functor (also dentoed $\cZ$):
\begin{equation*}
	\cZ: \Hom_{\Bord^{(X, \xi)}_{d;n}}( \emptyset, \emptyset) \to \Hom_\cC(1,1).
\end{equation*}
The source $(\infty,n-1)$-category is precisely $\Bord^{(X, \xi)}_{d;n-1}$. Thus by induction, this restricted field theory is invertible. For example $\cZ(T^{d-2}, x_* \theta_1 \times \theta_\text{Lie})$ is invertible. 

Next, there is a functor
\begin{equation*}
	\Bord^{(X_{d-1}, \xi_{d-1})}_{d-1;n-1} \to \Bord^{(X, \xi)}_{d;n}.
\end{equation*}	
Here $(X_{d-1}, \xi_{d-1})$	is the pullback of $(X, \xi)$ to $BO(d-1)$ and defines a tangential structure for $(d-1)$-manifolds. The difference between $\Bord^{(X_{d-1}, \xi_{d-1})}_{d-1;n-1} $ and $\Bord^{(X, \xi)}_{d;n}$ is that the former includes manifolds only up to dimension $(d-1)$. The above functor simply includes the objects, morphisms, etc of $\Bord^{(X_{d-1}, \xi_{d-1})}_{d-1;n-1} $ into $\Bord^{(X, \xi)}_{d;n}$. By precomposing with this functor we get a field theory, which we again denote $\cZ$ 
\begin{equation*}
	\cZ: \Bord^{(X_{d-1}, \xi_{d-1})}_{d-1;n-1} \to \cC
\end{equation*}	
(which lands in the maximal sub-$(\infty, n-1)$-category of $\cC$). As we have seen $\cZ(T^{d-2}, x_* \theta_1 \times \theta_\text{Lie})$ is invertible, and if $d=3$, we have that $(X_2, \xi_2)$ is spherophilic by Example~\ref{Ex:restriction_spherophilia}. Hence by induction this restricted field theory is also invertible. In particular every object, 1-morphisms, etc, up to $(n-1)$-morphism of $\Bord^{(X, \xi)}_{d;n}$ takes an invertible value under $\cZ$.  

It follows that the only morphisms of $\Bord^{(X, \xi)}_{d;n}$ which could possibly take non-invertible values under $\cZ$ are the $(n-1)$-morphisms of the hom $(\infty, n-1)$-categories
\begin{equation*}
	\Hom_{\Bord^{(X, \xi)}_{d;n}}(M_1,M_2)
\end{equation*}
where $M_1, M_2$ are objects of $\Bord^{(X, \xi)}_{d;n}$, Moreover these $(n-1)$-morphisms also take invertible values if $M_1 = M_2 = \emptyset$.

Since $M_1$ is dualizable with dual $\overline{M}_1$, we have an equivalence of hom $(\infty, n-1)$-categories:
\begin{equation*}
	\Hom_{\Bord^{(X, \xi)}_{d;n}}(M_1,M_2) \simeq \Hom_{\Bord^{(X, \xi)}_{d;n}}(\emptyset,\overline{M}_1 \sqcup M_2).
\end{equation*}
This equivalence comes about by composing with the coevaluation of the duality between $M_1$ and $\overline{M}_1$. This equivalence is sometimes called the \emph{calculus of mates}. 

The functor $\cZ$, like any functor, preserves duality structures and hence the $(n-1)$-morphisms of $\Hom_{\Bord^{(X, \xi)}_{d;n}}(M_1,M_2)$ take invertible values under $\cZ$ precisely if the corresponding $(n-1)$-morphisms of $\Hom_{\Bord^{(X, \xi)}_{d;n}}(\emptyset,\overline{M}_1 \sqcup M_2)$ take invertible values under $\cZ$. Thus it is sufficient to show that the $(n-1)$-morphisms of $\Hom_{\Bord^{(X, \xi)}_{d;n}}(\emptyset,\overline{M}_1 \sqcup M_2)$ are invertible after applying $\cZ$. 

Suppose that we are given a morphism $f: \overline{M}_1 \sqcup M_2 \to \emptyset$ in $\Bord^{(X, \xi)}_{d;n}$. Composition with $f$ induces a functor:
\begin{equation*}
	f \circ (-): \Hom_{\Bord^{(X, \xi)}_{d;n}}(\emptyset,\overline{M}_1 \sqcup M_2) \to \Hom_{\Bord^{(X, \xi)}_{d;n}}(\emptyset, \emptyset).
\end{equation*}
We know that $\cZ(f)$ is invertible, and so composition with $\cZ(f)$ is an equivalence. Since the $(n-1)$-morphisms of $\Hom_{\Bord^{(X, \xi)}_{d;n}}(\emptyset, \emptyset)$ take invertible values under $\cZ$ it follows that the same is true of the $(n-1)$-morphisms of $\Hom_{\Bord^{(X, \xi)}_{d;n}}(\emptyset,\overline{M}_1 \sqcup M_2)$. Which is exactly what we set out to show. 

So our theorem will be proven if we can show that a morphism $f:\overline{M}_1 \sqcup M_2 \to \emptyset$ exists. If $\Hom_{\Bord^{(X, \xi)}_{d;n}}(\emptyset,\overline{M}_1 \sqcup M_2)$ is empty, then we are already done, vacuously. If $\Hom_{\Bord^{(X, \xi)}_{d;n}}(\emptyset,\overline{M}_1 \sqcup M_2)$ is non-empty, then there exists at least one morphism $g: \emptyset \overline{M}_1 \sqcup M_2$. We may obtain the desired $f$ as the, say, left-adjoint of $g$, $f = g^L$. 
\end{proof}

\bibliographystyle{plain}
\bibliography{invert_tori-bibliography}
\end{document}